\documentclass[12pt]{article}

\usepackage{fontspec, xunicode, xltxtra}  
\usepackage{mathtools}
\usepackage{amsmath,diagbox}
\usepackage{indentfirst}
\usepackage{mathrsfs}
\usepackage{amsfonts}
\usepackage{arydshln}
\usepackage{enumerate}
\usepackage{setspace}
\usepackage{amssymb,amsthm,cases}
\usepackage{amsbsy,pifont}
\usepackage{latexsym,diagbox,tabularx}
\usepackage{amsmath,amscd,bbm,multirow}
\usepackage{graphicx,epsfig,extarrows,dsfont,mathtools}
\usepackage[final,allcolors=blue,colorlinks=true]{hyperref}
\usepackage[normalem]{ulem}
\input xy
\usepackage{caption,subcaption} 
\usepackage{tikz}
\usepackage{tikz-cd}
\usetikzlibrary{decorations.pathreplacing,decorations.markings}
\usetikzlibrary{cd}
\usetikzlibrary{shapes.geometric}
\usetikzlibrary{arrows,arrows.meta}
\usetikzlibrary{graphs,positioning}
\usetikzlibrary{matrix,decorations.pathmorphing}
\usetikzlibrary{math,calc,intersections,through,angles,arrows.meta,shapes.geometric,shadows,quotes,spy,datavisualization,datavisualization.formats.functions,plotmarks}
\tikzset{every picture/.style={samples=300,smooth,line join=round,thick,>=stealth}}
\usepackage{enumerate}
\usepackage{fancyhdr}
\usepackage[sectionbib]{chapterbib}

\tikzset{
	on each segment/.style={
		decorate,
		decoration={
			show path construction,
			moveto code={},
			lineto code={
				\path [#1]
				(\tikzinputsegmentfirst) -- (\tikzinputsegmentlast);
			},
			curveto code={
				\path [#1] (\tikzinputsegmentfirst)
				.. controls
				(\tikzinputsegmentsupporta) and (\tikzinputsegmentsupportb)
				..
				(\tikzinputsegmentlast);
			},
			closepath code={
				\path [#1]
				(\tikzinputsegmentfirst) -- (\tikzinputsegmentlast);
			},
		},
	},
	mid arrow/.style={postaction={decorate,decoration={
				markings,
				mark=at position .5 with {\arrow[#1]{stealth}}
	}}},
}

\numberwithin{equation}{section}

\theoremstyle{plain}
\newtheorem{theorem}{Theorem}[section]

\newtheorem{corollary}[theorem]{Corollary}

\newtheorem{lemma}[theorem]{Lemma}

\theoremstyle{definition}
\newtheorem{definition}[theorem]{Definition}

\newtheorem{remark}[theorem]{Remark}

\newtheorem{notation}[theorem]{Notation}

\def\XXint#1#2#3{{\setbox0=\hbox{$#1{#2#3}{\int}$}
		\vcenter{\hbox{$#2#3$}}\kern-.5\wd0}}

\DeclareMathSymbol{\subseteq}{\mathrel}{symbols}{"12}
\DeclareMathSymbol{\supseteq}{\mathrel}{symbols}{"13} 
\DeclareMathSymbol{\subsetneq}{\mathrel}{AMSb}{"28}                  \DeclareMathSymbol{\supsetneq}{\mathrel}{AMSb}{"29}    

\DeclareMathSymbol{\nsubseteq}{\mathrel}{AMSb}{"2A}                  

\DeclareMathSymbol{\nsupseteq}{\mathrel}{AMSb}{"2B}

\DeclareMathDelimiter{\langle}{\mathop}{symbols}{"68}{largesymbols}{"0A}
\DeclareMathDelimiter{\rangle}{\mathclose}{symbols}{"69}{largesymbols}{"0B}

\DeclareSymbolFont{txfontsA}{U}{txmia}{m}{it}
\SetSymbolFont{txfontsA}{bold}{U}{txmia}{bx}{it}
\DeclareFontSubstitution{U}{txmia}{m}{it}
\DeclareMathSymbol{\upalpha}{\mathord}{txfontsA}{"0B}
\DeclareMathSymbol{\upbeta}{\mathord}{txfontsA}{"0C}
\DeclareMathSymbol{\upgamma}{\mathord}{txfontsA}{"0D}
\DeclareMathSymbol{\updelta}{\mathord}{txfontsA}{"0E}
\DeclareMathSymbol{\upepsilon}{\mathord}{txfontsA}{"0F}
\DeclareMathSymbol{\upzeta}{\mathord}{txfontsA}{"10}
\DeclareMathSymbol{\upeta}{\mathord}{txfontsA}{"11}
\DeclareMathSymbol{\uptheta}{\mathord}{txfontsA}{"12}
\DeclareMathSymbol{\upiota}{\mathord}{txfontsA}{"13}
\DeclareMathSymbol{\upkappa}{\mathord}{txfontsA}{"14}
\DeclareMathSymbol{\uplambda}{\mathord}{txfontsA}{"15}
\DeclareMathSymbol{\upmu}{\mathord}{txfontsA}{"16}
\DeclareMathSymbol{\upnu}{\mathord}{txfontsA}{"17}
\DeclareMathSymbol{\upxi}{\mathord}{txfontsA}{"18}
\DeclareMathSymbol{\uppi}{\mathord}{txfontsA}{"19}
\DeclareMathSymbol{\uprho}{\mathord}{txfontsA}{"1A}
\DeclareMathSymbol{\upsigma}{\mathord}{txfontsA}{"1B}
\DeclareMathSymbol{\uptau}{\mathord}{txfontsA}{"1C}
\DeclareMathSymbol{\upupsilon}{\mathord}{txfontsA}{"1D}
\DeclareMathSymbol{\upphi}{\mathord}{txfontsA}{"1E}
\DeclareMathSymbol{\upchi}{\mathord}{txfontsA}{"1F}
\DeclareMathSymbol{\uppsi}{\mathord}{txfontsA}{"20}
\DeclareMathSymbol{\upomega}{\mathord}{txfontsA}{"21}
\DeclareMathSymbol{\upvarepsilon}{\mathord}{txfontsA}{"22}
\DeclareMathSymbol{\upvartheta}{\mathord}{txfontsA}{"23}
\DeclareMathSymbol{\upvarpi}{\mathord}{txfontsA}{"24}
\DeclareMathSymbol{\upvarrho}{\mathord}{txfontsA}{"25}
\DeclareMathSymbol{\upvarsigma}{\mathord}{txfontsA}{"26}
\DeclareMathSymbol{\upvarphi}{\mathord}{txfontsA}{"27}                   

\DeclareSymbolFont{ugmL}{OMX}{mdugm}{m}{n}
\SetSymbolFont{ugmL}{bold}{OMX}{mdugm}{b}{n}
\DeclareMathAccent{\wideparen}{\mathord}{ugmL}{"F3}

\def\pt{\partial}

\def\ra{\rightarrow}

\def\s{\subseteq}

\def\e{\epsilon}

\def\ol{\overline}

\def\bf{\textbf}
\def\pt{\partial}

\def\Om{\Omega}
\def\la{\lambda}
\def\al{\alpha}
\def\be{\beta}
\def\de{\delta}

\def\Ga{\Gamma}
\def\La{\Lambda}

\def\ts{\times}
\def\ka{\kappa}

\def\iy{\infty}

\def\ots{\otimes}
\def\f{\frac}

\def\df{\mathrm d}

\def\wh{\widehat}

\def\hra{\hookrightarrow}

\def\dra{\downarrow}
\def\mcL{\mathcal{L}}

\def\mcA{\mathcal{A}}

	\DeclareMathOperator{\Div}{div}

	\DeclareMathOperator{\dist}{dist}

	\DeclareMathOperator{\supp}{{supp}}

	\newcommand{\R}{\mathbb R}
	\newcommand{\N}{\mathbb N}

\begin{document} 
	
	
	\title{Shape Design for    Degenerate Hyperbolic  Equation with Degenerate  Boundary and Its Application to Observability}
	\author{Dong-Hui Yang, Yuanzhi Zhou\footnote{\small
			The corresponding author. Email: donghyang@outlook.com}\\
			{\it School of Mathematics and Statistics, Central South University}\\
		{\it Changsha 410075, P.R.China}\\}

	\maketitle{}
	\thispagestyle{empty}
	\thispagestyle{empty}
	
	\begin{abstract}
		 In this paper, we study the observability and controllability of a class of degenerate hyperbolic equations with a control region intersecting the degenerate set. Unlike the existing results that mainly deal with control regions separated from the degeneracy, we consider the case where the control region reaches the degenerate part. To handle the difficulty caused by the degeneracy, we introduce a shape-design-based approximation method  based on shape design by approximating the degenerate equation with a family of uniformly hyperbolic equations. The proof relies on the spectral approximation of the associated operators, precise estimates for weak solutions, and the multiplier method. We first establish observability inequalities for the approximating equations with constants independent of the approximation parameter. Then, by passing to the degenerate limit, we obtain the observability inequality for the original degenerate equation.

		 \vspace{0.3cm}
		 
		 \noindent {\bf {Keywords:}}  Degenerate partial differential equations, shape design.
		 
		 \vspace{0.3cm}
		 
		 \noindent {\bf {AMS subject classifications (2010):}}~35J70, 35K65, 49Q10, 93B05.
	\end{abstract}
	
	\section{Introduction}

	Controllability for uniformly parabolic and hyperbolic equations, as well as for stochastic differential equations, has been extensively studied; see, for example, \cite{Komornik,Lasiecka,Lasiecka1,Lions,Lu,Weiss,Yang5,Yao,Zuazua}. A fundamental tool in the study of controllability for deterministic systems is the Hilbert Uniqueness Method (HUM), introduced by J.-L. Lions, which establishes a connection between exact controllability of a system and an observability inequality for its corresponding adjoint system. Consequently, the study of controllability can often be reduced to establishing suitable observability estimates, \cite{Lions,Yang1}.
	
	Controllability for degenerate parabolic and hyperbolic equations has also been extensively studied; see, for example, \cite{Alabau,Alabau1,Akil,Bai,Buffe,Cannarsa,Cannarsa1,Fragnelli,Gueye,Yang,Yang4,Zhang}. Most of these works concern one-dimensional problems, with the notable exception of the higher-dimensional case considered in \cite{Yang4}.
	
	The controllability of degenerate hyperbolic equations presents several additional difficulties. First, one needs to identify a suitable multiplier. For uniformly hyperbolic equations, the diffusion is directionally nondegenerate, and standard multipliers can therefore be chosen according to the geometry of the problem. In the degenerate setting, however, the diffusion coefficient may vanish in certain directions or on part of the boundary, and the choice of a suitable multiplier becomes substantially more delicate. Second, the integration-by-parts arguments require additional care. In the degenerate setting, the regularity of weak solutions may not be sufficient to justify the usual Green formula directly. Consequently, appropriate integration-by-parts formulas and boundary terms need to be established in the relevant weighted Sobolev spaces.
	
	In \cite{Yang4}, the authors used a cut-off method to establish controllability when the control region covers the degenerate part. This approach is effective for a broad class of degenerate equations, but it is not directly applicable when the control region is separated from the degenerate part. To overcome this limitation, the authors of \cite{Yang6} introduced an approximation method based on regularized coefficients and established controllability for a degenerate hyperbolic equation with a single-point degeneracy. In \cite{Yang3}, the authors developed a shape design method to establish controllability for several classes of degenerate hyperbolic equations. In these works, the control region is required to be located away from the degenerate part, while the underlying domain is assumed to satisfy certain geometric properties. These geometric assumptions, however, arise naturally from the control problem and are not artificially imposed. In \cite{Yang2}, the authors employed a Carleman estimate to establish an observability inequality for a two-dimensional degenerate hyperbolic equation. In the one-dimensional setting, this is a natural and effective approach. However, for more general degenerate hyperbolic equations, especially in higher dimensions, the Carleman approach becomes considerably more delicate. The degeneracy of the principal part affects the construction of the Carleman weight, and the standard choice
	$
	\psi(x,t)=|x-x_0|^2-\beta |t-t_0|^2+\beta_0
	$
	may no longer be suitable for the degenerate operator. Consequently, obtaining a Carleman estimate adapted to the degeneracy and geometry of the equation is a nontrivial issue. 
	
	Shape design and shape optimization methods have been widely used in various areas of mathematics, industry, and engineering; see, for example, \cite{Buttazzo,Chenais,Greco,Guo2,Guo1,Guo,He,Henrot,Privat,Tiba,Wang}. In particular, we applied the shape design method to the controllability problem for a one-dimensional parabolic equation in \cite{Guo}, and to several classes of degenerate hyperbolic equations in \cite{Yang3}.
	
	In the present work, we aim to develop an approximation-based approach to establish controllability for a degenerate hyperbolic equation when the control region  intersecting the degenerate set. More precisely, we first consider a family of uniformly nondegenerate approximating equations and establish observability inequalities whose constants are independent of the approximation parameter. We then pass to the degenerate limit and obtain the corresponding observability inequality for the original equation. Finally, the controllability result follows from the HUM framework.

	In this paper, we study the following degenerate wave equation:
	\begin{equation} \label{01.08.1}
	\begin{cases}
		\partial_{tt}y-\Div(A\nabla y)=0, & \text{in } Q,\\
		y=0, & \text{on } \Sigma,\\
		y(0)=y^0,  \partial_t y(0)=y^1, & \text{in } \Omega,
	\end{cases}
	\end{equation}
	where $
	\Omega=(-1,1)\times(0,1)$, 
	and $
	A=\operatorname{diag}(1,x_2^\alpha), \alpha\in(0,1) $ 
	is a fixed constant. Let $Q=\Omega\times(0,T)$ for some $T>0$, and let $\Sigma=\partial\Omega\times(0,T)$. The initial data satisfy
	$
	y^0\in H_0^1(\Omega;w),  y^1\in L^2(\Omega)$, 
	where $w=x_2^\alpha$ and the space  $H_0^1(\Om;w)$ will be introduced in Section \ref{S2}. We denote 
	\begin{equation*} 
	\mathcal{A}u=-\Div(A\nabla u),
	\end{equation*} 
	and define the conormal derivative by
	\begin{equation*} 
	\frac{\partial y}{\partial\nu_A}=A\nabla y\cdot\nu,
	\end{equation*} 
	where $\nu$ denotes the outward normal vector on $\partial\Omega$. Moreover, we introduce the notation
	\begin{equation*} 
	\Gamma_1^{-1}={-1}\times(0,1),\quad
	\Gamma_1^{1}={1}\times(0,1),
	\end{equation*} 
	and
	\begin{equation*} 
	\Gamma_2^i=(-1,1)\times{i},\quad i=0,1.
	\end{equation*} 
	
	The paper is organized as follows. In Section \ref{S2}, we introduce the solution spaces, namely the weighted Sobolev spaces, and establish the spectral properties of the operator $\mathcal{A}$. Then, by using the separation of variables method, we construct weak solutions of \eqref{01.08.1}. In Section \ref{S3}, we introduce the approximation method based on shape design. More precisely, we approximate the degenerate equation \eqref{01.08.1} by a family of uniformly hyperbolic equations \eqref{08.04.7}. This requires spectral approximation results, which allow us to obtain the convergence of the corresponding weak solutions. In Section \ref{S4}, after establishing an observability inequality for the approximating equation \eqref{08.04.7}, we pass to the limit and obtain the observability inequality for the original degenerate equation \eqref{01.08.1}.
	
	The main results of this paper are Theorem \ref{08.05.T1}, Corollary \ref{08.07.C2}, and Theorems \ref{08.06.T1} and \ref{08.06.T2}.

\section{Preliminary}\label{S2}

In this section, we first introduce the weighted Sobolev spaces (i.e., solution spaces) associated with equation \eqref{01.08.1}. We then present the spectral properties of the partial differential operator $\mcA$. Finally, we introduce the method of separation of variables.

\subsection{Solution spaces}

We first introduce the weighted Sobolev spaces associated with equation \eqref{01.08.1}. Define
\begin{equation*}
	H^1(\Omega;w)
	=
	\left\{
	u\in L^2(\Omega):
	\int_\Omega \nabla u\cdot A\nabla u\,\mathrm{d}x<+\infty
	\right\}.
\end{equation*}
The inner product and norm on $H^1(\Omega;w)$ are defined by
\begin{equation*}
	(u,v)_{H^1(\Omega;w)}
	=
	\int_\Omega
	\left(
	uv+\nabla u\cdot A\nabla v
	\right)\,\mathrm{d}x,
	\qquad
	\|u\|_{H^1(\Omega;w)}
	=
	(u,u)_{H^1(\Omega;w)}^\f{1}{2},
\end{equation*}
respectively.

Define
\begin{equation*}
	H_0^1(\Omega;w)
	=
	\overline{C_0^\infty(\Omega)}^{\,H^1(\Omega;w)},
\end{equation*}
and let
\begin{equation*}
	H^{-1}(\Omega;w)
\end{equation*}
denote the dual space of $H_0^1(\Omega;w)$ with respect to the pivot space $L^2(\Omega)$.

Next, define
\begin{equation*}
	H^2(\Omega;w)
	=
	\left\{
	u\in L^2(\Omega):
	\mathcal A u\in L^2(\Omega)
	\right\}.
\end{equation*}
The inner product and norm on $H^2(\Omega;w)$ are defined by
\begin{equation*}
	(u,v)_{H^2(\Omega;w)}
	=
	(u,v)_{H^1(\Omega;w)}
	+
	(\mathcal A u,\mathcal A v)_{L^2(\Omega)},
	\qquad
	\|u\|_{H^2(\Omega;w)}
	=
	(u,u)_{H^2(\Omega;w)}^\f{1}{2},
\end{equation*}
respectively.

Finally, define the domain of $\mathcal A$ by
\begin{equation*}
	D(\mathcal A)
	=
	H_0^1(\Omega;w)\cap H^2(\Omega;w).
\end{equation*}

The following result is well known (see, e.g., \cite{Fabes,GC,Heinonen}).

\begin{lemma}\label{08.04.L1}
	The spaces 
	\begin{equation*}
		H_0^1(\Om;w),\quad H^1(\Om;w), \mbox{ and } H^2(\Om;w) 
	\end{equation*}
	are Hilbert spaces. 
\end{lemma}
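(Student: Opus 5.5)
The plan is to check, space by space, that the given inner products are genuine inner products and that the induced norms are complete. Everything reduces to completeness of $L^2$ spaces, using that the weight $w=x_2^\alpha$ is bounded above on $\Omega$ and positive a.e. The derivatives in the definitions are understood in the distributional sense.

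\textbf{The space $H^1(\Omega;w)$.} Bilinearity and symmetry of $(\cdot,\cdot)_{H^1(\Omega;w)}$ are clear, since $A$ is symmetric. Positivity holds because $(u,u)_{H^1(\Omega;w)}\ge\|u\|_{L^2}^2$.

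For completeness, take a Cauchy sequence $(u_n)$ in $H^1(\Omega;w)$. Then $u_n$ is Cauchy in $L^2(\Omega)$, $\partial_{x_1}u_n$ is Cauchy in $L^2(\Omega)$, and $x_2^{\alpha/2}\partial_{x_2}u_n$ is Cauchy in $L^2(\Omega)$. So there are limits
\begin{equation*}
u_n\to u,\qquad \partial_{x_1}u_n\to g_1,\qquad x_2^{\alpha/2}\partial_{x_2}u_n\to h \quad\text{in } L^2(\Omega).
\end{equation*}
Set $g_2=x_2^{-\alpha/2}h$. Since $x_2^{-\alpha/2}$ is bounded on each $\Omega_\delta=\Omega\cap\{x_2>\delta\}$, we get $g_2\in L^2_{loc}(\Omega)$, and $\partial_{x_2}u_n\to g_2$ in $L^2(\Omega_\delta)$. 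Testing against $\varphi\in C_0^\infty(\Omega)$, whose support lies in some $\Omega_\delta$, and passing to the limit in $\int u_n\partial_i\varphi=-\int\partial_iu_n\,\varphi$ gives $\nabla u=(g_1,g_2)$ in $\mathcal D'(\Omega)$. Hence $\int\nabla u\cdot A\nabla u<\infty$, and $u_n\to u$ in the $H^1(\Omega;w)$ norm.

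\textbf{The space $H_0^1(\Omega;w)$.} It is a closed subspace of the Hilbert space $H^1(\Omega;w)$ by definition, so it is itself a Hilbert space.

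\textbf{The space $H^2(\Omega;w)$.} The definition only writes $u\in L^2$ and $\mathcal Au\in L^2$. The intended meaning is $u\in H^1(\Omega;w)$ with $\mathcal Au=-\Div(A\nabla u)\in L^2(\Omega)$ in the distributional sense, which is what the chosen norm indicates. Positivity of the inner product is immediate from the $H^1(\Omega;w)$ part.

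For completeness, let $(u_n)$ be Cauchy in $H^2(\Omega;w)$. Then $u_n\to u$ in $H^1(\Omega;w)$ by the previous step, and $\mathcal Au_n\to f$ in $L^2(\Omega)$. For $\varphi\in C_0^\infty(\Omega)$ we have $\langle\mathcal Au_n,\varphi\rangle=\int A\nabla u_n\cdot\nabla\varphi$. Because $A\nabla u_n\to A\nabla u$ in $L^2_{loc}$ (and in fact $A^{1/2}\nabla u_n\to A^{1/2}\nabla u$ in $L^2$), this converges to $\int A\nabla u\cdot\nabla\varphi$. Hence $\mathcal Au=f\in L^2$, and $u_n\to u$ in $H^2(\Omega;w)$.

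\textbf{Main obstacle.} The only real subtlety is identifying the weak derivative near the degenerate edge $\{x_2=0\}$, where $x_2^{-\alpha/2}$ is unbounded. The local argument above handles it, since distributions are tested only against compactly supported functions. Alternatively, one can note that $x_2^{-\alpha}\in L^1$ because $\alpha<1$, so $w$ is an $A_2$-type weight and $g_2\in L^1_{loc}$ up to the boundary. The paper cites \cite{Fabes,GC,Heinonen} for these standard facts.
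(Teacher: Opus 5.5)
Your proof is correct, but it takes a different route from the paper. The paper gives no argument for Lemma \ref{08.04.L1}: it declares the result well known and cites \cite{Fabes,GC,Heinonen}. That is, it appeals to the general theory of weighted Sobolev spaces with $A_2$ (or $p$-admissible) weights. In that theory, completeness rests on $w^{-1}\in L^1_{\rm loc}$, which guarantees that weighted-$L^2$ limits of gradients are genuine $L^1_{\rm loc}$ weak derivatives even when the weight vanishes inside the domain.

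You instead reduce everything to completeness of $L^2(\Omega)$. You identify the limiting weak derivatives by testing only against $C_0^\infty(\Omega)$, using that the degeneracy lies on the boundary $\{x_2=0\}$, so $x_2^{-\alpha/2}$ is bounded on the support of every test function.

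Your route buys three things:
\begin{enumerate}
\item It handles the anisotropic matrix $A=\operatorname{diag}(1,x_2^\alpha)$ directly. The cited references are written for isotropic energies $\int w|\nabla u|^2$, so the citation needs some translation.
\item It does not use $\alpha<1$ at all. Any $\alpha\ge 0$ works, since the weight is bounded below on compact subsets of $\Omega$.
\item It makes explicit that $H^2(\Omega;w)$ must be read as $\{u\in H^1(\Omega;w):\mathcal A u\in L^2(\Omega)\}$ for its inner product to be finite. This matches how the paper itself later defines $H^2(\Lambda_2;w)$.
\end{enumerate}
The citation buys brevity and places the spaces in a framework where the $A_2$ property of $x_2^\alpha$ for $\alpha<1$ matters. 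On this domain, however, that property is relevant for finer properties, such as the global $L^1$ integrability of gradients up to $\{x_2=0\}$ noted in your closing remark, rather than for completeness.
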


The following lemma is the weighted Hardy inequality that will be used throughout this paper. 

\begin{lemma}\label{08.04.L2}
	There exists a positive constant $C$, depending only on $\al$, such that 
	\begin{equation*}
		\int_\Om x_2^{\al-2}u^2\df x\leq C\int_\Om x_2^\al (\pt_{x_2}u)^2\df x
	\end{equation*}
	for all $u\in H_0^1(\Om;w)$. Moreover, we have 
	\begin{equation*}
		\int_\Om x_2^{\al-2}u^2\df x\leq C\int_\Om \nabla u\cdot A\nabla u\df x. 
	\end{equation*}
\end{lemma}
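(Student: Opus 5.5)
We prove the inequality first for $u\in C_0^\infty(\Om)$ and then extend it to all of $H_0^1(\Om;w)$ by density, since $H_0^1(\Om;w)$ is by definition the closure of $C_0^\infty(\Om)$ in the $H^1(\Om;w)$ norm. The estimate is one-dimensional in $x_2$: fix $x_1\in(-1,1)$ and set $f(s)=u(x_1,s)$ for $s\in(0,1)$. Then $f$ is smooth and vanishes near $s=0$ and near $s=1$. We need the one-dimensional weighted Hardy inequality
\begin{equation*}
\int_0^1 s^{\al-2}f(s)^2\,\df s\le C_\al\int_0^1 s^\al f'(s)^2\,\df s .
\end{equation*}
Integrating this in $x_1$ over $(-1,1)$ gives the first claim. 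The second claim then follows at once, because $\nabla u\cdot A\nabla u=(\pt_{x_1}u)^2+x_2^\al(\pt_{x_2}u)^2\ge x_2^\al(\pt_{x_2}u)^2$.

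For the one-dimensional inequality we use the standard integration-by-parts argument. Since $\al-1\neq0$ (because $\al\in(0,1)$), we can write $s^{\al-2}=\f{1}{\al-1}\f{\df}{\df s}s^{\al-1}$. This gives
\begin{equation*}
\int_0^1 s^{\al-2}f^2\,\df s=\Big[\f{s^{\al-1}f^2}{\al-1}\Big]_0^1-\f{2}{\al-1}\int_0^1 s^{\al-1}ff'\,\df s .
\end{equation*}
The boundary terms vanish because $f$ has compact support in $(0,1)$. Next we apply Cauchy--Schwarz, splitting $s^{\al-1}=s^{(\al-2)/2}\,s^{\al/2}$:
\begin{equation*}
\int_0^1 s^{\al-2}f^2\le \f{2}{1-\al}\Big(\int_0^1 s^{\al-2}f^2\Big)^{1/2}\Big(\int_0^1 s^\al f'^2\Big)^{1/2}.
\end{equation*}
The left-hand side is finite since $f$ vanishes near $0$, so we may divide by its square root. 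This yields the inequality with $C_\al=4/(1-\al)^2$.

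The density step is where some care is needed. Take $u\in H_0^1(\Om;w)$ and a sequence $u_n\in C_0^\infty(\Om)$ with $u_n\to u$ in $H^1(\Om;w)$. Then $x_2^{\al/2}\pt_{x_2}u_n\to x_2^{\al/2}\pt_{x_2}u$ in $L^2(\Om)$, so the right-hand sides converge. By the inequality applied to $u_n-u_m$, the sequence $x_2^{(\al-2)/2}u_n$ is Cauchy in $L^2(\Om)$, and hence it has a limit $v$. On the other hand, $u_n\to u$ in $L^2(\Om)$, so along a subsequence $u_n\to u$ a.e. Therefore $v=x_2^{(\al-2)/2}u$ a.e., and passing to the limit (or just using Fatou's lemma on the left) gives the inequality for $u$. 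The constant $C=4/(1-\al)^2$ depends only on $\al$. We expect no real obstacle beyond correctly identifying this limit, which is handled by the a.e. convergence along a subsequence.
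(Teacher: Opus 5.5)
Your proof is correct, and it reaches the same constant $4/(1-\alpha)^2$ as the paper, but by a different route. The paper does not integrate by parts. It writes $u(x_1,x_2)=\int_0^{x_2}\partial_{x_2}u(x_1,s)\,\mathrm{d}s$ and applies Cauchy--Schwarz with the splitting $s^{\beta}\cdot s^{-\beta}$ for an auxiliary exponent $\beta\in(\alpha,1)$. It then exchanges the order of integration and bounds $\int_s^1 x_2^{\alpha-\beta-1}\,\mathrm{d}x_2\le s^{\alpha-\beta}/(\beta-\alpha)$. This gives the constant $1/((\beta-\alpha)(1-\beta))$, which is optimized at $\beta=(1+\alpha)/2$.

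Your integration-by-parts and absorption argument is shorter and needs no free parameter. However, it requires dividing by $\bigl(\int_0^1 s^{\alpha-2}f^2\,\mathrm{d}s\bigr)^{1/2}$, so the left-hand side must be known to be finite beforehand. You correctly secure this from the compact support of $f$. The paper's argument is a direct bound with no absorption step and uses only $u(x_1,0)=0$. It would therefore apply verbatim to any function absolutely continuous in $x_2$ and vanishing at $x_2=0$, without a priori finiteness of the weighted integral. Incidentally, your boundary term at $s=1$, namely $f(1)^2/(\alpha-1)$, has a favorable sign, so vanishing at $s=1$ is not really needed either.

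Finally, your density step is fully spelled out: Cauchy in the weighted $L^2$ space, identification of the limit via an a.e. convergent subsequence, or simply Fatou's lemma. The paper only says ``by a density argument,'' so your proposal makes that implicit step explicit.
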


\begin{proof}
	By a density argument, it suffices to prove the result for
	\begin{equation*}
	u\in C_0^\infty(\Omega).
	\end{equation*}
	
	Indeed, for each $\be\in (\al,1)$, we have 
	\begin{equation*}
		\begin{split}
			\int_\Om x_2^{\al-2}u^2\df x
			&=\int_{-1}^1\int_0^1 x_2^{\al-2}u^2(x_1,x_2)\df x_2\df x^1=\int_{-1}^1\int_0^1 x_2^{\al-2}\left(\int_0^{x_2} \pt_{x_2}u(x_1,s)\df s\right)^2\df x_2\df x^1\\
			&\leq \int_{-1}^1\int_0^1x_2^{\al-2}\left(\int_0^{x_2} s^\be |\pt_{x_2}u(x_1,s)|^2\df s\right)\left(\int_0^{x_2} s^{-\be}\df s\right)\df x_2\df x_1\\
			&=\f{1}{1-\be}\int_{-1}^1\int_0^1 \int_0^{x_2}  x_2^{\al-\be-1}s^\be |\pt_{x_2}u(x_1,s)|^2\df s\df x_2\df x_1\\
			&=\f{1}{1-\be}\int_{-1}^1\int_0^1\int_{s}^1x_2^{\al-\be-1}s^\be |\pt_{x_2}u(x_1,s)|^2\df x_2\df s\df x_1\\
			&\leq \f{1}{(\be-\al)(1-\be)}\int_{-1}^1\int_0^1 s^\al |\pt_{x_2}u(x_1,s)|^2\df s\df x_1
		\end{split}
	\end{equation*}
	by 
	\begin{equation*}
		\begin{split}
			\int_s^1x_2^{\al-\be-1}\df x_2=\f{1}{\al-\be}\left(1-{s^{\al-\be}}\right)=\f{1}{\be-\al}\left({s^{\al-\be}}-1\right)\leq \f{1}{\be-\al}s^{\al-\be}, 
		\end{split}
	\end{equation*}
	and then 
	\begin{equation*}
		\int_\Om x_2^{\al-2}u^2\df x\leq \f{4}{(1-\al)^2}\int_\Om x_2^\al |\pt_{x_2}u|^2\df x 
	\end{equation*}
	by taking $\be=\f{1+\al}{2}$. This completes the proof.
\end{proof}

\begin{remark}\label{08.04.R1}
	From Lemma \ref{08.04.L2}, we get 
	\begin{equation*}
		\begin{split}
			\int_\Om u^2\df x=\int_\Om x_2^{2-\al}x_2^{\al-2}u^2\df x\leq \int_\Om x_2^{\al-2}u^2\df x\leq C\int_\Om \nabla u\cdot A\nabla u\df x, 
		\end{split}
	\end{equation*}
	hence, the norm 
	\begin{equation*}
		\|u\|_{H_0^1(\Om;w)}=\left(\int_\Om \nabla u\cdot A\nabla u\df x\right)^\f{1}{2}
	\end{equation*}
	defines an equivalent norm on  $H_0^1(\Om;w)$. Throughout this paper,
	we use this equivalent norm on  $H_0^1(\Om;w)$.
\end{remark}

\begin{lemma}\label{08.04.L3}
	The embedding $H_0^1(\Om;w)\hra L^2(\Om)$ is compact. 
\end{lemma}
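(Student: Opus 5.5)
To prove Lemma \ref{08.04.L3}, we show that every bounded sequence in $H_0^1(\Om;w)$ is precompact in $L^2(\Om)$. The idea is to split $\Om$ into a strip near the degenerate boundary $\Gamma_2^0$ and the remaining region. Near $\Gamma_2^0$ the Hardy inequality of Lemma \ref{08.04.L2} makes the $L^2$ mass uniformly small. Away from it the weight is bounded below, so the classical Rellich--Kondrachov theorem applies.

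First, let $\{u_n\}$ satisfy $\|u_n\|_{H_0^1(\Om;w)}\le M$, with the norm of Remark \ref{08.04.R1}. By Lemma \ref{08.04.L2}, for any $\delta\in(0,1)$,
\begin{equation*}
\int_{(-1,1)\times(0,\delta)} u_n^2\,\df x=\int_{(-1,1)\times(0,\delta)} x_2^{2-\al}x_2^{\al-2}u_n^2\,\df x\le \delta^{2-\al}\int_\Om x_2^{\al-2}u_n^2\,\df x\le C\delta^{2-\al}M^2 .
\end{equation*}
This bound is uniform in $n$ and tends to $0$ as $\delta\to0$, since $2-\al>0$.

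Second, on $\Om_\delta=(-1,1)\times(\delta,1)$ we have $x_2^\al\ge\delta^\al$, so
\begin{equation*}
\int_{\Om_\delta}|\nabla u_n|^2\,\df x\le \delta^{-\al}M^2 .
\end{equation*}
Together with the $L^2$ bound from Remark \ref{08.04.R1}, this shows that $\{u_n|_{\Om_\delta}\}$ is bounded in $H^1(\Om_\delta)$. Since $\Om_\delta$ is a Lipschitz rectangle, Rellich--Kondrachov gives a subsequence converging in $L^2(\Om_\delta)$. One justification is needed here: elements of $H_0^1(\Om;w)$ have distributional gradients in $L^2_{loc}$ on $\Om_\delta$. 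This follows by density of $C_0^\infty(\Om)$ and the equivalence of the weighted and unweighted norms on $\Om_\delta$.

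Third, take $\delta_k=1/k$ and use a diagonal argument to extract a single subsequence $\{u_{n_j}\}$ that is Cauchy in $L^2(\Om_{\delta_k})$ for every $k$. Then
\begin{equation*}
\|u_{n_j}-u_{n_i}\|_{L^2(\Om)}^2\le \|u_{n_j}-u_{n_i}\|_{L^2(\Om_{\delta_k})}^2+4C\delta_k^{2-\al}M^2 .
\end{equation*}
Choosing $k$ large and then $i,j$ large shows that $\{u_{n_j}\}$ is Cauchy in $L^2(\Om)$, which proves compactness.

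No step is a serious obstacle. The key point is the uniform smallness near $\Gamma_2^0$, and it comes directly from the Hardy inequality already proved. The only care needed is the routine identification of $u_n|_{\Om_\delta}$ as an element of $H^1(\Om_\delta)$ in the second step.
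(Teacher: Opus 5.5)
Your proof is correct and follows the same route as the paper. The Hardy inequality of Lemma \ref{08.04.L2} makes the $L^2$ mass in the strip $(-1,1)\times(0,\delta)$ uniformly $O(\delta^{2-\alpha})$, and Rellich--Kondrachov applies on $\Om_\delta$, where the weight is bounded below. The only difference is bookkeeping: the paper first extracts a weakly convergent subsequence and subtracts its limit, so a single $\delta$ suffices and the strong limit on $\Om_\delta$ is forced to be $0$; you instead use a diagonal extraction over $\delta_k=1/k$ and the Cauchy criterion, which works equally well.
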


\begin{proof}
	Let $\{u_n\}_{n\in\N^*}\s H_0^1(\Om;w)$ be a bounded sequence, i.e., there exists a positive constant $M>0$ such that $\|u_n\|_{H_0^1(\Om;w)}\leq M$. Then there exists a subsequence of $\{u_n\}_{n\in\N^*}$, still denoted by itself, and $u_0\in H_0^1(\Om;w)$ such that 
	\begin{equation*}
		u_n\ra u_0 \mbox{ weakly in }H_0^1(\Om;w), \mbox{ and } u_n\ra u_0 \mbox{ weakly in }L^2(\Om). 
	\end{equation*}
	Replacing
	$u_n$
	by
	$u_n-u_0$, it suffices to assume that $u_0=0$.

	Let $\e>0$. For each $\de\in (0,\f{1}{4})$, denote 
	\begin{equation}\label{08.04.1}
		\Om_\de=(-1,1)\ts (\de,1) \mbox{ for all } \de\in (0,1). 
	\end{equation} 
	From Lemma \ref{08.04.L2}, we have 
	\begin{equation*}
		\begin{split}
			\int_{\Om_\de^c} u_n^2\df x
			&=\int_{\Om_\de^c} x_2^{2-\al}x_2^{\al-2}u_n^2\df x\leq \de^{2-\al}\int_\Om x_2^{\al-2}u_n^2\df x\leq C\de^{2-\al}\int_\Om \nabla u_n\cdot A\nabla u_n\df x\leq CM^2\de^{2-\al}. 
		\end{split}
	\end{equation*}
	Choosing  $\de_0>0$ such that for all $\de\in (0,\de_0]$  we have $\|u_n\|_{L^2(\Om_\de^c)}\leq \f{1}{2}\e$. Note that  
	\begin{equation*}
		\int_{\Om_{\de_0}} |\nabla u_n|^2\df x\leq \int_{\Om_{\de_0}}\left(|\pt_{x_1}u_n|^2+x_2^{-\al}x_2^\al |\pt_{x_2}u_n|^2\right)\df x\leq \de_0^{-\al}\int_\Om \nabla u\cdot A\nabla u\df x\leq M^2\de_0^{-\al}, 
	\end{equation*}
	which shows that $\{u_n|_{\Om_{\de_0}}\}_{n\in\N^*}\s  H^1(\Om_{\de_0})$ is bounded. Since the embedding $H^1(\Om_{\de_0})\hra L^2(\Om_{\de_0})$ is compact, then there exists a subsequence of $\{u_n|_{\Om_{\de_0}}\}_{n\in\N^*}$, still denoted by itself, such that 
	\begin{equation*}
		u_n|_{\Om_{\de_0}}\ra 0 \mbox{ strongly in } L^2(\Om_{\de_0}). 
	\end{equation*}
	Choose
	$n_\e$
	sufficiently large
	such that, such that $\|u_{n_\e}\|_{L^2(\Om_{\de_0})}\leq \f{1}{2}\e$, then  $\|u_{n_\e}\|_{L^2(\Om)}\leq \|u_{n_\e}\|_{L^2(\Om_{\de_0}^c)}+\|u_{n_\e}\|_{L^2(\Om_{\de_0})}<\e$.  This completes the proof.
\end{proof}

\subsection{Spectrum}

\begin{notation}\label{08.04.N1}
By Lemma \ref{08.04.L3} and the spectral theorem for compact
self-adjoint operators,
the operator $\mathcal A$ has a purely discrete spectrum
\begin{equation*}
0<\lambda_1\le\lambda_2\le\la_3\leq \cdots,\quad
\lambda_n\to\infty \mbox{ as } n\ra\iy.
\end{equation*}
i.e., associated with each eigenvalue $\la_n\ (n\in\N^*)$, there exists at least one $n$th eigenfunction $\Phi_n\in D(\mcA)$ such that 
\begin{equation}\label{08.04.2}
	\begin{cases}
		\mcA \Phi_n=\la_n \Phi_n, &x\in \Om, \\
		\Phi_n=0, &x\in\pt\Om. 
	\end{cases}
\end{equation}
We choose  $\{\Phi_n\}_{n\in\N^*}$ the orthonormal basis of $L^2(\Om)$. Moreover,  $\{\la_n^{-\f{1}{2}}\Phi_n\}_{n\in\N^*}$ is an orthonormal basis of $H_0^1(\Om;w)$. 
Hence, if  $u\in L^2(\Om)$, then $u=\sum_{n=1}^\iy (u,\Phi_n)_{L^2(\Om)}\Phi_n$. \end{notation}

\begin{definition}\label{08.04.D1}
	Let $\theta\in [0,+\iy)$. Define 
	\begin{equation*}
		D(\mcA^\theta)=\left\{u\in L^2(\Om)\colon \sum_{n=1}^\iy \la_n^{2\theta} (u, \Phi_n)_{L^2(\Om)}^2<+\iy\right\}.
	\end{equation*}
	The inner product and norm on $D(\mcA^\theta)$ are defined by 
	\begin{equation*}
		(u,v)_{D(\mcA^\theta)}=\sum_{n=1}^\iy \la_n^{2\theta} (u,\Phi_n)_{L^2(\Om)}(v,\Phi_n)_{L^2(\Om)},  
	\end{equation*}
	and 
	\begin{equation*}
		\|u\|_{D(\mcA^\theta)}=\left(\sum_{n=1}^\iy \la_n^{2\theta}(u,\Phi_n)_{L^2(\Om)}^2\right)^\f{1}{2}
	\end{equation*}
	respectively. 
\end{definition}

\begin{lemma}\label{08.04.L7}
	Let $\theta\in [0,+\iy)$. The space $(D(\mcA^\theta), (\cdot, \cdot)_{D(\mcA^\theta)})$ is a Hilbert space. Moreover, $D(\mcA^\f{1}{2})=H_0^1(\Om;w)$ and $D(\mcA^1)=D(\mcA)$. 
\end{lemma}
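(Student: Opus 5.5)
The plan is to identify $D(\mcA^\theta)$ isometrically with a weighted $\ell^2$ space and then check the two identifications through the eigenfunction expansion of Notation \ref{08.04.N1}.

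\textbf{Hilbert space structure.} Consider the map
\begin{equation*}
J\colon u\mapsto \bigl(\la_n^\theta (u,\Phi_n)_{L^2(\Om)}\bigr)_{n\in\N^*}\in \ell^2 .
\end{equation*}
By definition $J$ is an isometry from $D(\mcA^\theta)$ into $\ell^2$. Since $\la_n\ge\la_1>0$, we have $\|u\|_{L^2(\Om)}\le \la_1^{-\theta}\|u\|_{D(\mcA^\theta)}$, so $(\cdot,\cdot)_{D(\mcA^\theta)}$ is positive definite. The map $J$ is also onto: given $(c_n)\in\ell^2$, the series $\sum_n \la_n^{-\theta}c_n\Phi_n$ converges in $L^2(\Om)$, because $\la_n^{-\theta}\le\la_1^{-\theta}$, and its image under $J$ is $(c_n)$. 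Completeness of $\ell^2$ then gives completeness of $D(\mcA^\theta)$.

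\textbf{The identification $D(\mcA^{\f12})=H_0^1(\Om;w)$.} Recall from Notation \ref{08.04.N1} that $\{\la_n^{-\f12}\Phi_n\}$ is an orthonormal basis of $H_0^1(\Om;w)$, with the norm of Remark \ref{08.04.R1}. The eigen-equation \eqref{08.04.2} in weak form reads $(\Phi_n,v)_{H_0^1(\Om;w)}=\la_n(\Phi_n,v)_{L^2(\Om)}$ for all $v\in H_0^1(\Om;w)$. Hence for $u\in H_0^1(\Om;w)$,
\begin{equation*}
(u,\la_n^{-\f12}\Phi_n)_{H_0^1(\Om;w)}=\la_n^{\f12}(u,\Phi_n)_{L^2(\Om)} .
\end{equation*}
Parseval in $H_0^1(\Om;w)$ then gives $\|u\|_{H_0^1(\Om;w)}^2=\sum_n\la_n(u,\Phi_n)_{L^2(\Om)}^2$, which shows $H_0^1(\Om;w)\s D(\mcA^{\f12})$ isometrically.

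Conversely, let $u\in D(\mcA^{\f12})$. The partial sums $\sum_{n\le N}(u,\Phi_n)_{L^2(\Om)}\Phi_n$ lie in $H_0^1(\Om;w)$ and are Cauchy there, since their $H_0^1(\Om;w)$-norms are tails of $\sum\la_n(u,\Phi_n)^2$. They therefore converge in $H_0^1(\Om;w)$ to some $\tilde u$. They also converge to $u$ in $L^2(\Om)$, and the embedding $H_0^1(\Om;w)\hra L^2(\Om)$ is continuous by Remark \ref{08.04.R1}, so $\tilde u=u$.

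\textbf{The identification $D(\mcA^1)=D(\mcA)$.} Let $u\in D(\mcA)$. Since $u\in H_0^1(\Om;w)$ and $\mcA u\in L^2(\Om)$, we get
\begin{equation*}
(\mcA u,\Phi_n)_{L^2(\Om)}=(u,\Phi_n)_{H_0^1(\Om;w)}=\la_n(u,\Phi_n)_{L^2(\Om)} .
\end{equation*}
Here the first equality is the defining duality/Green identity for $\mcA u$ as a distribution tested against $\Phi_n\in H_0^1(\Om;w)$, justified by density of $C_0^\infty(\Om)$. Bessel's inequality then gives $\sum\la_n^2(u,\Phi_n)^2=\|\mcA u\|_{L^2(\Om)}^2<\infty$.

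Conversely, let $u\in D(\mcA^1)$. Then $u\in D(\mcA^{\f12})=H_0^1(\Om;w)$, and $f:=\sum\la_n(u,\Phi_n)\Phi_n\in L^2(\Om)$. Testing against $v\in C_0^\infty(\Om)$ and expanding $v$ in the $H_0^1(\Om;w)$ basis, we obtain
\begin{equation*}
\int_\Om \nabla u\cdot A\nabla v\,\df x=\sum_n\la_n(u,\Phi_n)(v,\Phi_n)=(f,v)_{L^2(\Om)} .
\end{equation*}
Thus $\mcA u=f\in L^2(\Om)$ in the distributional sense, and $u\in D(\mcA)$. The norms are equivalent: for $u\in D(\mcA)$ one has $\|\mcA u\|_{L^2(\Om)}=\|u\|_{D(\mcA^1)}$, and the $H^1(\Om;w)$ part of the norm is controlled by $\la_1^{-\f12}\|u\|_{D(\mcA^1)}$.

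\textbf{Main obstacle.} The delicate point is the rigorous meaning of $\mcA u$ for $u$ only in the weighted space, together with the Green identity $(\mcA u,v)_{L^2(\Om)}=\int_\Om\nabla u\cdot A\nabla v\,\df x$ for $v\in H_0^1(\Om;w)$. I would handle this by defining $\mcA u$ distributionally via $C_0^\infty(\Om)$ test functions, which is the natural reading of $H^2(\Om;w)$. I would then pass to general $v\in H_0^1(\Om;w)$ by density, using that both sides are continuous in $v$ for the $H_0^1(\Om;w)$ norm.
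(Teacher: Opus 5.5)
Your proposal is correct, but it takes a more hands-on route than the paper. The paper gets both identifications from abstract operator theory. It quotes $D(\mcA)=D(\mcA^1)$ as the spectral characterization of the domain of a self-adjoint operator. It obtains $D(\mcA^{\f12})=H_0^1(\Om;w)$ from Kato's representation theorem for the closed form $a(u,v)=\int_\Om \nabla u\cdot A\nabla v\,\df x$. Both citations tacitly assume that the concrete operator $-\Div(A\nabla\,\cdot)$ on $H_0^1(\Om;w)\cap H^2(\Om;w)$ is exactly the self-adjoint operator associated with that form, with eigenbasis $\{\Phi_n\}$; here $\mcA u$ is understood distributionally.

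You instead work entirely with the eigenfunction expansion. Parseval in $H_0^1(\Om;w)$ for the basis $\{\la_n^{-\f12}\Phi_n\}$ gives the form-domain identity. The Green identity $(\mcA u,v)_{L^2(\Om)}=\int_\Om\nabla u\cdot A\nabla v\,\df x$, extended from $C_0^\infty(\Om)$ by density, gives the operator-domain identity in both directions. That is precisely the verification the paper leaves implicit, so your argument is self-contained at the cost of some length. The paper's route is shorter and needs no explicit basis, but it rests on that unproved identification.

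Your surjectivity check for $J$ is also worth keeping. The paper only says the map into $\ell^2$ is an isometry, which does not by itself give completeness unless the range is closed.

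Two minor corrections. The identity $\sum_n\la_n^2(u,\Phi_n)_{L^2(\Om)}^2=\|\mcA u\|_{L^2(\Om)}^2$ is Parseval rather than Bessel. In the norm comparison, the $L^2$ part is bounded by $\la_1^{-1}\|u\|_{D(\mcA^1)}$ and the $H_0^1(\Om;w)$ part by $\la_1^{-\f12}\|u\|_{D(\mcA^1)}$.
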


\begin{proof}
	The first assertion follows directly from the spectral representation of
	$\mathcal A$. Indeed, the mapping
	\begin{equation*}
	u\mapsto \{\lambda_n^\theta (u,\Phi_n)_{L^2(\Omega)}\}_{n\ge1}
	\end{equation*}
	is an isometry from $D(\mathcal A^\theta)$ into the  Hilbert space  $\ell^2$, and hence
	$D(\mathcal A^\theta)$ is a Hilbert space.
	
	Moreover, by the spectral characterization of the domain of a
	self-adjoint operator,
	\begin{equation*}
	D(\mathcal A)
	=
	\left\{
	u\in L^2(\Omega):
	\sum_{n=1}^{\infty}\lambda_n^2
	(u,\Phi_n)_{L^2(\Omega)}^2<\infty
	\right\}
	=
	D(\mathcal A^1).
	\end{equation*}
	
	Finally, $\mathcal A$ is the operator associated with the closed symmetric bilinear form 
	\begin{equation*}
	a(u,v)=\int_\Omega w\nabla u\cdot\nabla v\,dx
	\end{equation*}
	on $H_0^1(\Omega;w)$. Hence, by the Kato's first representation theorem,  for
	closed forms,
	\begin{equation*}
	D(\mathcal A^{\f{1}{2}})=D(a)=H_0^1(\Omega;w).
	\end{equation*}
	This completes the proof. 
\end{proof}

\begin{corollary}\label{08.04.C1}
	Let $\theta\in [1,+\iy)$. Then the operator 
	\begin{equation*}
		\mcA: D(\mcA^\theta)\ra D(\mcA^{\theta-1})
	\end{equation*}
	is an isometry.
\end{corollary}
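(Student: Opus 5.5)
The plan is to work entirely in the spectral coordinates of Notation \ref{08.04.N1} and Definition \ref{08.04.D1}, where $\mcA$ acts diagonally. For $u\in D(\mcA^\theta)$ with $\theta\ge 1$, write $u_n=(u,\Phi_n)_{L^2(\Om)}$. Since $\theta\geq 1$ and $\la_n\geq\la_1>0$, we have $\la_n^{2}\le \la_1^{2-2\theta}\la_n^{2\theta}$. Hence $u\in D(\mcA^1)=D(\mcA)$ by Lemma \ref{08.04.L7}, so $\mcA u$ is defined and belongs to $L^2(\Om)$.

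The first step is to compute the coefficients of $\mcA u$. Using the self-adjointness of $\mcA$ on $D(\mcA)$ and \eqref{08.04.2},
\begin{equation*}
(\mcA u,\Phi_n)_{L^2(\Om)}=(u,\mcA\Phi_n)_{L^2(\Om)}=\la_n u_n .
\end{equation*}
It follows that
\begin{equation*}
\sum_{n=1}^\iy \la_n^{2(\theta-1)}(\mcA u,\Phi_n)_{L^2(\Om)}^2=\sum_{n=1}^\iy \la_n^{2\theta}u_n^2<+\iy .
\end{equation*}
This shows both that $\mcA u\in D(\mcA^{\theta-1})$ and that $\|\mcA u\|_{D(\mcA^{\theta-1})}=\|u\|_{D(\mcA^\theta)}$. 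So $\mcA$ maps $D(\mcA^\theta)$ into $D(\mcA^{\theta-1})$ and preserves norms.

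The second step, if isometry is meant as an isometric isomorphism, is surjectivity. Given $f\in D(\mcA^{\theta-1})$ with coefficients $f_n$, set $u=\sum_n \la_n^{-1}f_n\Phi_n$. Then $\sum_n\la_n^{2\theta}\la_n^{-2}f_n^2<\iy$, so $u\in D(\mcA^\theta)\s D(\mcA)$, and by the coefficient identity above $\mcA u$ has coefficients $f_n$. Since $\{\Phi_n\}$ is an orthonormal basis, $\mcA u=f$. Injectivity follows from norm preservation, so $\mcA$ is a bijective isometry.

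The only point needing care is the identity $(\mcA u,\Phi_n)=\la_n u_n$ for $u\in D(\mcA)$. It can be justified in either of two ways. One is the symmetry of $\mcA$, which comes from its being the operator associated with the symmetric closed form $a$ in the proof of Lemma \ref{08.04.L7}, with both $u$ and $\Phi_n$ in $D(\mcA)$. The other is to take it directly from the spectral characterization of $D(\mcA)$ used in that same lemma. No other obstacle is expected.
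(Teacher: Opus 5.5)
Your proof is correct and takes the paper's route: expand in the eigenbasis $\{\Phi_n\}$, identify the coefficients of $\mcA u$ as $\la_n (u,\Phi_n)_{L^2(\Om)}$, and compare the weighted sums to get $\|\mcA u\|_{D(\mcA^{\theta-1})}=\|u\|_{D(\mcA^\theta)}$. Your extra steps tighten points the paper leaves implicit, namely that $u\in D(\mcA)$, the coefficient identity via symmetry rather than termwise application of $\mcA$ to the series, and surjectivity.
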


\begin{proof}
	For $u=\sum_{n=1}^\iy (u,\Phi_n)_{L^2(\Om)}\Phi_n$, we obtain 
	\begin{equation*}
		\mcA u=\sum_{n=1}^\iy (u, \Phi_n)_{L^2(\Om)}\mcA \Phi_n=\sum_{n=1}^\iy \la_n(u,\Phi_n)_{L^2(\Om)}\Phi_n, 
	\end{equation*}
	and hence
	\begin{equation*}
		\|\mcA u\|_{D(\mcA^{\theta-1})}^2=\sum_{n=1}^\iy \la_n^{2(\theta-1)}\la_n^2(u,\Phi_n)_{L^2(\Om)}^2=\sum_{n=1}^\iy \la_n^{2\theta}(u,\Phi_n)_{L^2(\Om)}^2=\|u\|_{D(\mcA^\theta)}^2. 
	\end{equation*}
	This proves the corollary.
\end{proof}

\subsection{Method of separation of variables}

\begin{notation}\label{08.04.N2}
	Let 
	\begin{equation}\label{08.04.6}
		\La_1=(-1,1), \mbox{ and }\La_2=(0,1). 
	\end{equation}
	
	Denote 
	\begin{equation}\label{08.04.3}
		\mcA^{(1)}u=-\pt_{x_1x_1}u,\quad \mcA^{(2)}u=-\pt_{x_2}(x_2^\al \pt_{x_2}u), 
	\end{equation}
	then 
	\begin{equation}\label{08.04.4}
		\mcA u=\mcA^{(1)}u+\mcA^{(2)}u. 
	\end{equation}
	
\end{notation}

The weighted Sobolev spaces
$H^1(\Lambda_2;w)$,
$H_0^1(\Lambda_2;w)$
and
$H^2(\Lambda_2;w)$
are defined in the same way as those on $\Omega$.
For example,
\begin{equation*}
	H^2({\La_2};w)=\left\{u\in H^1({\La_2};w)\colon \mcA^{(2)}u\in L^2(\La_2)\right\}, 
\end{equation*}
the inner product and norm on $H^2({\La_2};w)$ are defined by 
\begin{equation*}
	(u,v)_{H^2({\La_2};w)}=(u,v)_{H^1({\La_2};w)}+(\mcA^{(2)}u,\mcA^{(2)}v)_{L^2({\La_2})},\mbox{ and } \|u\|_{H^2({\La_2};w)}=(u,u)_{H^2({\La_2};w)}^\f{1}{2}. 
\end{equation*}

Define 
\begin{equation}\label{08.04.5}
	D(\mcA^{(1)})=H_0^1({\La_1})\cap H^2({\La_1}), \mbox{ and } D(\mcA^{(2)})=H_0^1({\La_2};w)\cap H^2({\La_2};w). 
\end{equation}

The one-dimensional weighted Hardy inequality below 
will be used repeatedly.

\begin{lemma}[{\cite[(2.1), p.~165]{Alabau}}]\label{08.04.L4}
	There exists a positive constant $C$, depending only on $\al$, such that 
	\begin{equation*}
		\int_\Om x_2^{\al-2}u^2\df x\leq C\int_\Om x_2^\al (\pt_{x_2}u)^2\df x
	\end{equation*}
	for all $u\in H_0^1({\La_2};w)$.
\end{lemma}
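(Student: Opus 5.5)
The plan is to reproduce, in one variable, the Hölder-type argument from the proof of Lemma \ref{08.04.L2}. (The statement is written with $\int_\Om$, but for $u\in H_0^1(\La_2;w)$ the integrals are over $\La_2=(0,1)$ in $x_2$.) The argument has three steps.

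\textbf{Reduction to smooth functions.} By the definition of $H_0^1(\La_2;w)$ as the closure of $C_0^\infty(\La_2)$, it suffices to prove the inequality for $u\in C_0^\infty(\La_2)$. Take $u_n\in C_0^\infty(\La_2)$ with $u_n\ra u$ in $H_0^1(\La_2;w)$. Then $\sqrt{x_2^\al}\,u_n'\ra\sqrt{x_2^\al}\,u'$ in $L^2(\La_2)$ and $u_n\ra u$ in $L^2(\La_2)$. Passing to an a.e.-convergent subsequence and applying Fatou's lemma to the left-hand side transfers the inequality to $u$.

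\textbf{The estimate for smooth $u$.} Since $u(0)=0$, write $u(x_2)=\int_0^{x_2}u'(s)\,\df s$. Fix $\be\in(\al,1)$ and apply Cauchy--Schwarz with the weights $s^{\be}$ and $s^{-\be}$:
\begin{equation*}
u(x_2)^2\le \Big(\int_0^{x_2}s^\be|u'(s)|^2\df s\Big)\f{x_2^{1-\be}}{1-\be}.
\end{equation*}
Multiply by $x_2^{\al-2}$, integrate over $(0,1)$, and use Fubini. This gives
\begin{equation*}
\int_0^1 x_2^{\al-2}u^2\,\df x_2\le \f{1}{1-\be}\int_0^1 s^\be|u'(s)|^2\int_s^1 x_2^{\al-\be-1}\df x_2\,\df s .
\end{equation*}
Since $\al-\be<0$, the inner integral is at most $\f{1}{\be-\al}s^{\al-\be}$, so the right-hand side is at most $\f{1}{(\be-\al)(1-\be)}\int_0^1 s^\al|u'|^2\df s$.

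\textbf{Choice of $\be$.} Taking $\be=\f{1+\al}{2}$ gives the constant $C=\f{4}{(1-\al)^2}$, which depends only on $\al$.

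The only delicate point is the first step: one needs the pointwise representation $u(x_2)=\int_0^{x_2}u'$, which holds for compactly supported smooth functions. The density and Fatou argument is what justifies extending the bound to all of $H_0^1(\La_2;w)$. It is also worth recording that $\be<1$ makes $s^{-\be}$ integrable near $0$, and $\be>\al$ makes the inner integral behave like $s^{\al-\be}$, which exactly absorbs the factor $s^{\be}$.
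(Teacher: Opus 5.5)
Your proposal is correct and is essentially the paper's argument: for this one-dimensional statement the paper only cites \cite{Alabau}, but its own proof of the two-dimensional analogue, Lemma~\ref{08.04.L2}, is exactly your computation. It uses density, Cauchy--Schwarz with the weights $s^{\pm\beta}$, Fubini, the choice $\beta=\frac{1+\alpha}{2}$, and arrives at the constant $\frac{4}{(1-\alpha)^2}$. Your Fatou step is a useful precision of the paper's bare ``by a density argument'', since continuity of the left-hand side under $H_0^1(\La_2;w)$-convergence is not available before the inequality has been proved.
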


\begin{remark}\label{08.04.R2}
	From Lemma \ref{08.04.L4}, we get 
	\begin{equation*}
		\begin{split}
			\int_{\La_2} u^2\df x_2=\int_{\La_2} x_2^{2-\al}x_2^{\al-2}u^2\df x_2\leq \int_{\La_2} x_2^{\al-2}u^2\df x_2\leq C\int_{\La_2} w(\pt_{x_2}u)^2\df x_2, 
		\end{split}
	\end{equation*}
	hence, the norm 
	\begin{equation*}
		\|u\|_{H_0^1({\La_2};w)}=\left(\int_{\La_2} w(\pt_{x_2}u)^2\df x\right)^\f{1}{2}
	\end{equation*}
	is an equivalent norm of $H_0^1({\La_2};w)$. In what follows, we use this norm on $H_0^1({\La_2};w)$. 
\end{remark}

\begin{lemma}\label{08.04.L5}
	The embedding $H_0^1({\La_2};w)\hra L^2({\La_2})$ is compact. 
\end{lemma}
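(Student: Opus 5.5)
The plan is to repeat, in one dimension, the argument of Lemma \ref{08.04.L3}: the weighted Hardy inequality controls the mass near the degenerate endpoint $x_2=0$, and away from it the weight is bounded below, so Rellich's theorem applies.

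Let $\{u_n\}_{n\in\N^*}\s H_0^1(\La_2;w)$ be bounded, say $\|u_n\|_{H_0^1(\La_2;w)}\leq M$ in the norm of Remark \ref{08.04.R2}. Since $H_0^1(\La_2;w)$ is a Hilbert space (the one-dimensional analogue of Lemma \ref{08.04.L1}), we may pass to a subsequence converging weakly in $H_0^1(\La_2;w)$, and hence weakly in $L^2(\La_2)$ because of the continuous embedding in Remark \ref{08.04.R2}, to some $u_0$. Replacing $u_n$ by $u_n-u_0$, we may assume $u_0=0$, and it remains to show that $u_n\ra 0$ strongly in $L^2(\La_2)$.

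\textbf{Step 1: small mass near the degenerate end.} For $\de\in(0,1)$, Lemma \ref{08.04.L4} in its one-dimensional form $\int_{\La_2}x_2^{\al-2}u^2\,\df x_2\leq C\int_{\La_2}x_2^\al(\pt_{x_2}u)^2\,\df x_2$, which is the cited Hardy inequality, gives
\begin{equation*}
\int_0^\de u_n^2\,\df x_2\leq \de^{2-\al}\int_0^\de x_2^{\al-2}u_n^2\,\df x_2\leq C M^2\de^{2-\al}.
\end{equation*}
Given $\e>0$, we fix $\de_0$ so that $\|u_n\|_{L^2(0,\de_0)}\leq \e/2$ for every $n$.

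\textbf{Step 2: compactness away from $0$.} On $(\de_0,1)$ we have $x_2^\al\geq\de_0^\al$, so $\int_{\de_0}^1|u_n'|^2\,\df x_2\leq \de_0^{-\al}M^2$. Together with the uniform $L^2$ bound, this shows that $\{u_n|_{(\de_0,1)}\}$ is bounded in $H^1(\de_0,1)$. The embedding $H^1(\de_0,1)\hra L^2(\de_0,1)$ is compact (Rellich), and $u_n|_{(\de_0,1)}\ra 0$ weakly in $L^2(\de_0,1)$. Hence the whole sequence converges strongly to $0$ in $L^2(\de_0,1)$, since the limit is identified and every subsequence has a further convergent subsequence. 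So $\|u_n\|_{L^2(\de_0,1)}\leq\e/2$ for all large $n$.

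Combining the two steps gives $\limsup_n\|u_n\|_{L^2(\La_2)}\leq\e$ for every $\e>0$, so $u_n\ra0$ in $L^2(\La_2)$. This proves compactness.

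The only delicate point is Step 1, namely applying the Hardy inequality on elements of $H_0^1(\La_2;w)$. I would justify it by density of $C_0^\infty(\La_2)$: establish the estimate for smooth compactly supported functions exactly as in the proof of Lemma \ref{08.04.L2}, with $\be=\f{1+\al}{2}$, and then pass to the limit using Fatou's lemma.
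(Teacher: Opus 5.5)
Your proof is correct and follows the paper's route: the paper's proof just says it is the same as that of Lemma \ref{08.04.L3}. That argument uses the weighted Hardy inequality to make the $L^2$-mass near the degenerate endpoint $x_2=0$ uniformly small, and Rellich's theorem on $(\delta_0,1)$, where the weight is bounded below. Two of your steps are in fact more careful than the paper's version, which picks a single index $n_\epsilon$ from an $\epsilon$-dependent subsequence: you identify the strong limit on $(\delta_0,1)$ with the weak limit $0$ to get $\limsup_n\|u_n\|_{L^2(\Lambda_2)}\le\epsilon$ for the whole sequence, and you justify the Hardy inequality on $H_0^1(\Lambda_2;w)$ by density and Fatou.
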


\begin{proof}
	The proof of this lemma is the same as the proof of Lemma \ref{08.04.L3}. 
\end{proof}

\begin{notation}\label{08.04.N3}
	Since the embeddings 
	\begin{equation*}
		H_0^1({\La_1})\hra L^2({\La_1}), \mbox{ and } H_0^1({\La_2};w)\hra L^2({\La_2}) 
	\end{equation*}
	are compact,  
	the operators
	$\mathcal A^{(1)}$
	and
	$\mathcal A^{(2)}$
	have purely discrete simple spectra 
	\begin{equation*}
		0<\la_1^{(i)}\leq \la_2^{(i)}\leq \la_3^{(i)}\leq \cdots \ra +\iy, \ i=1,2, 
	\end{equation*}
	where $\mcA^{(1)}, \mcA^{(2)}$ are defined in \eqref{08.04.3}. Associated with each eigenvalue
	$\lambda_n^{(i)}$
	there exists an eigenfunction
	$\Phi_n^{(i)}$
	satisfying
	\begin{equation*}
		\begin{cases}
			\mcA^{(i)}\Phi_n^{(i)}=\la_n^{(i)}\Phi_n^{(i)}, &x\in {\La_i}, \\
			\Phi_n^{(i)}=0, &x\in \pt {\La_i}. 
		\end{cases} 
	\end{equation*}
	We denote $\{\Phi_n^{(i)}\}_{n\in\N^*}$  the orthonormal basis of $L^2(\La_i)$. Moreover,  $\{(\la_n^{(1)})^{-\f{1}{2}}\Phi_n^{(1)}\}_{n\in\N^*}$ is an orthonormal basis of $H_0^1({\La_1})$, and $\{(\la_n^{(2)})^{-\f{1}{2}}\Phi_n^{(2)}\}_{n\in\N^*}$ is an orthonormal basis of $H_0^1({\La_2})$. 
\end{notation}

From \cite[Lemma 2, p.~2047]{Gueye}, or  \cite[(74), p.~198]{Cannarsa}, or  \cite[(1.3), p.~2509]{Buffe}, there exists a positive constant $C$ such that 
\begin{equation}\label{08.05.7}
	\sqrt{\la_{k+1}^{(2)}}-\sqrt{\la_k^{(2)}}\geq C(2-\al), \mbox{ for all }k\in\N^* \mbox{ and } \al\in [0,1). 
\end{equation}

The following lemma follows from the method of separation of variables.

\begin{lemma}[{\cite[Lemma 3.3]{Yang}}]\label{08.04.L6}
	We have 
	\begin{equation*}
		H_0^1(\Om;w)=H_0^1({\La_1})\wh\ots H_0^1({\La_2};w). 
	\end{equation*} 
\end{lemma}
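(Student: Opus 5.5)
We read $\wh\ots$ as in \cite{Yang}: the right-hand side is the completion, in the norm of $H_0^1(\Om;w)$ from Remark \ref{08.04.R1}, of the algebraic tensor product
\begin{equation*}
H_0^1(\La_1)\ots H_0^1(\La_2;w)=\mathrm{span}\{\varphi(x_1)\psi(x_2)\colon \varphi\in H_0^1(\La_1),\ \psi\in H_0^1(\La_2;w)\}.
\end{equation*}
The plan has two inclusions. First, the algebraic tensor product lies in $H_0^1(\Om;w)$. Second, it is dense there. Density then identifies the completion with $H_0^1(\Om;w)$, since the latter is a Hilbert space by Lemma \ref{08.04.L1}. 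Throughout, we use the product functions $\Phi_m^{(1)}(x_1)\Phi_n^{(2)}(x_2)$ from Notation \ref{08.04.N3}. This reading is an interpretive choice. The stated identity would fail for the literal Hilbert-space tensor norm, whose norm is built from the mixed derivative $\pt_{x_1}\pt_{x_2}$ and is not equivalent to that of $H_0^1(\Om;w)$.

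\textbf{Step 1: products lie in $H_0^1(\Om;w)$.} Take $\varphi\in C_0^\iy(\La_1)$ and $\psi\in C_0^\iy(\La_2)$. Then $\varphi\psi\in C_0^\iy(\Om)$ and
\begin{equation*}
\|\varphi\psi\|_{H_0^1(\Om;w)}^2=\|\varphi'\|_{L^2(\La_1)}^2\|\psi\|_{L^2(\La_2)}^2+\|\varphi\|_{L^2(\La_1)}^2\int_{\La_2}x_2^\al|\psi'|^2\df x_2 .
\end{equation*}
By Poincar\'e's inequality on $\La_1$ and Remark \ref{08.04.R2}, the right-hand side is bounded by $C\|\varphi\|_{H_0^1(\La_1)}^2\|\psi\|_{H_0^1(\La_2;w)}^2$. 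So the bilinear map $(\varphi,\psi)\mapsto\varphi\psi$ is bounded. Approximating $\varphi$ and $\psi$ by smooth compactly supported functions in their respective spaces then shows $\varphi\psi\in H_0^1(\Om;w)$ for all admissible $\varphi,\psi$. Hence the closure of the algebraic tensor product is contained in $H_0^1(\Om;w)$.

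\textbf{Step 2: product eigenfunctions.} For $\Phi_{mn}:=\Phi_m^{(1)}\otimes\Phi_n^{(2)}$, the decomposition \eqref{08.04.4} gives
\begin{equation*}
\mcA\Phi_{mn}=(\la_m^{(1)}+\la_n^{(2)})\Phi_{mn},
\end{equation*}
with $\Phi_{mn}\in D(\mcA)$. The family $\{\Phi_{mn}\}$ is orthonormal and complete in $L^2(\Om)=L^2(\La_1)\wh\ots L^2(\La_2)$. Therefore, up to reordering, these are all the eigenfunctions of Notation \ref{08.04.N1}, and the eigenvalues satisfy $\{\la_k\}=\{\la_m^{(1)}+\la_n^{(2)}\}$.

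\textbf{Step 3: density, the main step.} Let $u\in H_0^1(\Om;w)$ be orthogonal in $H_0^1(\Om;w)$ to every $\Phi_{mn}$. Integrating by parts against $\Phi_{mn}\in D(\mcA)$, we expect
\begin{equation*}
(u,\Phi_{mn})_{H_0^1(\Om;w)}=(\la_m^{(1)}+\la_n^{(2)})(u,\Phi_{mn})_{L^2(\Om)},
\end{equation*}
so every $L^2$ coefficient of $u$ vanishes and $u=0$. This justification is the point I expect to need care, because it is the weighted Green formula near $x_2=0$. I would first try to derive it from the form characterization in Lemma \ref{08.04.L7}: since $\mcA$ is the operator associated with the form $a$, we have $a(u,v)=(u,\mcA v)_{L^2(\Om)}$ for all $u\in H_0^1(\Om;w)$ and $v\in D(\mcA)$, with no explicit boundary terms.

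Equivalently, Notation \ref{08.04.N1} already tells us that the normalized eigenfunctions form an orthonormal basis of $H_0^1(\Om;w)$. Step 2 identifies them with the products $\Phi_{mn}$. So the span of the products, which lies in the algebraic tensor product, is dense. Combining this density with Step 1 gives the stated identity.
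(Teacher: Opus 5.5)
Your proof is correct. Your reading of $\wh\ots$ as the $H_0^1(\Om;w)$-closure of finite sums of products is the right one: the Hilbert tensor norm would give $\Phi_m^{(1)}\Phi_n^{(2)}$ squared norm $\la_m^{(1)}\la_n^{(2)}$ instead of $\la_m^{(1)}+\la_n^{(2)}$, and hence a strictly smaller space. The Green formula you flag in Step 3 needs no boundary analysis. The identity $(u,\Phi_m^{(1)}\Phi_n^{(2)})_{H_0^1(\Om;w)}=(\la_m^{(1)}+\la_n^{(2)})(u,\Phi_m^{(1)}\Phi_n^{(2)})_{L^2(\Om)}$ follows for $u\in C_0^\iy(\Om)$ from Fubini and the one-dimensional weak eigenvalue equations, and then for all $u\in H_0^1(\Om;w)$ by density, since that space is by definition the closure of $C_0^\iy(\Om)$; no term at $x_2=0$ ever appears.

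The paper gives no proof of its own; it only cites \cite[Lemma 3.3]{Yang} and attributes the lemma to separation of variables, which is the product-eigenfunction argument you carry out.
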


\begin{remark}\label{08.04.R3}
	From Lemma \ref{08.04.L6}, we obtain  
	$\{\Phi_n^{(1)}\Phi_k^{(2)}\}_{n,k\in\N^*}$ is an orthonormal basis of $L^2(\Om)$, and $\la_n^{(1)}+\la_k^{(2)}\ (n,k\in\N^*)$  form the spectrum  $\mcA$. Hence, for all $m\in\N^*$, we have the form $\la_m=\la_n^{(1)}+\la_k^{(2)}$, and $\Phi_m=\Phi_n^{(1)}\Phi_k^{(2)}$ for some $n,k\in\N^*$.  Throughout in this paper, we denote
	\begin{equation*}
		\la_{nk}
		=
		\lambda_n^{(1)}+\lambda_k^{(2)}.
	\end{equation*}  
\end{remark}

The counterparts of Lemma
\ref{08.04.L7}
and Corollary
\ref{08.04.C1}
remain valid for the operators
$\mathcal A^{(1)}$
and
$\mathcal A^{(2)}$.

\begin{remark}\label{08.05.R1}
	The eigenpairs of
	$\mathcal A^{(1)}$
	are explicitly given by 
	\begin{equation*}
		\la_n^{(1)}=\f{n^2\pi^2}{4},\quad \Phi_n^{(1)}=\sin \f{n\pi(x_1+1)}{2}. 
	\end{equation*}
	The eigenpairs of
	$\mathcal A^{(2)}$
	are explicitly given by 
	\begin{equation*}
		\la_n^{(2)}=(\ka j_{\nu,n})^2, \quad \Phi_n^{(2)}=\f{(2\ka)^\f{1}{2}}{|J_\nu'(j_{\nu,n})|}x_2^{\f{1}{2}(2-\al)}J_\nu (j_{\nu, n}x_2^\ka), 
	\end{equation*}
	where $\nu=\f{1-\al}{2-\al}$ and $\ka=\f{1}{2}(2-\al)$. 
	(See \cite[(4.35) and (4.38), p.~2046-2047]{Gueye}.) 
	
	It is clear that $\lambda_n^{(1)}\sim n^2$ and, by the
	asymptotic behavior of the zeros of the Bessel functions,
	$\lambda_n^{(2)}\sim n^2$ as $n\to\infty$.
\end{remark}

\subsection{Weak solution}

Let
\begin{equation*}
y^0\in H_0^1(\Omega;w),\qquad
y^1\in L^2(\Omega).
\end{equation*}
By Remark \ref{08.04.R3},  the initial data admit the expansions
\begin{equation*}
y^0=\sum_{n,k=1}^\infty y_{nk}^0\Phi_n^{(1)}\Phi_k^{(2)},
\qquad
y^1=\sum_{n,k=1}^\infty y_{nk}^1\Phi_n^{(1)}\Phi_k^{(2)},
\end{equation*}
where the Fourier coefficients are given by
\begin{equation*}
	y_{nk}^0=(y^0, \Phi_n^{(1)}\Phi_k^{(2)})_{L^2(\Om)}, \mbox{ and } y_{nk}^1=(y^1,\Phi_n^{(1)}\Phi_k^{(2)})_{L^2(\Om)}, \mbox{ for all } n,k\in\N^*. 
\end{equation*}
The weak solution to \eqref{01.08.1} is therefore given by
\begin{equation}\label{08.05.11}
	y=\sum_{n,k=1}^\iy \left(y_{nk}^0 \cos\left(\sqrt{\la_{nk}}t\right)+y_{nk}^1\f{1}{\sqrt{\la_{nk}}}\sin\left(\sqrt{\la_{nk}}t\right)\right)\Phi_n^{(1)}\Phi_k^{(2)}, 
\end{equation}
where $\la_{nk}\ (n,k\in\N^*)$ is defined in Remark \ref{08.04.R3}.

\section{Shape design}\label{S3}

In this section, we shall show that the weak solution of \eqref{01.08.1} can be approximated by a family of wave equations with uniformly elliptic operators. 

Let $\de\in (0,\f{1}{4})$. We consider the following uniformly hyperbolic equation
\begin{equation}\label{08.04.7}
	\begin{cases}
		\pt_{tt}y_\de-\Div(A_\de \nabla y_\de)=0, &\mbox{in }Q_\de, \\
		y_\de=0, &\mbox{on }\Sigma_\de,\\
		y_\de(0)=y_\de^0, \pt_ty_\de(0)=y_\de^1, &\mbox{in }\Om_\de, 
	\end{cases}
\end{equation}
where $
\Omega_\delta=(-1,1)\times(\delta,1)$, 
and $
Q_\delta=\Omega_\delta\times(0,T), 
\Sigma_\delta=\partial\Omega_\delta\times(0,T)$. 
Moreover, $
A_\delta=A|_{\Omega_\delta}$ and $w_\de=w|_{\Om_\de}$. 
Denote
\begin{equation*}
	\Ga_{\de,1}^{-1}=\{-1\}\ts (\de,1), \quad \Ga_{\de,1}^1=\{1\}\ts (\de,1), \quad \Ga_{\de,2}^\de=(-1,1)\ts \{\de\},\quad \Ga_{\de,2}^1=(-1,1)\ts \{1\}. 
\end{equation*}

To solve \eqref{08.04.7}, we introduce
\begin{equation*}
	H_0^1(\Om_\de;w_\de), \quad H^1(\Om_\de;w_\de), \mbox{ and } H^2(\Om_\de;w_\de). 
\end{equation*}
For example, 
\begin{equation*}
	H^1(\Om_\de;w_\de)=\left\{u\in L^2(\Om_\de)\colon \int_{\Om_\de} \nabla u\cdot A_\de\nabla u\df x<+\iy \right\}. 
\end{equation*}
Since $x_2\ge\delta>0$ in $\Omega_\delta$, the weighted Sobolev norms are equivalent to the standard Sobolev norms. Consequently,
$
H_0^1(\Omega_\delta;w_\delta)=H_0^1(\Omega_\delta), 
H^1(\Omega_\delta;w_\delta)=H^1(\Omega_\delta).
$
Moreover, by elliptic regularity,
$
H^2(\Omega_\delta;w_\delta)=H^2(\Omega_\delta)$. 

We  also introduce 
\begin{equation*}
	\La_2^\de=(\de,1). 
\end{equation*}
And define $H_0^1(\La_2^\de;w_\de), H^1(\La_2^\de;w_\de)$ and $H^2(\La_2^\de;w_\de)$ as above. Define
\begin{equation*}
	\mcA_\de u=-\Div(A_\de\nabla u), 
\end{equation*}
and 
\begin{equation*}
	\mcA_\de u =\mcA_\de ^{(1)} u+\mcA_\de^{(2)} u, \mbox{ with } \mcA_\de^{(1)}=-\pt_{x_1x_1}u \mbox{ and } \mcA_\de^{(2)}=-\pt_{x_2}(x_2^\al \pt_{x_2}u). 
\end{equation*}

Since the embedding  $H_0^1(\La_2^\de;w_\de)\hra L^2(\La_2^\de)$ is compact, then $\mcA_\de^{(2)}$ has a purely discrete simple spectrum
\begin{equation*}
	0<\la_{\de,1}^{(2)}\leq \la_{\de,2}^{(2)}\leq \la_{\de,3}^{(2)}\leq \cdots \ra+\iy. 
\end{equation*}
And we denote $\Phi_{\de,n}^{(2)}$ the normalized $n$th eigenfunction of $\mcA_\de^{(2)}$ with respect to the $n$th eigenvalue  $\la_{\de,n}^{(2)}$.

From above, we get the following lemma by Lemma \ref{08.04.L6}.  

\begin{lemma}\label{08.04.L9}
	We have 
	\begin{equation*}
		H_0^1(\Om_\de;w_\de)=H_0^1({\La_1})\wh\ots H_0^1({\La_2^\de};w_\de). 
	\end{equation*} 
\end{lemma}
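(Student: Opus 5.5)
We want to prove $H_0^1(\Om_\de;w_\de)=H_0^1(\La_1)\wh\ots H_0^1(\La_2^\de;w_\de)$, with the Hilbert tensor product completed in the norm of Remark \ref{08.04.R1} on $\Om_\de$. The plan is to repeat the argument behind Lemma \ref{08.04.L6}, which is simpler here because $x_2\ge\de>0$ on $\Om_\de$. Then $H_0^1(\Om_\de;w_\de)=H_0^1(\Om_\de)$ and $H_0^1(\La_2^\de;w_\de)=H_0^1(\La_2^\de)$ with equivalent norms. The whole argument therefore reduces to the classical statement $H_0^1(\La_1\ts\La_2^\de)=H_0^1(\La_1)\wh\ots H_0^1(\La_2^\de)$, carried out spectrally.

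\textbf{Step 1: an orthonormal basis of $L^2(\Om_\de)$.} By Notation \ref{08.04.N3}, $\{\Phi_n^{(1)}\}$ is an orthonormal basis of $L^2(\La_1)$. By the discussion preceding the lemma, $\{\Phi_{\de,k}^{(2)}\}$ is an orthonormal basis of $L^2(\La_2^\de)$. Since $L^2(\La_1\ts\La_2^\de)=L^2(\La_1)\wh\ots L^2(\La_2^\de)$ (Fubini), the products $\Phi_n^{(1)}\Phi_{\de,k}^{(2)}$ form an orthonormal basis of $L^2(\Om_\de)$.

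\textbf{Step 2: these products are eigenfunctions of $\mcA_\de$.} Each product lies in $H_0^1(\Om_\de)$ and satisfies $\mcA_\de(\Phi_n^{(1)}\Phi_{\de,k}^{(2)})=(\la_n^{(1)}+\la_{\de,k}^{(2)})\Phi_n^{(1)}\Phi_{\de,k}^{(2)}$. Moreover, for $u,v$ in the algebraic tensor product,
\begin{equation*}
\int_{\Om_\de}\nabla u\cdot A_\de\nabla v\,\df x=(\pt_{x_1}u,\pt_{x_1}v)_{L^2}+(x_2^\al\pt_{x_2}u,\pt_{x_2}v)_{L^2}.
\end{equation*}
On elementary tensors $u=f\ots g$ the right-hand side is the cross norm associated with $\mcA^{(1)}\ots I+I\ots\mcA_\de^{(2)}$. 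Consequently, $\{(\la_n^{(1)}+\la_{\de,k}^{(2)})^{-1/2}\Phi_n^{(1)}\Phi_{\de,k}^{(2)}\}$ is an orthonormal system in $H_0^1(\Om_\de;w_\de)$.

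\textbf{Step 3: completeness, the main point.} Suppose $u\in H_0^1(\Om_\de;w_\de)$ is orthogonal in the energy inner product to every product $\Phi_n^{(1)}\Phi_{\de,k}^{(2)}$. Since each product lies in $D(\mcA_\de)$, integrating by parts (Green's formula is legitimate, since everything is in standard $H^1_0$ and $H^2$ on a Lipschitz rectangle) gives
\begin{equation*}
0=(u,\mcA_\de(\Phi_n^{(1)}\Phi_{\de,k}^{(2)}))_{L^2}=(\la_n^{(1)}+\la_{\de,k}^{(2)})(u,\Phi_n^{(1)}\Phi_{\de,k}^{(2)})_{L^2}.
\end{equation*}
Hence all Fourier coefficients of $u$ vanish, and $u=0$ by Step 1. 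So the closed span of the products, which is exactly $H_0^1(\La_1)\wh\ots H_0^1(\La_2^\de;w_\de)$ with the energy norm, equals $H_0^1(\Om_\de;w_\de)$. This also shows that the spectrum of $\mcA_\de$ is $\{\la_n^{(1)}+\la_{\de,k}^{(2)}\}$.

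The only delicate point is identifying the tensor-product norm with the $\Om_\de$ energy norm, that is, checking that the completion is taken in a compatible topology. This follows from the equivalence of norms noted above together with the orthonormality in Step 2. Alternatively, as the paper indicates, the result follows verbatim from Lemma \ref{08.04.L6} with $\La_2$ replaced by $\La_2^\de$, where the weight is now bounded below.
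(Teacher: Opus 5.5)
Your Steps 1--3 are correct, provided $\widehat\otimes$ means what you fix at the outset: the completion of the algebraic tensor product in the energy norm of $\Omega_\delta$. This is a different route from the paper's. The paper gives no separate argument. After noting that $x_2\ge\delta$ makes the weighted spaces on $\Omega_\delta$ and $\Lambda_2^\delta$ coincide with the unweighted ones, it simply transfers Lemma~\ref{08.04.L6} (the $\delta=0$ case, taken from the literature) with $\Lambda_2$ replaced by $\Lambda_2^\delta$.

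You instead prove the statement directly. The normalized products $(\lambda_n^{(1)}+\lambda_{\delta,k}^{(2)})^{-1/2}\Phi_n^{(1)}\Phi_{\delta,k}^{(2)}$ are energy-orthonormal, and by Green's formula energy-orthogonality to all of them kills every $L^2$ Fourier coefficient. This is self-contained and avoids the external reference. It also yields at once the facts the paper actually uses afterwards in Remark~\ref{08.04.R4}: the product basis of $L^2(\Omega_\delta)$ and the spectrum $\{\lambda_n^{(1)}+\lambda_{\delta,k}^{(2)}\}$.

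One small addition is needed to conclude that the completion equals the closed span of the eigenfunction products: check that the algebraic tensor product lies in $H_0^1(\Omega_\delta)$. Approximate $f\in H_0^1(\Lambda_1)$ and $g\in H_0^1(\Lambda_2^\delta)$ by $C_0^\infty$ functions; the products lie in $C_0^\infty(\Omega_\delta)$ and converge in $H^1$.

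Your final paragraph should be removed, and the phrase ``cross norm'' in Step~2 is a misnomer. On an elementary tensor the energy norm is $\|f'\|^2\|g\|^2+\|f\|^2\|x_2^{\alpha/2}g'\|^2$, not $\|f\|_{H_0^1(\Lambda_1)}^2\|g\|_{H_0^1(\Lambda_2^\delta;w_\delta)}^2$. In the product basis, the genuine Hilbert tensor product norm is $\sum\lambda_n^{(1)}\lambda_{\delta,k}^{(2)}c_{nk}^2$, while the energy norm is $\sum(\lambda_n^{(1)}+\lambda_{\delta,k}^{(2)})c_{nk}^2$. The ratio $\lambda_n^{(1)}\lambda_{\delta,k}^{(2)}/(\lambda_n^{(1)}+\lambda_{\delta,k}^{(2)})\ge\frac12\min\{\lambda_n^{(1)},\lambda_{\delta,k}^{(2)}\}$ is unbounded along $n=k$, so the two norms are not equivalent. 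The Hilbert tensor product $H_0^1(\Lambda_1)\widehat\otimes H_0^1(\Lambda_2^\delta;w_\delta)$ is in fact the strictly smaller mixed space, which also requires $\partial_{x_1}\partial_{x_2}u\in L^2$.

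So no ``identification of the tensor-product norm with the energy norm'' can follow from the norm equivalence given by $x_2\ge\delta$; that equivalence is irrelevant to this issue. The lemma holds exactly under your reading, equivalently $H_0^1(\Omega_\delta)=\bigl(H_0^1(\Lambda_1)\widehat\otimes L^2(\Lambda_2^\delta)\bigr)\cap\bigl(L^2(\Lambda_1)\widehat\otimes H_0^1(\Lambda_2^\delta)\bigr)$. Under that reading the identification is a definition, not something to prove.
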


\begin{remark}\label{08.04.R4}
	From Lemma \ref{08.04.L9}, we obtain  
	$\{\Phi_n^{(1)}\Phi_{\de,k}^{(2)}\}_{n,k\in\N^*}$ is an  orthonormal basis of $L^2(\Om)$, and $\la_n^{(1)}+\la_{\de,k}^{(2)}\ (n,k\in\N^*)$ form the spectrum of $\mcA_\de$. Hence, for all $m\in\N^*$, there exist $n,k\in\mathbb N^*$ such that  $\la_m=\la_n^{(1)}+\la_{\de,k}^{(2)}$, and $\Phi_m=\Phi_n^{(1)}\Phi_{\de,k}^{(2)}$ for some $n,k\in\N^*$. Throughout in this paper, we denote
	\begin{equation*}
		\la_{nk}^\de
		=
		\lambda_n^{(1)}+\lambda_{\de,k}^{(2)}.
	\end{equation*} 
\end{remark}

Let
\begin{equation*}
	y_\de^0\in H_0^1(\Omega_\de;w),\qquad
	y_\de^1\in L^2(\Omega_\de).
\end{equation*}
By Remark \ref{08.04.R4}, they admit the expansions
\begin{equation*}
	y_\de^0=\sum_{n,k=1}^\infty y_{\de,nk}^0\Phi_n^{(1)}\Phi_{\de,k}^{(2)},
	\qquad
	y_\de^1=\sum_{n,k=1}^\infty y_{\de,nk}^1\Phi_n^{(1)}\Phi_{\de,k}^{(2)},
\end{equation*}
where 
\begin{equation*}
	y_{\de,nk}^0=(y_\de^0, \Phi_n^{(1)}\Phi_{\de,k}^{(2)})_{L^2(\Om_\de)}, \mbox{ and } y_{\de, nk}^1=(y_\de^1,\Phi_n^{(1)}\Phi_{\de,k}^{(2)})_{L^2(\Om_\de)}, \mbox{ for all } n,k\in\N^*. 
\end{equation*}
The weak solution of \eqref{08.04.7} is given by
\begin{equation}\label{08.05.10}
	\begin{aligned}
		y_\delta(x,t)
		&=
		\sum_{n,k=1}^{\infty}
		\left(
		y_{\delta,nk}^0
		\cos\left(\sqrt{\lambda_{nk}^{\delta}}t\right)
		+y_{\delta,nk}^1
		\frac{1}
		{\sqrt{\lambda_{nk}^{\delta}}}
		\sin\left(\sqrt{\lambda_{nk}^{\delta}}t\right)
		\right)
		\Phi_n^{(1)}(x_1)\Phi_{\delta,k}^{(2)}(x_2).
	\end{aligned}
\end{equation}

We conclude this subsection by noting that, when $\delta=0$, the notation introduced above reduces to that used in Section \ref{S2}. For example, $\mathcal{A}_0=\mathcal{A}$ and $w_0=w$, etc.

\subsection{Spectral approximation of  $\mcA^{(2)}$}

For each $0\leq \de_1\leq \de_2\leq \f{1}{4}$, if $u_{\de_2}\in H_0^1(\La_2^{\de_2};w_{\de_2})$, define 
\begin{equation}\label{08.04.8}
	E_{\de_2}^{\de_1}u_{\de_2}(x)=
	\begin{cases}
		u_{\de_2}(x), &x\in \La_2^{\de_2},\\
		0, &x\in \La_2^{\de_1}-\La_2^{\de_2}, 
	\end{cases} 
\end{equation}
and $E_0^0 u=u$ for all $u\in H_0^1(\Om;w)$. 
The extension $E_{\delta_2}^{\delta_1}u_{\delta_2}$ belongs to
$H_0^1(\Lambda_2^{\delta_1};w_{\delta_1})$,   and $\|E_{\de_2}^{\de_1}u_{\de_2}\|_{H_0^1(\La_2^{\de_1};w_{\de_1})}=\|u_{\de_2}\|_{H_0^1(\La_2^{\de_2};w_{\de_2})}$. 

Now, for each $\de\in [0,\f{1}{4}]$, define
\begin{equation}\label{08.04.11}
	r_{\de}: L^2(\La_2)\ra L^2(\La_2), \ f\mapsto r_\de (f)=E_{\de}^0u_\de^f, 
\end{equation}
where  $u_\delta^f$ denotes the weak solution of 
\begin{equation}\label{08.04.10}
	\begin{cases}
		\mcA_\de^{(2)} u_\de^f=f|_{\La_2^\de}, &x\in \La_2^\de, \\
		u_\de^f=0, &x\in \pt\La_2^\de. 
	\end{cases}
\end{equation}
Then, for each $\de\in [0,\f{1}{4}]$,  from $u_\de^f\in H_0^1(\La_2^\de;w_\de)$, we get $E_\de^0u_\de^f\in H_0^1(\La_2;w)$, and 
\begin{equation*}
	\begin{split}
		\|r_\de(f)\|_{L^2(\La_2)}
		&=\|E_\de^0 u_\de^f\|_{L^2(\La_2)}\leq C\|E_\de^0u_\de^f\|_{H_0^1(\La_2;w)}= C\|u_\de^f\|_{H_0^1(\La_2^\de;w_\de)}\leq  C\|f\|_{L^2(\La_2)}
	\end{split}
\end{equation*}
by Remark \ref{08.04.R2} and $\|u_\de ^f\|_{H_0^1(\La_2^\de;w_\de)}\leq C\|f|_{\La_2^\de}\|_{L^2(\La_2^\de)}\leq C\|f\|_{L^2(\La_2)}$, where the positive constants $C$ depends only on $\al$. Moreover, since
$
r_\delta:L^2(\Lambda_2)\rightarrow L^2(\Lambda_2)
$
is bounded and the embedding
$
H_0^1(\Lambda_2;w)\hookrightarrow L^2(\Lambda_2)
$
is compact, $r_\delta$ is compact. Hence, $r_\de\ (0\leq \de\leq \f{1}{4})$ is a compact, self-adjoint and nonnegative operator on $L^2(\La_2)$.

\begin{lemma}\label{08.04.L12}
	Let $f\in L^2(\Om)$. We have 
	\begin{equation*}
		r_\de(f)\ra r_0(f) \mbox{ strongly in } H_0^1(\La_2;w) \mbox{ as } \de\ra 0^+. 
	\end{equation*}
\end{lemma}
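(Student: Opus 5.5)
The idea is that $r_\de(f)$ is the Galerkin (Ritz) projection of $r_0(f)$ onto the closed subspace
$$V_\de:=E_\de^0\bigl(H_0^1(\La_2^\de;w_\de)\bigr)\subseteq H_0^1(\La_2;w).$$
We therefore reduce the claimed convergence to the density of $\bigcup_\de V_\de$ in $H_0^1(\La_2;w)$. Here $f$ is understood as an element of $L^2(\La_2)$, the domain of $r_\de$. Throughout we use the equivalent norm of Remark \ref{08.04.R2},
$$\|u\|^2=\int_{\La_2} w(\pt_{x_2}u)^2\df x_2,$$
with associated inner product $a(u,v)=\int_{\La_2} w\,\pt_{x_2}u\,\pt_{x_2}v\df x_2$.

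\textbf{Step 1: Galerkin orthogonality.} Set $u=r_0(f)$ and $u^\de=r_\de(f)$. The weak formulation of \eqref{08.04.10} on $\La_2^\de$ reads
$$\int_{\La_2^\de} w\,\pt_{x_2}u_\de^f\,\pt_{x_2}\varphi\df x_2=\int_{\La_2^\de} f\varphi\df x_2\quad\text{for all }\varphi\in H_0^1(\La_2^\de;w_\de).$$
Since $E_\de^0$ preserves the energy norm and $E_\de^0\varphi$ vanishes on $(0,\de]$, this becomes
$$a(u^\de,v)=(f,v)_{L^2(\La_2)}\quad\text{for all }v\in V_\de.$$
The same identity holds for $u$ and every $v\in H_0^1(\La_2;w)$. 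Subtracting, we get $a(u-u^\de,v)=0$ for all $v\in V_\de$. Hence $u^\de$ is the $a$-orthogonal projection of $u$ onto $V_\de$, and by C\'ea's lemma
$$\|u-u^\de\|=\inf_{v\in V_\de}\|u-v\|.$$

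\textbf{Step 2: monotonicity.} The spaces $V_\de$ increase as $\de$ decreases. It therefore suffices to show that for every $\e>0$ there are $\de_0>0$ and $v\in V_{\de_0}$ with $\|u-v\|<\e$. Then the bound $\|u-u^\de\|<\e$ holds for all $\de\le\de_0$.

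\textbf{Step 3: density.} By definition, $H_0^1(\La_2;w)$ is the closure of $C_0^\infty(\La_2)$ in this norm. So we may choose $\varphi\in C_0^\infty(\La_2)$ with $\|u-\varphi\|<\e$. Its support lies in some $[\de_0,1-\de_0]$, so $\varphi|_{\La_2^{\de_0}}\in H_0^1(\La_2^{\de_0};w_{\de_0})$ and $\varphi=E_{\de_0}^0(\varphi|_{\La_2^{\de_0}})\in V_{\de_0}$. This completes the argument; the strong convergence in the energy norm is equivalent to convergence in $H_0^1(\La_2;w)$ by Remark \ref{08.04.R2}.

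\textbf{Main obstacle.} The step needing care is the identification of $V_\de$ with an honest subspace of $H_0^1(\La_2;w)$. This means checking that $E_\de^0$ maps $H_0^1(\La_2^\de;w_\de)$ isometrically into $H_0^1(\La_2;w)$, which the paper asserts right after \eqref{08.04.8}, and that the weak formulations match. The matching needs the zero-extended test functions to be admissible on $\La_2$ and $f$ to be tested only on $\La_2^\de$. Both are immediate once the isometry is granted, so no further difficulty is expected.
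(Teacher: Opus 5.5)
Your proof is correct, but it takes a different route from the paper's.

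\textbf{The paper's route.} The paper argues by compactness. Testing \eqref{08.04.10} with $u_\delta^f$ gives the uniform bound $\|r_\delta(f)\|_{H_0^1(\Lambda_2;w)}\le C\|f\|_{L^2(\Lambda_2)}$. A subsequence then converges weakly in $H_0^1(\Lambda_2;w)$ and strongly in $L^2(\Lambda_2)$, using Lemma \ref{08.04.L5}. The limit is identified as $r_0(f)$ by testing against $v\in C_0^\infty(\Lambda_2)$, which is admissible for all small $\delta$. Strong convergence then follows from convergence of the energies, $\int_{\Lambda_2} w(\partial_{x_2}r_\delta(f))^2\,dx_2=\int_{\Lambda_2} f\,r_\delta(f)\,dx_2\to\int_{\Lambda_2} f\,r_0(f)\,dx_2$.

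\textbf{Your route.} You observe that the spaces $V_\delta$ are nested closed subspaces of $H_0^1(\Lambda_2;w)$ carrying the same bilinear form. Hence $r_\delta(f)$ is exactly the energy-orthogonal projection $P_\delta r_0(f)$, and the lemma reduces to density of $\bigcup_\delta V_\delta$, which contains $C_0^\infty(\Lambda_2)$.

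\textbf{What your approach buys.} It needs no compact embedding and no subsequence extraction. The paper has to pass from a subsequence to the whole family via uniqueness of the limit, which it leaves implicit in this lemma. Your argument also shows that $\|r_0(f)-r_\delta(f)\|$ is nonincreasing as $\delta\downarrow 0$, and identifies this error as a best-approximation error, which could in principle be quantified.

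It also upgrades cheaply to operator-norm convergence. You have $\|I-P_\delta\|\le 1$ and $I-P_\delta\to 0$ strongly, by your Step 3 applied to arbitrary $u$. Moreover $r_0:L^2(\Lambda_2)\to H_0^1(\Lambda_2;w)$ is compact, since it maps $\Phi_k^{(2)}$ to mutually orthogonal vectors of norm $(\lambda_k^{(2)})^{-1/2}\to 0$. Together these give $\|r_\delta-r_0\|_{\mathcal{L}(L^2(\Lambda_2),H_0^1(\Lambda_2;w))}\to 0$. This already contains \eqref{08.05.3}, which the paper proves separately by contradiction in Lemma \ref{08.04.L10}.

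\textbf{What the paper's approach buys.} It uses neither nestedness nor the fact that the approximating problems share one form. It therefore still works if the approximation is done by regularizing coefficients instead of truncating the domain. It is also the same template the paper reuses, with varying data $f_n$, in Lemma \ref{08.04.L10}.
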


\begin{proof}
	Testing \eqref{08.04.10} by $u_\delta^f$, we obtain
	\begin{equation*}
		\begin{split}
			\|r_\de(f)\|_{H_0^1(\La_2;w)}^2
			&=\int_{\La_2}w (\pt_{x_2} E_\de^0 u_\de^f)^2\df x_2=\int_{\La_2^\de} w_\de (\pt_{x_2}u_\de^ f)^2\df x_2=\int_{\La_2^\de}fu_\de^f\df x_2\\
			&\leq \|f\|_{L^2(\La_2^\de)}\|u_\de^f\|_{L^2(\La_2^\de)}=\|f\|_{L^2(\La_2^\de)}\|E_\de^0u_\de^f\|_{L^2(\La_2)}\\
			&\leq C\|f\|_{L^2(\La_2)}\|E_\de^0 u_\de^f\|_{H_0^1(\La_2;w)}=C\|f\|_{L^2(\La_2)}\|r_\de(f)\|_{H_0^1(\La_2;w)} 
		\end{split}
	\end{equation*}
	by \eqref{08.04.10} and Remark \ref{08.04.R2}, then 
	\begin{equation*}
		\|r_\de(f)\|_{H_0^1(\La_2;w)}\leq C\|f\|_{L^2(\La_2)}, 
	\end{equation*}
	where the positive constants $C$ depends only on $\al$. By the compact embedding in Lemma \ref{08.04.L5},  there exists a subsequence of $\{r_\de(f)\}_{0<\de\leq \f{1}{4}}$, still denoted by itself, and $u_0\in H_0^1(\La_2;w)$ such that 
	\begin{equation}\label{08.04.12}
		r_\de(f)\ra u_0 \mbox{ weakly in } H_0^1(\La_2;w), \mbox{ and } r_\de(f)\ra u_0 \mbox{ strongly in } L^2(\La_2). 
	\end{equation} 
	
	Now, for each $v\in C_0^\iy(\La_2)$, for all $0<\de< \dist(\supp v, \{0\})$, we have 
	\begin{equation}\label{08.05.1}
		\int_{\La_2} w(\pt_{x_2} r_\de(f))(\pt_{x_2}v)\df x_2=\int_{\La_2^\de} w_\de (\pt_{x_2}u_\de^f)(\pt_{x_2}v)\df x_2=\int_{\La_2^\de} fv\df x_2=\int_{\La_2} f v\df x_2. 
	\end{equation}
	Together with this and \eqref{08.04.12}, we obtain 
	\begin{equation*}
		\int_{\La_2} w(\pt_{x_2}u_0)(\pt_{x_2}v)\df x_2=\int_{\La_2}fv\df x_2. 
	\end{equation*}
	Since $C_0^\infty(\Lambda_2)$ is dense in $H_0^1(\Lambda_2;w)$, this identity characterizes the weak solution of \eqref{08.04.10} for $\delta=0$: 
	\begin{equation}\label{08.04.13}
		u_0=r_0(f). 
	\end{equation}
	
	Finally, from the second part of  \eqref{08.04.12} and \eqref{08.05.1} and \eqref{08.04.13}, by density, the weak formulation extends to all 
	$v\in H_0^1(\Lambda_2;w)$, taking $v=r_\delta(f)$ and $v=r_0(f)$ respectively, we obtain  
	\begin{equation*}
		\begin{split}
			\int_{\La_2} w(\pt_{x_2}r_\de(f))^2\df x_2
			&=\int_{\La_2} fr_\de(f)\df x_2\ra \int_{\La_2} fr_0(f)\df x_2=\int_{\La_2} w(\pt_{x_2}r_0(f))^2\df x_2. 
		\end{split}
	\end{equation*}
	Together with this and the first part of  \eqref{08.04.12}, since $H_0^1(\Lambda_2;w)$ is a Hilbert space,  weak convergence together with convergence of norms implies strong convergence,  we obtain 
	\begin{equation*}
		r_\de(f)\ra r_0(f) \mbox{ strongly in } H_0^1(\La_2;w). 
	\end{equation*}
	We complete the proof of this lemma. 
\end{proof}

\begin{remark}
	Lemma \ref{08.04.L12} can be viewed as a resolvent convergence result, which is closely related to the $\Gamma$-convergence of the associated quadratic forms.
\end{remark}

\begin{lemma}[{\cite[Corollaries XI.9.3 and XI.9.4]{Dunford}}]\label{08.04.L11}
	Let $r_1,r_2$ be compact, self-adjoint and nonnegative operators on $H$. For every $m,n\geq 1$ we have 
	\begin{equation*}
		|\xi_n(r_1)-\xi_n(r_2)|\leq \|r_1-r_2\|_{\mcL(H)},  
	\end{equation*}
	where $\xi_n(r_i)$ is the $n$th eigenvalue of $r_i$ for $i=1,2$. 
\end{lemma}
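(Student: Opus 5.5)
We will prove this statement with the Courant--Fischer max--min characterization of the eigenvalues of a compact self-adjoint nonnegative operator. The key point is that this characterization is monotone in the quadratic form $(rx,x)$, and a perturbation of norm $\varepsilon$ moves that form by at most $\varepsilon$ on the unit sphere.

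\emph{Convention.} For a compact, self-adjoint, nonnegative operator $r$ on the Hilbert space $H$, we let $\xi_1(r)\ge \xi_2(r)\ge\cdots\ge 0$ denote its positive eigenvalues, listed in nonincreasing order and repeated according to multiplicity. If $r$ has only finitely many positive eigenvalues, we pad the list with zeros. This is the ordering under which the estimate is meaningful. For the operators $r_\delta$ of \eqref{08.04.11} it corresponds to $\xi_n(r_\delta)=1/\lambda_{\delta,n}^{(2)}$.

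\emph{Step 1: max--min formula.} We show that
\begin{equation*}
\xi_n(r)=\sup_{\substack{V\subseteq H\\ \dim V=n}}\ \inf_{\substack{x\in V\\ \|x\|=1}} (rx,x).
\end{equation*}
By the spectral theorem for compact self-adjoint operators there is an orthonormal family $\{e_j\}$ with $re_j=\xi_j(r)e_j$, and $r$ vanishes on the orthogonal complement of this family.

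For the lower bound ``$\ge$'' direction of the sup, take $V=\mathrm{span}\{e_1,\dots,e_n\}$. If fewer than $n$ such vectors exist, complete $V$ with unit vectors from $\ker r$; this is possible whenever $\dim H\ge n$. On this $V$ we have $(rx,x)\ge \xi_n(r)\|x\|^2$.

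For the upper bound, let $V$ be any $n$-dimensional subspace. By a dimension count it contains a unit vector $x$ orthogonal to $e_1,\dots,e_{n-1}$. Expanding $x$ in the eigenbasis gives $(rx,x)=\sum_{j\ge n}\xi_j(r)|(x,e_j)|^2\le \xi_n(r)$. Hence the infimum over $V$ is at most $\xi_n(r)$.

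\emph{Step 2: comparison.} Set $\varepsilon=\|r_1-r_2\|_{\mathcal L(H)}$. For every unit vector $x$,
\begin{equation*}
(r_1x,x)=(r_2x,x)+((r_1-r_2)x,x)\le (r_2x,x)+\varepsilon .
\end{equation*}
Take the infimum over unit $x\in V$ and then the supremum over $n$-dimensional $V$. Using Step 1 this gives $\xi_n(r_1)\le\xi_n(r_2)+\varepsilon$. Exchanging the roles of $r_1$ and $r_2$ gives $\xi_n(r_2)\le\xi_n(r_1)+\varepsilon$, so $|\xi_n(r_1)-\xi_n(r_2)|\le \varepsilon$. The index $m$ appearing in the statement plays no role in the argument.

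\emph{Main obstacle.} The only delicate step is Step 1 in degenerate situations, namely when $r$ has a nontrivial kernel or only finitely many positive eigenvalues. These cases are exactly why the zero-padding convention is needed. We handle them by splitting $H=\overline{\mathrm{span}}\{e_j\}\oplus\ker r$ and checking that zeros contribute correctly in both directions of the max--min argument. Everything else is the elementary one-line comparison in Step 2.
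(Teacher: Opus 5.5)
Your argument is correct. The paper gives no proof of its own; it only cites Dunford--Schwartz, where the estimate is obtained as you do: from the Courant--Fischer min--max characterization of the positive eigenvalues (ordered nonincreasingly and padded with zeros), combined with the bound $(r_1x,x)\le (r_2x,x)+\|r_1-r_2\|$ on unit vectors. Your ordering convention is also the one the paper needs when it applies the lemma with $\xi_n(r_\delta)=1/\lambda^{(2)}_{\delta,n}$. There $r_\delta$ has an infinite-dimensional kernel (the functions vanishing on $(\delta,1)$), so the degenerate cases you flag actually occur, and your proof handles them correctly.
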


\begin{lemma}\label{08.04.L10}
	For each $k\in\N^*$, we have 
	\begin{equation*}
		\la_{\de,k}^{(2)}\dra  \la_k^{(2)} \mbox{ as } \de\ra 0^+. 
	\end{equation*}
\end{lemma}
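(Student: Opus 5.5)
The plan has two parts: monotonicity of $\delta\mapsto\lambda_{\delta,k}^{(2)}$, which gives the arrow $\downarrow$, and convergence of these eigenvalues to $\lambda_k^{(2)}$.

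\textbf{Monotonicity.} We use the Courant--Fischer min--max characterization
\begin{equation*}
\lambda_{\delta,k}^{(2)}=\min_{\substack{V\subseteq H_0^1(\Lambda_2^\delta;w_\delta)\\ \dim V=k}}\ \max_{0\neq u\in V}\frac{\int_{\Lambda_2^\delta} w_\delta(\partial_{x_2}u)^2\,\mathrm d x_2}{\int_{\Lambda_2^\delta}u^2\,\mathrm d x_2}.
\end{equation*}
For $0\le\delta_1\le\delta_2\le\frac14$, the zero extension $E_{\delta_2}^{\delta_1}$ of \eqref{08.04.8} maps $H_0^1(\Lambda_2^{\delta_2};w_{\delta_2})$ isometrically into $H_0^1(\Lambda_2^{\delta_1};w_{\delta_1})$. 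It also preserves the $L^2$ norm, so it preserves the Rayleigh quotient. Admissible $k$-dimensional subspaces for $\delta_2$ therefore become admissible for $\delta_1$, which gives $\lambda_{\delta_1,k}^{(2)}\le\lambda_{\delta_2,k}^{(2)}$. Hence $\delta\mapsto\lambda_{\delta,k}^{(2)}$ is nondecreasing on $[0,\frac14]$ and bounded below by $\lambda_k^{(2)}$ (the case $\delta=0$). The limit as $\delta\to0^+$ thus exists, is approached monotonically from above, and is at least $\lambda_k^{(2)}$.

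\textbf{Convergence.} We pass through the compact operators $r_\delta$ of \eqref{08.04.11}. The nonzero eigenvalues of $r_\delta$ are exactly $1/\lambda_{\delta,k}^{(2)}$, with eigenfunctions $E_\delta^0\Phi_{\delta,k}^{(2)}$. On the orthogonal complement $L^2(\Lambda_2\setminus\Lambda_2^\delta)$, which is infinite-dimensional, $r_\delta$ vanishes, so it contributes only the eigenvalue $0$. Ordering the positive eigenvalues decreasingly, we get $\xi_k(r_\delta)=1/\lambda_{\delta,k}^{(2)}$, and Lemma~\ref{08.04.L11} yields
\begin{equation*}
\Bigl|\frac{1}{\lambda_{\delta,k}^{(2)}}-\frac{1}{\lambda_k^{(2)}}\Bigr|\le\|r_\delta-r_0\|_{\mathcal L(L^2(\Lambda_2))}.
\end{equation*}
It therefore suffices to show $\|r_\delta-r_0\|_{\mathcal L(L^2(\Lambda_2))}\to0$.

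\textbf{Main obstacle: norm convergence.} Lemma~\ref{08.04.L12} only gives strong (pointwise) convergence, so it must be upgraded. We argue by contradiction. Suppose there are $\delta_j\to0$ and $\|f_j\|_{L^2}\le1$ with $\|(r_{\delta_j}-r_0)f_j\|_{L^2}\ge\varepsilon$. Pass to a subsequence with $f_j\rightharpoonup f$ weakly in $L^2(\Lambda_2)$, and split
\begin{equation*}
(r_{\delta_j}-r_0)f_j=(r_{\delta_j}-r_0)f+r_{\delta_j}(f_j-f)-r_0(f_j-f).
\end{equation*}
The first term tends to $0$ by Lemma~\ref{08.04.L12}. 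The last tends to $0$ because $r_0$ is compact. For the middle term, set $g_j=f_j-f$, so $g_j\rightharpoonup0$. The bound in the proof of Lemma~\ref{08.04.L12} gives $\|r_{\delta_j}(g_j)\|_{H_0^1(\Lambda_2;w)}\le C$ uniformly, so by Lemma~\ref{08.04.L5} a subsequence converges strongly in $L^2$ and weakly in $H_0^1(\Lambda_2;w)$ to some $u$. Testing against $v\in C_0^\infty(\Lambda_2)$ exactly as in \eqref{08.05.1}, for $\delta_j<\operatorname{dist}(\operatorname{supp}v,\{0\})$ we obtain
\begin{equation*}
\int_{\Lambda_2} w\,\partial_{x_2}u\,\partial_{x_2}v\,\mathrm d x_2=\lim_{j\to\infty}\int_{\Lambda_2} g_jv\,\mathrm d x_2=0,
\end{equation*}
so $u=0$ by density. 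This contradicts the assumed gap $\varepsilon$.

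\textbf{Conclusion.} Norm convergence gives $1/\lambda_{\delta,k}^{(2)}\to1/\lambda_k^{(2)}$, and combining with the monotonicity step, $\lambda_{\delta,k}^{(2)}\downarrow\lambda_k^{(2)}$. The delicate points are the uniform compactness argument just given and the bookkeeping identifying $\xi_k(r_\delta)$ with $1/\lambda_{\delta,k}^{(2)}$ despite the zero eigenvalue. The latter is harmless since only positive eigenvalues are ordered.
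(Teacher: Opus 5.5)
Your proposal is correct and follows the paper's proof essentially step by step. Monotonicity comes from the min--max characterization with zero extension. Convergence comes from Lemma~\ref{08.04.L11} applied to $r_\delta$, together with a contradiction/compactness argument for $\|r_\delta-r_0\|_{\mathcal L(L^2(\Lambda_2))}\to0$: a weakly convergent subsequence of $f_n$, a uniform $H_0^1(\Lambda_2;w)$ bound, and identification of the limit by testing against $v\in C_0^\infty(\Lambda_2)$.

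The differences are cosmetic. You split off $(r_{\delta_j}-r_0)f$ and invoke Lemma~\ref{08.04.L12}, whereas the paper identifies the limit of $r_{\delta_n}(f_n)$ directly as $r_0(f)$. You also spell out the identification $\xi_k(r_\delta)=1/\lambda_{\delta,k}^{(2)}$, which the paper leaves implicit.
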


\begin{proof}
	We prove this lemma by the following steps. 
	
	{\it Step 1}. 
	It is well-known that 
	\begin{equation}\label{08.04.9}
		\begin{split}
			\la_{\de,k}^{(2)}
			&=\min_{0\neq v\in H_0^1(\La_2^\de;w_\de)\atop (v, \Phi_{\de,i}^{(2)})_{L^2(\La_2^\de)}=0, i=1,\cdots, k-1}R_\de[v]=\min_{E_k\s H_0^1(\La_2^\de;w_\de)\atop \dim E_k=k}\max_{0\neq v\in E_k}R_\de[v], 
		\end{split}
	\end{equation}
	where 
	\begin{equation*}
		R_\de[v]=\f{\int_{\La_2^\de}w_{\de}(\pt_{x_2}v)^2\df x}{\int_{\La_2^\de}v^2\df x }. 
	\end{equation*}

	{\it Step 2}. The monotonicity with respect to the domain inclusion follows from the min-max characterization. i.e., we prove   $\la_{\de_2,k}^{(2)}\leq \la_{\de_1,k}^{(2)}$ for all $0\leq \de_2\leq \de_1\leq \f{1}{4}$, where $\la_{0,k}^{(2)}=\la_k^{(2)}$.  
	
	For each $E_k\s H_0^1(\Om_{\de_1};w_{\de_1})$ with $\dim E_k=k$, we get $E_k\s H_0^1(\Om_{\de_2};w_{\de_2})$ by \eqref{08.04.8}, then 
	\begin{equation}\label{08.07.1}
		\la_{\de_2,k}^{(2)}\leq \la_{\de_1,k}^{(2)}
	\end{equation}
	by \eqref{08.04.9}. 
	
	{\it Step 3}. We prove $\la_{\de,k}^{(2)}$ is continuous at  $\de=0$. i.e., $\la_{\de,k}^{(2)}\ra \la_k^{(2)}$ as $\de\ra 0^+$. 
	
	From Lemma \ref{08.04.L11}, if we have proved 
	\begin{equation}\label{08.05.3}
		\|r_\de-r_0\|_{\mcL(L^2(\La_2))}\ra 0, 
	\end{equation}
	then 
	\begin{equation*}
		\left|\left[\la_{\de,k}^{(2)}\right]^{-1}-\left[\la_{k}^{(2)}\right]^{-1}\right|\leq \|r_\de-r_0\|_{\mcL(L^2(\La_2))}\ra 0. 
	\end{equation*}
	Which shows that 
	\begin{equation*}
		\left|\la_{\de,k}^{(2)}-\la_k^{(2)}\right|\leq \la_{\de,k}^{(2)}\la_k^{(2)}\|r_\de-r_0\|_{\mcL(L^2(\La_2))}\leq \left[\la_{\f{1}{4},k}^{(2)}\right]^2\|r_\de-r_0\|_{\mcL(L^2(\La_2))}\ra 0
	\end{equation*}
	as $\de\ra 0^+$ by Step 2. 
	
	Now, we prove \eqref{08.05.3}. 
	
	We argue by contradiction. Then there exists $\e_0>0$ such that there exists a subsequence $\{r_{\de_n}\}_{n\in\N^*}$ of $\{r_\de\}_{0<\de<\f{1}{4}}$ such that 
	\begin{equation*}
		\|r_{\de_n}-r_0\|_{\mcL(L^2(\La_2))}\geq \e_0. 
	\end{equation*}
	Hence, there exists $f_n \ (n\in\N^*)$ with $\|f_n\|_{L^2(\La_2)}\leq 1$ such that 
	\begin{equation}\label{08.05.5}
		\|r_{\de_n}(f_n)-r_0(f_n)\|_{L^2(\La_2)}\geq \f{1}{2}\e_0.
	\end{equation}
	From the boundedness of $\{f_n\}_{n\in\N^*}$, then there exists a subsequence of $\{f_n\}_{n\in\N^*}$, still denoted by itself, and $f\in L^2(\La_2)$ such that $f_n\ra f$ weakly in $L^2(\La_2)$. Note that 
	\begin{equation*}
		\|r_{\de_n}(f_n)\|_{H_0^1(\La_2;w)}\leq C\|f_n\|_{L^2(\La_2)}\leq C, 
	\end{equation*}
	where the positive constants $C$ depends only on $\al$, then there exists a subsequence of $\{r_{\de_n}(f_n)\}_{n\in\N^*}$, still denoted by itself, and $z_0\in H_0^1(\La_2;w)$ such that 
	\begin{equation}\label{08.05.4}
		r_{\de_n}(f_n)\ra z_0 \mbox{ weakly in } H_0^1(\La_2;w) \mbox{ and } r_{\de_n}(f_n)\ra z_0 \mbox{ strongly in } L^2(\La_2)   \mbox{ as } n\ra\iy. 
	\end{equation}
	For each $v\in C_0^\iy(\La_2)$, when $\de_n\in (0, \dist(\supp v, \{0\}))$, from \eqref{08.05.4}, we have 
	\begin{equation*}
		\begin{split}
		\int_{\La_2} w(\pt_{x_2} z_0) \pt_{x_2}v\df x_2\leftarrow\int_{\La_2}w(\pt_{x_2}r_{\de_n}(f_n))\pt_{x_2}v\df x_2
		&=\int_{\La_2^\de} w_\de (\pt_{x_2}u_n^{f_n})\pt_{x_2}v\df x_2=\int_{\La_2^\de}f_n v\df x_2\\
		&=\int_{\La_2}f_nv\df x_2\ra \int_{\La_2} fv\df x_2, 
		\end{split} 
	\end{equation*}
	hence 
	\begin{equation}\label{08.05.6}
		z_0=u_0^f=r_0(f). 
	\end{equation}
	
	Finally, since  $\|r_0(f_n)\|_{H_0^1(\La_2;w)}\leq C\|f_n\|_{L^2(\La_2)}$ for all $n\in\N$, where the positive constant $C$ depending only on $\al$, there exists a subsequence of $\{r_0(f_n)\}_{n\in\N^*}$, still denoted by itself, such that 
	\begin{equation*}
		r_0(f_n)\ra r_0(f) \mbox{ weakly in } H_0^1(\La_2;w) \mbox{ and } r_0(f_n)\ra r_0(f) \mbox{ strongly in }L^2(\La_2)  \mbox{ as } n\ra\iy
	\end{equation*}
	by $f_n\ra f$ weakly in $L^2(\La_2)$ and 
	\begin{equation*}
		\int_{\La_2} w (\pt_{x_2}r_0(f))\pt_{x_2}v\df x\leftarrow\int_{\La_2}w(\pt_{x_2} r_0(f_n))\pt_{x_2}v\df x=\int_{\La_2} f_n v\df x_2\ra \int_{\La_2} fv\df x_2. 
	\end{equation*}
	Combining with this and \eqref{08.05.4}  and \eqref{08.05.6}, we obtain  
	\begin{equation*}
		\|r_{\de_n}(f_n)-r_0(f_n)\|_{L^2(\Lambda_2)}
		\leq
		\|r_{\de_n}(f_n)-r_0(f)\|_{L^2(\Lambda_2)}
		+
		\|r_0(f)-r_0(f_n)\|_{L^2(\Lambda_2)}
		 \rightarrow0, 
	\end{equation*}
	this contradicts \eqref{08.05.5}. 
\end{proof}

\begin{lemma}\label{08.05.L1}
	For each $k\in\N^*$, we have 
	\begin{equation*}
		E_\de^0\Phi_{\de,k}^{(2)}\ra \Phi_k ^{(2)} \mbox{ strongly in } H_0^1(\La_2;w) \mbox{ as }\de\ra 0^+. 
	\end{equation*}
\end{lemma}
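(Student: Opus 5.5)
The plan is to work with the extended eigenfunctions $\psi_\de:=E_\de^0\Phi_{\de,k}^{(2)}\in H_0^1(\La_2;w)$. Two facts from the extension properties after \eqref{08.04.8} drive the argument: $\|\psi_\de\|_{L^2(\La_2)}=1$ and
\begin{equation*}
\|\psi_\de\|_{H_0^1(\La_2;w)}^2=\la_{\de,k}^{(2)}\leq \la_{\f14,k}^{(2)},
\end{equation*}
where the inequality is the monotonicity from Step 2 of Lemma \ref{08.04.L10}. This bound is uniform in $\de$. By Lemma \ref{08.04.L5}, every sequence $\de_n\ra0^+$ therefore has a subsequence along which $\psi_{\de_n}\ra\psi$ weakly in $H_0^1(\La_2;w)$ and strongly in $L^2(\La_2)$. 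In particular the limit satisfies $\|\psi\|_{L^2(\La_2)}=1$.

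\textbf{Identifying the limit.} For $v\in C_0^\iy(\La_2)$ and $\de_n<\dist(\supp v,\{0\})$, the weak form of the eigenproblem on $\La_2^{\de_n}$ reads
\begin{equation*}
\int_{\La_2}w\,\pt_{x_2}\psi_{\de_n}\pt_{x_2}v\,\df x_2=\la_{\de_n,k}^{(2)}\int_{\La_2}\psi_{\de_n}v\,\df x_2 .
\end{equation*}
We pass to the limit using weak convergence on the left, and Lemma \ref{08.04.L10} together with strong $L^2$ convergence on the right. By density this gives $\mcA^{(2)}\psi=\la_k^{(2)}\psi$ weakly, with $\psi\in H_0^1(\La_2;w)$ and $\|\psi\|_{L^2}=1$.

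\textbf{Strong convergence.} The norms converge: $\|\psi_{\de_n}\|_{H_0^1(\La_2;w)}^2=\la_{\de_n,k}^{(2)}\ra\la_k^{(2)}=\|\psi\|_{H_0^1(\La_2;w)}^2$. Combined with weak convergence in the Hilbert space $H_0^1(\La_2;w)$, this yields strong convergence, exactly as at the end of Lemma \ref{08.04.L12}.

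\textbf{Main obstacle: the sign.} The spectrum of $\mcA^{(2)}$ is simple (Notation \ref{08.04.N3}), so the eigenspace is one-dimensional. The limit equation then forces $\psi=\pm\Phi_k^{(2)}$. The difficulty is to exclude $-\Phi_k^{(2)}$ and to make the limit independent of the subsequence, which is what gives convergence of the whole family. The statement implicitly presupposes a normalization, so I would fix the sign of $\Phi_{\de,k}^{(2)}$ by requiring
\begin{equation*}
(E_\de^0\Phi_{\de,k}^{(2)},\Phi_k^{(2)})_{L^2(\La_2)}\geq0 .
\end{equation*}
Equivalently, one can fix the sign of the derivative at $x_2=1$, which is nonzero by ODE uniqueness and matches the sign convention implicit in Remark \ref{08.05.R1}. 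Under the first normalization, passing to the limit gives $(\psi,\Phi_k^{(2)})\geq0$, which rules out $\psi=-\Phi_k^{(2)}$ and leaves $\psi=\Phi_k^{(2)}$. Since every sequence $\de_n\ra0^+$ has a subsequence converging to the same limit $\Phi_k^{(2)}$, the full family converges as $\de\ra0^+$.

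If one prefers to avoid the sign convention, there is an alternative route: use the spectral projections. The norm convergence $\|r_\de-r_0\|\ra0$ from \eqref{08.05.3}, together with the isolated simple eigenvalue, implies convergence of the rank-one projections $P_\de\ra P_0$ via the Riesz contour integral. From this one gets $\psi_\de\otimes\psi_\de\ra\Phi_k^{(2)}\otimes\Phi_k^{(2)}$, and the sign convention then selects $\Phi_k^{(2)}$.
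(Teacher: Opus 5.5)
Your proposal is correct and follows the paper's proof essentially step for step. The common steps are the uniform bound $\la_{\de,k}^{(2)}\le\la_{\frac14,k}^{(2)}$, weak/compact extraction of a subsequence, passage to the limit in the weak eigen-equation against $v\in C_0^\iy(\La_2)$, simplicity of $\la_k^{(2)}$, strong convergence from weak convergence plus convergence of norms $\la_{\de_n,k}^{(2)}\ra\la_k^{(2)}$, and the subsequence principle for the whole family. Your sign normalization $(E_\de^0\Phi_{\de,k}^{(2)},\Phi_k^{(2)})_{L^2(\La_2)}\geq0$ fills a point the paper passes over: it concludes $u_k=\Phi_k^{(2)}$ from simplicity alone, which only gives $u_k=\pm\Phi_k^{(2)}$.
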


\begin{proof}
	Since   $\Lambda_2^\delta=(\delta,1)$  and  $x_2^\alpha>0$
	on  $\overline{\Lambda_2^\delta}$ for all $\de\in (0,\f{1}{4}]$, 
	the one-dimensional Sturm-Liouville theory implies that 
	$\lambda_{\delta,k}^{(2)}$
	is simple.
	
	From \eqref{08.07.1}, we obtain 
	\begin{equation*}
		\int_{\La_2} w\left(\pt_{x_2}E_\de^0\Phi_{\de,k}^{(2)}\right)^2\df x_2 =\int_{\La_2^\de} w_\de \left(\pt_{x_2}\Phi_{\de,k}^{(2)}\right)^2\df x_2=\la_{\de,k}^{(2)}\leq \la_{\f{1}{4},k}^{(2)}, 
	\end{equation*}
	there exist a subsequence $\{\de_n\}_{n\in\mathbb N^*}$ with $\de_n\to0^+$ and a function $u_k\in H_0^1(\Lambda_2;w)$ such that
	\begin{equation}\label{08.05.8}
		\begin{split} 
		E_\de^0\Phi_{\de_n,k}^{(2)}
		&\ra u_k \mbox{ weakly in } H_0^1(\La_2;w), \mbox{ and }\\ E_\de^0\Phi_{\de_n,k}^{(2)}
		&\ra u_k \mbox{ strongly in } L^2(\La_2),  \mbox{ as } n\ra\iy.
		\end{split}  
	\end{equation}

	Let $v\in C_0^\infty(\Lambda_2)$. Since $\supp v$ is compactly contained in $\Lambda_2$, there exists $\delta_v>0$ such that $\supp v\subset \Lambda_2^\de$ for every $0<\de<\delta_v$, from Lemma \ref{08.04.L10} and the second part of \eqref{08.05.4}, we obtain 
	\begin{equation*}
		\begin{split} 
		\int_{\La_2}w(\pt_{x_2}u_k)\pt_{x_2}v\df x_2\leftarrow &\int_{\La_2} w(\pt_{x_2}E_\de^0\Phi_{\de,k}^{(2)})(\pt_{x_2}v)\df x_2=\int_{\La_2^\de} w_\de (\pt_{x_2}\Phi_{\de,k}^{(2)})(\pt_{x_2}v)\df x_2\\
		&=\la_{\de,k}^{(2)}\int_{\La_2^\de}\Phi_{\de,k}^{(2)}v\df x_2=\la_{k,\de}^{(2)}\int_{\La_2}E_\de^0\Phi_{\de,k}^{(2)}v\df x_2\ra \la_k^{(2)}\int_{\La_2} u_kv\df x_2. 
		\end{split} 
	\end{equation*}
	This shows that $u_k$ is the eigenfunction of $\mcA^{(2)}$ with respect to the  $k$th eigenvalue $\la_k^{(2)}$. Note that $\la_k^{(2)}$ is simple (see \eqref{08.05.7}), hence
	\begin{equation}\label{08.05.9}
		u_k=\Phi_{k}^{(2)}. 
	\end{equation} 
	
	Finally, from Lemma \ref{08.04.L10}, we have 
	\begin{equation*}
		\int_{\La_2}w(\pt_{x_2}E_{\de_n}^0\Phi_{\de_n,k}^{(2)})^2\df x_2=\la_{\de_n,k}^{(2)}\ra \la_k^{(2)}= \int_{\La_2} w(\pt_{x_2}\Phi_k^{(2)})^2\df x_2.
	\end{equation*}
	Combining the weak convergence in $H_0^1(\Lambda_2;w)$ with the convergence of the corresponding energy norms (see the first part in \eqref{08.05.8}), we conclude that
	\begin{equation*}   E_{\de_n}^0\Phi_{\de_n,k}^{(2)}
	\to
	\Phi_k^{(2)}  \text{ strongly in }H_0^1(\Lambda_2;w).
\end{equation*}
The limit is independent of the chosen subsequence,  hence the whole family converges. 
We complete the proof of this lemma. 
\end{proof}

\begin{lemma}\label{08.05.L2}
	For each $k\in\mathbb N^*$, we have
	\begin{equation*}
		\pt_{x_2}\Phi_{\de,k}^{(2)}(1) \ra \pt_{x_2} \Phi_k^{(2)}(1) \mbox{ as } \de\ra 0^+, 
	\end{equation*}
	and 
	\begin{equation*}
	\left|\partial_{x_2}\Phi_{\delta,k}^{(2)}(1)\right|^2
	\le C\lambda_{\delta,k}^{(2)} \mbox{ for all } \de\in [0,4^{-1}], k\in\N^*, 
	\end{equation*}
	where $C>0$ depends only on $\alpha$.
\end{lemma}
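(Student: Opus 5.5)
The argument rests on a one-dimensional multiplier identity for the eigenvalue problem on $\La_2^\de=(\de,1)$, using the multiplier $(x_2-\de)\pt_{x_2}\Phi$ (or simply $x_2\pt_{x_2}\Phi$). Write $\Phi=\Phi_{\de,k}^{(2)}$ and $\la=\la_{\de,k}^{(2)}$, so that $-(x_2^\al\Phi')'=\la\Phi$ on $(\de,1)$ with $\Phi(\de)=\Phi(1)=0$. For $\de>0$ the problem is regular and $\Phi\in H^2(\de,1)$, so every integration by parts below is legitimate.

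\textbf{The uniform bound.} Multiply the equation by $x_2^{\al}(x_2-\de)\Phi'$ and integrate over $(\de,1)$. Set $q=x_2^\al\Phi'$, so the equation reads $-q'=\la\Phi$.
\begin{itemize}
\item The left-hand side is $-\int_\de^1 q' q (x_2-\de)\,dx_2=-\tfrac12 (1-\de)q(1)^2+\tfrac12\int_\de^1 q^2\,dx_2$.
\item The right-hand side is $\la\int_\de^1 x_2^\al (x_2-\de)\tfrac12(\Phi^2)'\,dx_2=-\tfrac{\la}{2}\int_\de^1\bigl(x_2^\al(x_2-\de)\bigr)'\Phi^2\,dx_2$. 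The boundary terms vanish because $\Phi(\de)=\Phi(1)=0$.
\end{itemize}
Since $q(1)=\Phi'(1)$, this gives
\[
(1-\de)\,|\Phi'(1)|^2=\int_\de^1 x_2^{2\al}(\Phi')^2\,dx_2+\la\int_\de^1\bigl(\al x_2^{\al-1}(x_2-\de)+x_2^\al\bigr)\Phi^2\,dx_2 .
\]
We bound the two terms on the right separately.
\begin{itemize}
\item Since $x_2\le1$, the first term is at most $\int_\de^1 x_2^\al(\Phi')^2\,dx_2=\la$.
\item Since $x_2-\de\le x_2$, the second term is at most $(1+\al)\la\int\Phi^2=(1+\al)\la$.
\end{itemize}
Because $1-\de\ge 3/4$, we obtain $|\Phi'(1)|^2\le \tfrac43(2+\al)\la$, uniformly in $\de\in(0,\tfrac14]$ and $k$.

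For $\de=0$ we use the same identity with $\de=0$. The extra boundary terms at $0$ are $x_2 q^2\to0$ and $x_2^{1+\al}\Phi^2\to0$. These follow either from the explicit Bessel form of $\Phi_k^{(2)}$ in Remark \ref{08.05.R1} or from a standard approximation argument.

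\textbf{The convergence.} Away from the degeneracy, on $(1/2,1)$, the equation is uniformly elliptic with coefficients independent of $\de$. I will write
\[
\Phi'(1)=x_2^{-\al}q(1),\qquad q(1)=q(s)-\la\int_s^1\Phi\,dx_2\quad\text{for } s\in(1/2,1).
\]
Averaging in $s$ over $(1/2,1)$ gives
\[
q(1)=2\int_{1/2}^1 q\,ds-2\la\int_{1/2}^1\int_s^1\Phi\,dx_2\,ds .
\]
This expresses $\Phi'(1)$ through integrals of $x_2^\al\Phi'$ and $\Phi$ over $(1/2,1)$. By Lemma \ref{08.05.L1}, $E_\de^0\Phi_{\de,k}^{(2)}\to\Phi_k^{(2)}$ strongly in $H_0^1(\La_2;w)$, and hence in $L^2$ by Remark \ref{08.04.R2}. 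On $(1/2,1)$ the weight is bounded, so $x_2^\al\pt_{x_2}\Phi_{\de,k}^{(2)}\to x_2^\al\pt_{x_2}\Phi_k^{(2)}$ in $L^2(1/2,1)$. Together with $\la_{\de,k}^{(2)}\to\la_k^{(2)}$ from Lemma \ref{08.04.L10}, each averaged integral converges, which gives $\pt_{x_2}\Phi_{\de,k}^{(2)}(1)\to\pt_{x_2}\Phi_k^{(2)}(1)$.

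\textbf{Main obstacle.} The only delicate point is the $\de=0$ case of the bound, namely justifying that the boundary terms at $x_2=0$ vanish. I would first use the explicit representation $\Phi_k^{(2)}\sim x_2^{(1-\al)}$ near $0$, which gives $x_2^{1+\al}\Phi^2\to0$ and $x_2(x_2^\al\Phi')^2\sim x_2^{1-\al}\to0$. Alternatively, the case $\de=0$ can be obtained as a limit: apply the uniform bound for $\de>0$ and pass to the limit using the convergence just proved together with Lemma \ref{08.04.L10}.
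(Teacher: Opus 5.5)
Your proof is correct, and for the bound it takes a genuinely different route from the paper.

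\textbf{The bound.} The paper multiplies the eigen-equation by $\zeta\,\partial_{x_2}\Phi_{\delta,k}^{(2)}$, where $\zeta$ is a cutoff supported in $(1/2,\infty)$ and equal to $1$ near $x_2=1$. No boundary term at the left endpoint ever appears, and the constant comes only from $|\zeta'|\le 8$ and $x_2\ge 1/2$ on the support. The case $\delta=0$ is then obtained by passing to the limit.

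You instead use the global Rellich-type multiplier $(x_2-\delta)x_2^{\alpha}\partial_{x_2}\Phi$, which vanishes at $x_2=\delta$. This gives an exact identity whose right-hand side is a sum of nonnegative terms, with an explicit constant $\frac43(2+\alpha)$ and no cutoff. It is the one-dimensional analogue of the multiplier $x\cdot\nabla y_\delta$ used in Section \ref{S4}.

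The price is that at $\delta=0$ your multiplier reaches the degenerate endpoint. You therefore need either the Bessel asymptotics or the same limit argument the paper uses, and the limit argument is the cleaner option. Note that the paper's cutoff version would in fact work verbatim at $\delta=0$, since it never touches $x_2=0$.

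\textbf{The convergence.} You and the paper both exploit that the equation is uniformly elliptic on $(1/2,1)$, so the routes differ only in execution. The paper upgrades the $H_0^1(\Lambda_2;w)$ convergence of Lemma \ref{08.05.L1} to $H^2(1/2,1)$ convergence via the ODE, then uses $H^2\hookrightarrow C^1$. Your averaged flux formula expresses $q(1)$ through $L^2$ quantities on $(1/2,1)$ and the eigenvalue, and needs no second derivatives. The paper's route gives the stronger $C^1[1/2,1]$ convergence; yours is more elementary.

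\textbf{A small slip.} From the Bessel form, $\Phi_k^{(2)}=x_2^{1-\alpha}F(x_2^{2-\alpha})$ with $F$ smooth. Hence $q=x_2^{\alpha}\partial_{x_2}\Phi_k^{(2)}$ has a finite limit at $0$, and $x_2q^2=O(x_2)$ rather than $\sim x_2^{1-\alpha}$. Your conclusion that this boundary term vanishes is unaffected.
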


\begin{proof}
	Since 
	\begin{equation*}
		-x_2^\al\pt_{x_2x_2}\Phi_{\de,k}^{(2)}=\la_{\de,k}^{(2)}\Phi_{\de, k}^{(2)}+\al x_{2}^{\al-1}\pt_{x_2} \Phi_{\de,k}^{(2)} \mbox{ on } (2^{-1},1), 
	\end{equation*}
	from Lemma \ref{08.05.L1}, we obtain 
	\begin{equation*}
		\pt_{x_2x_2}\Phi_{\de,k}^{(2)} \ra \pt_{x_2x_2}\Phi_k^{(2)} \mbox{ strongly in }L^2(2^{-1},1) \mbox{ as } \de\ra 0^+. 
	\end{equation*}
	Hence $\Phi_{\de,k}^{(2)}\ra \Phi_k^{(2)}$ strongly in $H^2(\f{1}{2},1)$ as $\de\ra 0^+$. By the embedding 
	$H^2(\f{1}{2},1)\hookrightarrow C^1[\f{1}{2},1]$, we have proved $\pt_{x_2}\Phi_{\de,k}^{(2)}(1)$ is meaningful, and 
	\begin{equation*}
		\pt_{x_2}\Phi_{\de,k}^{(2)}(1) \ra \pt_{x_2} \Phi_k^{(2)}(1) \mbox{ as } \de\ra 0^+. 
	\end{equation*}
	
	Let $\de\in (0,\f{1}{4}]$. Let $\zeta\in C_0^\iy(\R), 0\leq \zeta \leq 1$ such that 
	\begin{equation*}
		\zeta=1 \mbox{ on } \left(\f{3}{4},+\iy\right), \quad \zeta=0 \mbox{ on } \left(-\iy, \f{1}{2}\right), \quad |\zeta'|\leq 8 \mbox{ on }\R. 
	\end{equation*}
	Multiplying
	\begin{equation*}
		\begin{cases}
			\mcA_\de^{(2)}\Phi_{\de,k}^{(2)}=\la_{\de,k}^{(2)}\Phi_{\de,k}^{(2)}, &x\in \La_2^\de,\\
			\Phi_{\de,k}^{(2)}=0, &x\in \pt\La_2^\de
		\end{cases}
	\end{equation*}
	by $\zeta\pt_{x_2}\Phi_{\de,k}^{(2)}$, integrating on $(0,1)$, integration by parts, we obtain 
	\begin{equation*}
		\begin{split}
			&\f{1}{2}\left|\pt_{x_2}\Phi_{\de,k}^{(2)}(1)\right|^2 \\
			&=\int_{\La_2^\de} \zeta' x_2^\al (\pt_{x_2}\Phi_{\de,k}^{(2)})^2\df x_2-\f{1}{2}\int_{\La_2^\de} (\pt_{x_2}\Phi_{\de,k}^{(2)})^2 \pt_{x_2}(\zeta x_2^\al)\df x_2+\f{1}{2}\la_{\de,k}^{(2)}\int_{\La_2^\de}\zeta' (\Phi_{\de,k}^{(2)})^2\df x_2\\
			&\leq C\int_{\La_2^\de}x_2^\al (\pt_{x_2}\Phi_{\de,k}^{(2)})^2\df x_2+C\la_{\de,k}^{(2)}\int_{\La_2^\de}(\Phi_{\de,k}^{(2)})^2\df x_2\leq C\la_{\de,k}^{(2)}, 
		\end{split}
	\end{equation*}
	where the positive constants $C$ depends only on $\al$.  Passing to the limit as $\delta\to0^+$, 
	\begin{equation*} 
	|\partial_{x_2}\Phi_k^{(2)}(1)|^2
	\leq C\lambda_k^{(2)},
	\end{equation*} 
	where the positive constants $C$ depends only on $\al$. We prove the lemma. 
\end{proof}
 
\subsection{Approximation}

In this subsection, we treat the case $\de=0$ as the original problem \eqref{01.08.1} and the case $\de\in(0,\frac14)$ as the shape design problem \eqref{08.04.7}. Although the operators $\mathcal A_\delta$ have the same formal expression, the case $\delta=0$ is degenerate while the case $\delta>0$ is uniformly elliptic. Therefore, their domains and regularity properties are different.  Therefore, they must be treated separately in the spectral approximation. We also omit the symbol $E_\de^0$ (see \eqref{08.04.8} and \eqref{08.05.14} and \eqref{08.05.13})  whenever there is no risk of confusion.

Let $y^0, y^1\in C_0^\iy(\Om)$. Since the supports of $y^0$ and $y^1$ are separated from the degenerate boundary $\Ga_2^0=\{x_2=0\}$, for every
\begin{equation}\label{08.05.20}
	0\leq \delta<\de_0\equiv \f{1}{2}\min\left\{\operatorname{dist}(\operatorname{supp}y^0,\{0\}),
	\operatorname{dist}(\operatorname{supp}y^1,\{0\})\right\},
\end{equation}
we have
$y^0,y^1\in D(\mathcal A_\delta^\theta)$
for all $\theta\geq0$. Hence, when we take $y_\de^0=y^0$ and $y_\de^1=y^1$ for all $\de\in [0,\de_0)$, we have  
\begin{equation}\label{08.05.19}
	y^0=\sum_{n,k=1}^\iy y_{\de,nk}^0\Phi_{n}^{(1)}\Phi_{\de,k}^{(2)}, \quad y^1=\sum_{n,k=1}^\iy y_{\de,nk}^1\Phi_{n}^{(1)}\Phi_{\de,k}^{(2)}, 
\end{equation}
where 
\begin{equation}\label{08.05.18}
	y_{\de,nk}^0=(y^0, \Phi_{n}^{(1)}\Phi_{\de, k}^{(2)})_{L^2(\Om_\de)},\quad  y_{\de,nk}^1=(y^1, \Phi_{n}^{(1)}\Phi_{\de, k}^{(2)})_{L^2(\Om_\de)}. 
\end{equation}
Let  $M=\sup_{0\leq \de\leq \de_0}M_\de+1$ and 
\begin{equation*}
	\begin{split} 
		M_\de&=\sum_{n,k=1}^\iy \left[(\la_{nk}^\de)^2(y_{\de,nk}^0)^2+\la_{nk}^\de (y_{\de,nk}^1)^2\right]=\|\mcA_\de  y^0\|_{L^2(\Om_\de)}^2+\|y^1\|_{H_0^1(\Om_\de;w_\de)}^2\\
		&=\int_{\Om_\de}|\Div(A\nabla y^0)|^2\df x+\int_{\Om_\de} \nabla y^1\cdot A\nabla y^1\df x\\
		&= \int_{\Om_\de} \left(\pt_{x_1x_1}y^0+x_2^\al \pt_{x_2x_2}y^0+\al x_2^{\al-1}\pt_{x_2}y^0\right)^2\df x+\int_{\Om_\de} \left((\pt_{x_1}y^1)^2+x_2^\al (\pt_{x_2}y^1)^2\right)\df x\\
		&\leq C\left( \|D^2 y^0\|_{L^\iy(\Om)}^2+\|Dy^1\|_{L^\iy(\Om)}^2\right),  
	\end{split} 
\end{equation*}
here the positive constant $C$ depends only on $\al$ and $\de_0$. Then 
\begin{equation}\label{08.07.5}
	M\leq C<+\iy, 
\end{equation}
where the positive constant $C$ depends only on $\al$ and $\de_0$.

Now,  we introduce some shape design notation. 
Let $y_\de\in L^2(0,T; H_0^1(\Om_\de;w_\de))$. Define 
\begin{equation}\label{08.05.14}
	E_\de^0 y_\de(x,t)=
	\begin{cases}
		y_\de(x,t), &(x,t)\in \Om_\de \ts (0,T), \\
		0, &(x,t)\in (\Om-\Om_\de)\ts (0,T). 
	\end{cases}
\end{equation}
Then 
\begin{equation*}
	\|E_\de^0y_\de\|_{L^2(0,T; H_0^1(\Om;w))}=\|y_\de\|_{L^2(0,T; H_0^1(\Om_\de;w_\de))}. 
\end{equation*}

Let $y_\de\in H^1(0,T; L^2(\Om_\de))$. Define 
\begin{equation}\label{08.05.13}
	E_\de^0y_\de(x,t)=
	\begin{cases}
		y_\de(x,t), &(x,t)\in \Om_\de\ts (0,T),\\
		0, &(x,t)\in (\Om-\Om_\de)\ts (0,T), 
	\end{cases}
\end{equation}
then 
\begin{equation*}
	\pt_tE_\de^0 y_\de=E_\de^0\pt_ty_\de \mbox{ a.e.~on }Q. 
\end{equation*}
Indeed, for each $\zeta=\zeta(t)\in C_0^\iy(0,T)$ and each $v\in C_0^\iy(\Om)$, we have 
\begin{equation*}
	\begin{split}
		\iint_{Q} (\pt_tE_\de^0y_\de)\zeta v\df x_2\df t
		&=-\iint_{Q} (E_\de^0y_\de)\zeta' v\df x_2\df t=-\iint_{Q_\de}y_\de \zeta'v\df x_2\df t\\
		&=\iint_{Q_\de}(\pt_ty_\de)\zeta v\df x_2\df t=\iint_{Q}\left[E_\de^0(\pt_ty_\de)\right] \zeta v\df x_2\df t, 
	\end{split}
\end{equation*}
hence $\pt_tE_\de^0 y_\de=E_\de^0\pt_ty_\de$ a.e.~on $Q$. 

Now, we prove Theorem  \ref{08.05.T1}. We recall that 
\begin{equation*}
	\la_{nk}=\la_{nk}^0=\la_n^{(1)}+\la_{k}^{(2)},\quad \la_{nk}^\de=\la_n^{(1)}+\la_{\de, k}^{(2)}. 
\end{equation*}
For simplicity the notation, we denote 
\begin{equation}\label{08.07.2}
	\begin{split}
		A_{\de, nk}
		&=y_{\de, nk}^0\cos \left(\sqrt{\la_{nk}^\de}t\right)+y_{\de,nk}^1\f{1}{\sqrt{\la_{nk}^\de}}\sin \left(\sqrt{\la_{nk}^\de}t\right) \mbox{ for } \de\in [0, \de_0], \ A_{nk}=A_{0,nk}. 
	\end{split}
\end{equation}

\begin{theorem}\label{08.05.T1}
	Let $y^0,y^1\in C_0^\infty(\Omega)$. Let $y$ be the weak solution of \eqref{01.08.1} with initial data $(y^0,y^1)$, and $y_\de\ (0<\de\leq \f{1}{4})$ be the weak solution of \eqref{08.04.7} with initial data $(y^0,y^1)$. 
	Then
	\begin{equation}\label{08.05.15}
	E_\de^0y_\delta\to y \text{ strongly in }
	C([0,T];H_0^1(\Omega;w))
	\end{equation}
	and
	\begin{equation}\label{08.05.16} 
	\partial_tE_\de^0y_\delta\to\partial_ty \text{ strongly in }
	C([0,T];L^2(\Omega)), 
	\end{equation}
	and 
	\begin{equation}\label{08.05.17}
		\f{\pt E_\de^0y_\de}{\pt \nu_A}\ra \f{\pt y}{\pt \nu_A} \mbox{ strongly in } C([0, T]; L^2(\pt\Om-\Ga_N^0)). 
	\end{equation}
\end{theorem}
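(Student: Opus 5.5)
The plan is to compare the series representations \eqref{08.05.11} and \eqref{08.05.10} mode by mode, and to control the tails uniformly in $\de$ using the bound $M$ in \eqref{08.07.5}. Both $y$ and $y_\de$ are expanded in the products $\Phi_n^{(1)}\Phi_{\de,k}^{(2)}$, with coefficients $A_{\de,nk}(t)$ from \eqref{08.07.2}. We write
\begin{equation*}
E_\de^0y_\de-y=\sum_{n,k}\left(A_{\de,nk}E_\de^0\Phi_{\de,k}^{(2)}-A_{nk}\Phi_k^{(2)}\right)\Phi_n^{(1)} ,
\end{equation*}
and split it into a finite part ($n,k\le N$) and a tail. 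For the tail, the functions $\la_{nk}^{-1/2}\Phi_n^{(1)}\Phi_{\de,k}^{(2)}$ are orthonormal in $H_0^1(\Om_\de;w_\de)$, and extension by zero is an isometry. Hence the $H_0^1(\Om;w)$-norm squared of the tail of $E_\de^0y_\de$ is $\sum_{\max(n,k)>N}\la_{nk}^\de A_{\de,nk}(t)^2$. This is at most
\begin{equation*}
2\sum_{\max(n,k)>N}\left[\la^\de_{nk}(y^0_{\de,nk})^2+(y^1_{\de,nk})^2\right]\le \frac{2M}{\min_{\max(n,k)>N}\la_{nk}^\de} .
\end{equation*}
Since $\la_{nk}^\de\ge\la_{nk}$ by Step 2 of Lemma \ref{08.04.L10}, this is bounded by $C M/N^2$ uniformly in $t\in[0,T]$ and $\de\in[0,\de_0)$. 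The same bound holds for $\pt_t$ in $L^2$, where the sum is $\sum(\pt_tA_{\de,nk})^2$. So it suffices to prove convergence of each fixed mode, uniformly in $t$.

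For a fixed mode $(n,k)$ we argue as follows. By Lemma \ref{08.04.L10}, $\la_{\de,k}^{(2)}\to\la_k^{(2)}$, so $\la^\de_{nk}\to\la_{nk}$. By Lemma \ref{08.05.L1}, $E_\de^0\Phi_{\de,k}^{(2)}\to\Phi_k^{(2)}$ strongly in $H_0^1(\La_2;w)$, hence also in $L^2(\La_2)$. Because $y^0,y^1$ are smooth and vanish near $x_2=0$, we get $y^i_{\de,nk}=(y^i,\Phi_n^{(1)}E_\de^0\Phi_{\de,k}^{(2)})_{L^2(\Om)}\to y^i_{nk}$. Then $A_{\de,nk}\to A_{nk}$ and $\pt_tA_{\de,nk}\to\pt_tA_{nk}$ uniformly on $[0,T]$, since $\cos,\sin$ are Lipschitz and $t\in[0,T]$. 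Next, $\Phi_n^{(1)}E_\de^0\Phi_{\de,k}^{(2)}\to\Phi_n^{(1)}\Phi_k^{(2)}$ in $H_0^1(\Om;w)$. This follows because the $x_1$-derivative part reduces to $L^2$ convergence in $x_2$, and the weighted $x_2$-part is exactly Lemma \ref{08.05.L1}. Combining with the tail estimate, we choose $N$ first and then $\de$ small, which yields \eqref{08.05.15} and \eqref{08.05.16}.

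For the conormal derivative \eqref{08.05.17}, I read $\pt\Om-\Ga_N^0$ as the boundary with the degenerate side $\Ga_2^0$ removed, i.e.\ $\Ga_1^{\pm1}\cup\Ga_2^1$. On $\Ga_2^1$ the conormal derivative is $\sum_{n,k}A_{\de,nk}(t)\Phi_n^{(1)}(x_1)\pt_{x_2}\Phi_{\de,k}^{(2)}(1)$. Its $L^2(\Ga_2^1)$-norm squared is $\sum_n\left(\sum_kA_{\de,nk}\pt_{x_2}\Phi^{(2)}_{\de,k}(1)\right)^2$. Each mode converges by Lemma \ref{08.05.L2}. For the tail I would use Cauchy--Schwarz in $k$ together with the bound $|\pt_{x_2}\Phi^{(2)}_{\de,k}(1)|^2\le C\la^{(2)}_{\de,k}$, which gives
\begin{equation*}
\Big(\sum_{k>N}A_{\de,nk}\pt_{x_2}\Phi_{\de,k}^{(2)}(1)\Big)^2\le C\sum_{k>N}(\la_{nk}^\de)^{2}A_{\de,nk}^2\sum_{k>N}\frac{\la^{(2)}_{\de,k}}{(\la_{nk}^\de)^{2}} .
\end{equation*}
The last factor is $\lesssim\sum_{k>N}k^{-2}$ uniformly in $\de$, because $\la^{(2)}_{\de,k}\ge\la_k^{(2)}\sim k^2$. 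The first factor is controlled by $M$, which carries the $D(\mcA_\de)$-level bound. The sides $\Ga_1^{\pm1}$ are handled analogously, using $\pt_{x_1}\Phi_n^{(1)}(\pm1)=O(\sqrt{\la_n^{(1)}})$ and the $L^2(\La_2)$ convergence of $E^0_\de\Phi^{(2)}_{\de,k}$; the tails are summed the same way.

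I expect the main obstacle to be this last uniform tail estimate. It is the only place where the $D(\mcA_\de)$-level bound $M$ is really needed. One must also check that $\la_{nk}^\de$ grows at least like $n^2+k^2$ uniformly in $\de$, which follows from the monotonicity $\la^{(2)}_{\de,k}\ge\la^{(2)}_k$ and Remark \ref{08.05.R1}.
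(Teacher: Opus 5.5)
Your proposal is correct and follows essentially the same route as the paper. Both use the modal expansions \eqref{08.05.11}--\eqref{08.05.10} and split the index set into a finite block and a tail $\{\max(n,k)>N\}$. The tail is controlled uniformly in $\delta$ and $t$ by $M$ and the monotonicity $\lambda_{\delta,k}^{(2)}\ge\lambda_k^{(2)}$, the finitely many remaining modes converge by Lemmas \ref{08.04.L10}, \ref{08.05.L1} and \ref{08.05.L2}, and a weighted Cauchy--Schwarz inequality handles the conormal traces.

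Your weight $(\lambda_{nk}^\delta)^2$ makes the smallness of the boundary tails explicit, which the paper's Steps 3--4 do not (they only state boundedness). On $\Gamma_2^1$, apply the estimate over the whole tail set, including $n>N$, $k\le N$. This works because $\sum_k\lambda_{\delta,k}^{(2)}/(\lambda_{nk}^\delta)^2\lesssim\sum_k (n^2+k^2)^{-1}\lesssim 1/n$ uniformly in $\delta$.
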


\begin{proof}
	We prove this by the following steps. 
	
	{\it Step 1}. 
	It is known that $y$ has the form \eqref{08.05.11}, and $y_\de\ (0<\de\leq \f{1}{4})$ has the form \eqref{08.05.10}. 
	
	{\it Step 2}. We compute
	\begin{equation*}
		\begin{split}
			\|y\|_{C([0,T]; H_0^1(\Om;w))}^2
			&\leq \sup_{t\in [0,T]}\sum_{n,k=1}^\iy \left(y_{nk}^0\cos\left(\sqrt{\la_{nk}}t\right)+y_{nk}^1 \f{1}{\sqrt{\la_{nk}}}\sin \left(\sqrt{\la_{nk}}t\right)\right)^2\la_{nk}\\
			&\leq 2\sum_{n,k=1}^\iy \left[\la_{nk}(y_{nk}^0)^2+(y_{nk}^1)^2\right]<+\iy
		\end{split}
	\end{equation*}
	and 
	\begin{equation*}
		\begin{split}
			\|\pt_ty\|_{C([0,T]; L^2(\Om))}^2
			&\leq \sup_{t\in [0,T]}\sum_{n,k=1}^\iy \left(-y_{nk}^0\sqrt{\la_{nk}}\sin \left(\sqrt{\la_{nk}}t\right) +y_{nk}^1\cos\left(\sqrt{\la_{nk}}t\right)\right)^2\\
			&\leq 2\sum_{n,k=1}^\iy \left[\la_{nk}(y_{nk}^0)^2+(y_{nk}^1)^2\right]<+\iy
		\end{split}
	\end{equation*}
	by \eqref{08.07.5}, and 
	\begin{equation*}
		\begin{split}
			\|y_\de\|_{C([0,T];  H_0^1(\Om_\de;w_\de))}^2
			&\leq \sup_{t\in [0,T]}\sum_{n,k=1}^\iy \left(y_{\de,nk}^0\cos\left(\sqrt{\la_{nk}^\de}t\right)+y_{\de,nk}^1\f{1}{\sqrt{\la_{nk}^\de}}\sin \left(\sqrt{\la_{nk}^\de}t\right)\right)^2\la_{nk}^\de\\
			&\leq 2\sum_{n,k=1}^\iy \left[\la_{nk}^\de(y_{\de, nk}^0)^2+(y_{\de, nk}^1)^2\right]<+\iy
		\end{split}
	\end{equation*}
	and 
	\begin{equation*}
		\begin{split}
			\|\pt_ty_\de\|_{C([0,T];L^2(\Om_\de))}^2 &\leq \sup_{t\in [0,T]}\sum_{n,k=1}^\iy \left(-y_{\de, nk}^0\sqrt{\la_{nk}^\de}\sin \left(\sqrt{\la_{nk}^\de}t\right) +y_{\de,nk}^1\cos\left(\sqrt{\la_{nk}^\de}t\right)\right)^2\\
			&\leq 2\sum_{n,k=1}^\iy \left[\la_{nk}^\de(y_{\de,nk}^0)^2+(y_{\de,nk}^1)^2\right]<+\iy
		\end{split}
	\end{equation*}
	by \eqref{08.07.5}. 
	
	{\it Step 3}. On $\Ga_1^{-1}$, by \eqref{08.05.11} and Remark \ref{08.05.R1},
	\begin{equation*}
		\partial_{x_1}\Phi_n^{(1)}(-1)=\sqrt{\lambda_n^{(1)}},
	\end{equation*}
	we obtain
	\begin{equation*}
		\frac{\partial y}{\partial\nu_A}
		=-\partial_{x_1}y
		=-\sum_{k=1}^\infty
		\left(
		\sum_{n=1}^\infty
		A_{nk}(t)\sqrt{\lambda_n^{(1)}}
		\right)\Phi_k^{(2)}.
	\end{equation*}
	Hence, by the orthonormality of $\{\Phi_k^{(2)}\}_{k=1}^\infty$ in $L^2(\La_2)$,
	\begin{equation*}
		\begin{aligned}
			\left\|\frac{\partial y}{\partial\nu_A}\right\|_{L^2(\Ga_1^{-1}\times(0,T))}^2
			&=\int_0^T\sum_{k=1}^\infty
			\left(
			\sum_{n=1}^\infty
			A_{nk}(t)\sqrt{\lambda_n^{(1)}}
			\right)^2\df t\\
			&=\int_0^T\sum_{k=1}^\infty
			\left(
			\sum_{n=1}^\infty
			(\lambda_n^{(1)})^{-\theta}
			A_{nk}(t)
			(\lambda_n^{(1)})^{\frac12+\theta}
			\right)^2\df t\\
			&\leq
			\left(\sum_{n=1}^\infty(\lambda_n^{(1)})^{-2\theta}\right)
			\int_0^T\sum_{n,k=1}^\infty
			(A_{nk}(t))^2(\lambda_n^{(1)})^{1+2\theta}\df t.
		\end{aligned}
	\end{equation*}
	Since
	\begin{equation*}
		(A_{nk}(t))^2
		\leq
		2\left[
		(y_{nk}^0)^2+
		(\lambda_{nk})^{-1}(y_{nk}^1)^2
		\right],
	\end{equation*}
	we have
	\begin{equation*}
		\begin{aligned}
			\left\|\frac{\partial y}{\partial\nu_A}\right\|_{L^2(\Ga_1^{-1}\times(0,T))}^2
			&\leq
			2T\left(\sum_{n=1}^\infty(\lambda_n^{(1)})^{-2\theta}\right)
			\sum_{n,k=1}^\infty
			\left[
			(y_{nk}^0)^2+
			(\lambda_{nk})^{-1}(y_{nk}^1)^2
			\right]
			(\lambda_n^{(1)})^{1+2\theta}\\
			&\leq
			2T\left(\sum_{n=1}^\infty(\lambda_n^{(1)})^{-2\theta}\right)
			\sum_{n,k=1}^\infty
			\left[
			(y_{nk}^0)^2+
			(\lambda_{nk})^{-1}(y_{nk}^1)^2
			\right]
			(\lambda_{nk})^{1+2\theta}.
		\end{aligned}
	\end{equation*}
	For $\theta\in(\frac14,\frac12]$, we have (Remark \ref{08.05.R1})
	\begin{equation*}
		\sum_{n=1}^\infty(\lambda_n^{(1)})^{-2\theta}<+\infty,
	\end{equation*}
	and
	\begin{equation*}
		(\lambda_{nk})^{1+2\theta}(y_{nk}^0)^2
		\leq
		(\lambda_{nk})^2(y_{nk}^0)^2,
		\qquad
		(\lambda_{nk})^{2\theta}(y_{nk}^1)^2
		\leq
		\lambda_{nk}(y_{nk}^1)^2.
	\end{equation*}
	Consequently,
	\begin{equation*}
		\left\|\frac{\partial y}{\partial\nu_A}\right\|_{L^2(\Ga_1^{-1}\times(0,T))}^2
		\leq
		CTM,
	\end{equation*}
	where $C>0$ is independent of $\delta$.
	
	On $\Ga_1^{1}$, arguing exactly as above and using
	\begin{equation*} 
	\partial_{x_1}\Phi_n^{(1)}(1)=(-1)^n\sqrt{\lambda_n^{(1)}},
	\end{equation*}
	we obtain
	\begin{equation*}
		\left\|\frac{\partial y}{\partial\nu_A}\right\|_{L^2(\Ga_1^{1}\times(0,T))}^2
		\leq CTM.
	\end{equation*}
	
	On $\Ga_2^1$, we have
	\begin{equation*} 
	\frac{\partial y}{\partial\nu_A}
	=x_2^\alpha\partial_{x_2}y
	=\partial_{x_2}y,
	\end{equation*} 
	since $x_2=1$ on $\Ga_2^1$. By the expression of $\Phi_k^{(2)}$ in Remark \ref{08.05.R1} and the properties of Bessel functions, we have
	\begin{equation*} 
	\left|x_2^\alpha\partial_{x_2}\Phi_k^{(2)}(1)\right|^2
	=2\kappa\lambda_k^{(2)},
	\end{equation*}
	see also Lemma \ref{08.05.L2}. Hence, by the orthonormality of $\{\Phi_n^{(1)}\}_{n=1}^\infty$ in $L^2(\La_1)$,
	\begin{equation*}
		\begin{aligned}
			\left\|\frac{\partial y}{\partial\nu_A}\right\|_{L^2(\Ga_2^1\times(0,T))}^2
			&=2\kappa\int_0^T
			\sum_{n=1}^\infty
			\left(
			\sum_{k=1}^\infty
			A_{nk}(t)\sqrt{\lambda_k^{(2)}}
			\right)^2\df t\\
			&=2\kappa\int_0^T
			\sum_{n=1}^\infty
			\left(
			\sum_{k=1}^\infty
			(\lambda_k^{(2)})^{-\theta}
			A_{nk}(t)
			(\lambda_k^{(2)})^{\frac12+\theta}
			\right)^2\df t\\
			&\leq
			2\kappa
			\left(\sum_{k=1}^\infty
			(\lambda_k^{(2)})^{-2\theta}\right)
			\int_0^T
			\sum_{n,k=1}^\infty
			(A_{nk}(t))^2
			(\lambda_k^{(2)})^{1+2\theta}\df t.
		\end{aligned}
	\end{equation*}
	Since
	\begin{equation*} 
	(A_{nk}(t))^2
	\leq
	2\left[
	(y_{nk}^0)^2+
	(\lambda_{nk})^{-1}(y_{nk}^1)^2
	\right],
	\end{equation*} 
	we obtain
	\begin{equation*}
		\begin{aligned}
			\left\|\frac{\partial y}{\partial\nu_A}\right\|_{L^2(\Ga_2^1\times(0,T))}^2
			&\leq
			CT
			\left(\sum_{k=1}^\infty
			(\lambda_k^{(2)})^{-2\theta}\right)
			\sum_{n,k=1}^\infty
			\left[
			(y_{nk}^0)^2+
			(\lambda_{nk})^{-1}(y_{nk}^1)^2
			\right]
			(\lambda_k^{(2)})^{1+2\theta}\\
			&\leq
			CT
			\left(\sum_{k=1}^\infty
			(\lambda_k^{(2)})^{-2\theta}\right)
			\sum_{n,k=1}^\infty
			\left[
			(y_{nk}^0)^2+
			(\lambda_{nk})^{-1}(y_{nk}^1)^2
			\right]
			(\lambda_{nk})^{1+2\theta}\leq CTM,
		\end{aligned}
	\end{equation*}
	for $\theta\in(\frac14,\frac12]$, where $C>0$ is independent of $\delta$.
	
	{\it Step 4}.  On $\Ga_{\de,1}^{\pm1}$, we have $
	\frac{\partial y_\de}{\partial\nu_{A_\de}}
	=\pm\partial_{x_1}y_\de$. 
	Arguing as in Step 3, for $\theta\in(\frac14,\frac12]$, we obtain
	\begin{equation*}
		\begin{aligned}
			\left\|\frac{\partial y_\de}{\partial\nu_{A_\de}}\right\|_{L^2(\Ga_{\de,1}^{\pm1}\times(0,T))}^2
			&\leq
			2T\left(\sum_{n=1}^\infty(\lambda_n^{(1)})^{-2\theta}\right)
			\sum_{n,k=1}^\infty
			\left[
			(y_{nk}^0)^2+
			(\lambda_{nk}^\de)^{-1}(y_{nk}^1)^2
			\right]
			(\lambda_{nk}^\de)^{1+2\theta}\\
			&\leq
			2T\left(\sum_{n=1}^\infty(\lambda_n^{(1)})^{-2\theta}\right)
			M_\de\leq CTM,
		\end{aligned}
	\end{equation*}
	where $C>0$ is independent of $\de$.
	
	On $\Ga_{\de,2}^{1}$, we have
	$
	\frac{\partial y_\de}{\partial\nu_{A_\de}}
	=\partial_{x_2}y_\de$. 
	By Lemmas \ref{08.04.L10} and  \ref{08.05.L2}, the orthonormality of ${\Phi_n^{(1)}}$, and the same weighted Cauchy–Schwarz argument as above, for $\theta\in(\frac14,\frac12]$,
	\begin{equation*}
		\begin{split} 
			\left\|\frac{\partial y_\de}{\partial\nu_{A_\de}}\right\|_{L^2(\Ga_{\de,2}^{1}\times(0,T))}^2
			\leq
			CT\left(\sum_{k=1}^\infty(\lambda_{\de,k}^{(2)})^{-2\theta}\right)
			\sum_{n,k=1}^\infty
			\left[(y_{nk}^0)^2+(\lambda_{nk}^\de)^{-1}(y_{nk}^1)^2\right]
			(\lambda_{nk}^\de)^{1+2\theta}
			\leq CTM,
		\end{split} 
	\end{equation*} 
	where $C>0$ is independent of $\de$.
	
	{\it Step 5}.  
	From \eqref{08.05.18} and Lemma \ref{08.05.L1}, we obtain 
	\begin{equation*}
		\begin{split}
			y_{\de,nk}^0=(y^0,\Phi_n^{(1)}\Phi_{\de,k}^{(2)})_{L^2(\Om)}\ra (y^0, \Phi_n^{(1)}\Phi_k^{(2)})_{L^2(\Om)}=y_{nk}^0 \mbox{ as } \de \ra 0^+, 
		\end{split}
	\end{equation*}
	and 
	\begin{equation*}
		\begin{split}
			y_{\de,nk}^1=(y^1,\Phi_n^{(1)}\Phi_{\de,k}^{(2)})_{L^2(\Om)}\ra (y^1, \Phi_n^{(1)}\Phi_k^{(2)})_{L^2(\Om)}=y_{nk}^1 \mbox{ as } \de \ra 0^+.  
		\end{split}
	\end{equation*}
	
	{\it Step 6}. We prove \eqref{08.05.15}. 
	
	Let $\e>0$. Choose $m\in\N^*$ large enough such that for all $n,k\geq m+1$, we have 
	\begin{equation}\label{08.07.3}
		\la_n^{(1)}\geq \max\{1, M\e^{-2}\},\quad \la_k^{(2)}\geq \max\{1,M\e^{-2}\}, 
	\end{equation} 
	Then, for all $0<\de\leq \de_0$, we have  $\la_{\de,k}^{(2)}\geq \la_k^{(2)}\geq \max\{1,M\e^{-2}\}$ for all $k\geq m+1$ by \eqref{08.07.1}, and  $\la_{nk}^\de=\la_n^{(1)}+\la_{\de,k}^{(2)}\geq M\e^{-2}$ for all $n\geq m+1$ or $k\geq m+1$, and 
	\begin{equation}\label{08.06.1}
		\begin{split} 
		 \sum_{n,k\in A_m} \left(\la_{nk}^\de(y_{\de,nk}^0)^2+(y_{\de,nk}^1)^2\right)
		 &\leq \sum_{n,k\in A_m} (\la_{nk}^\de)^{-1} \la_{nk}^\de  \left(\la_{nk}^\de (y_{\de,nk}^0)^2+(y_{\de,nk}^1)^2\right)\\
		 &\leq \sum_{n,k\in A_m} (\la_{nk})^{-1} \la_{nk}^\de  \left(\la_{nk}^\de (y_{\de,nk}^0)^2+(y_{\de,nk}^1)^2\right)\\
		 &<M_\de\f{\e^2}{M}<\e^2.  
		 \end{split} 
	\end{equation} 
	For convenience, throughout the remainder of the proof we set
	\begin{equation*}
	A_m=\{(n,k)\in\N^2:\ n\ge m+1 \text{ or } k\ge m+1\}.
	\end{equation*}
	
	 From \eqref{08.05.11} and \eqref{08.05.10}, we get 
	 \begin{equation*}
	 	\begin{split}
	 		y_\de-y
	 		&=\sum_{n,k=1}^m \left(A_{\de,nk}\Phi_n^{(1)}\Phi_{\de,k}^{(2)}-A_{nk}\Phi_n^{(1)}\Phi_k^{(2)}\right)\\
	 		&\hspace{4.5mm}+\sum_{n,k\in A_m} A_{\de,nk}\Phi_n^{(1)}\Phi_{\de,k}^{(2)}+\sum_{n,k\in A_m} A_{nk}\Phi_n^{(1)}\Phi_k^{(2)}\\
	 		&\equiv A_1+A_2+A_3.
	 	\end{split} 
	 \end{equation*} 
	 
	 From \eqref{08.06.1}, together with  $|\cos s|\leq1, |\sin s|\leq1$,
	 we deduce that 
	 \begin{equation*}
	 	\begin{split} 
	 	\lambda_{nk}^\de |A_{\de,nk}|^2
	 	&=
	 	\lambda_{nk}^\de
	 	\left|
	 	y_{\de,nk}^0\cos\left(\sqrt{\la_{nk}^\de}t\right)+\frac{y_{\de,nk}^1}{\sqrt{\lambda_{nk}^\de}}\sin\left(\sqrt{\la_{nk}^\de}t\right)
	 	\right|^2 \le
	 	2\lambda_{nk}^\de (y_{\de,nk}^0)^2
	 	+
	 	2(y_{\de,nk}^1)^2.
	 	\end{split} 
	 \end{equation*}
	 Therefore, from \eqref{08.06.1}, we obtain 
	 \begin{equation*}
	 	\|A_2\|_{C([0,T]; H_0^1(\Om;w))}^2\leq 2
	 	\sum_{n,k\in A_m}
	 	\left[
	 	\lambda_{nk}^\delta
	 	(y_{\delta,nk}^0)^2
	 	+
	 	(y_{\delta,nk}^1)^2
	 	\right]<2\e, \mbox{ and } \|A_3\|_{C([0,T]; H_0^1(\Om;w))}<\sqrt{2}\e. 
	 \end{equation*}

	 Note that 
	 \begin{equation*}
	 	\begin{split}
	 		&\sum_{n,k=1}^m \left(A_{\de,nk}\Phi_n^{(1)}\Phi_{\de,k}^{(2)}-A_{nk}\Phi_n^{(1)}\Phi_k^{(2)}\right)=\sum_{n,k=1}^m\left[(A_{\de,nk}-A_{nk})\Phi_n^{(1)}\Phi_{\de,k}^{(2)}+ A_{nk}\Phi_n^{(1)}\left(\Phi_{\de,k}^{(2)}-\Phi_k^{(2)}\right)\right], 
	 	\end{split}
	 \end{equation*}
	 and 
	 \begin{equation*}
	 	\begin{split}
	 		&A_{\de,nk}-A_{nk}\\
	 		&=(y_{\de,nk}^0-y_{nk}^0)\cos\left(\sqrt{\la_{nk}^\de }t\right)+y_{nk}^0\left[\cos\left(\sqrt{\la_{nk}^\de }t\right)-\cos\left(\sqrt{\la_{nk}}t\right)\right]\\
	 		&\hspace{4.5mm}+(y_{\de,nk}^1-y_{nk}^1)\sin\left(\sqrt{\la_{nk}^\de }t\right)+y_{nk}^1\left[\sin\left(\sqrt{\la_{nk}^\de }t\right)-\sin\left(\sqrt{\la_{nk}}t\right)\right], 
	 	\end{split}
	 \end{equation*}
	 and (Step 5)
	 \begin{equation*}
	 	\begin{split}
	 		y_{\de,nk}^0\ra y_{nk}^0, \quad y_{\de,nk}^1\ra y_{nk}^1, 	 	\end{split}
	 \end{equation*}
	 and
	 \begin{equation*}
	 	\begin{split}
	 		\left|\cos\left(\sqrt{\la_{nk}^\de }t\right)-\cos\left(\sqrt{\la_{nk}}t\right)\right|
	 		&\leq \left|\sqrt{\la_{nk}^\de }-\sqrt{\la_{nk}}\right|t,\\
	 		\left|\sin\left(\sqrt{\la_{nk}^\de }t\right)-\sin\left(\sqrt{\la_{nk}}t\right)\right|
	 		&\leq \left|\sqrt{\la_{nk}^\de }-\sqrt{\la_{nk}}\right|t, 
	 	\end{split}
	 \end{equation*}
	 by Lemmas \ref{08.04.L10} and \ref{08.05.L1}, we have
	 \begin{equation}\label{08.07.4} 
	 \lambda_{nk}^\de\rightarrow\lambda_{nk},\qquad
	 \Phi_{\de,k}^{(2)}\rightarrow\Phi_k^{(2)} \mbox{ strongly in } H_0^1(\La_2;w)
	 \end{equation} 
	 as $\de\rightarrow0^+$, hence, since the above summation contains only finitely many terms, we conclude that
	 \begin{equation*} 
	 \|A_1\|_{C([0,T];H_0^1(\Om;w))}\rightarrow0.
	 \end{equation*} 
	 Combining the above convergence with the estimates for $A_2$ and $A_3$, we conclude that, for all sufficiently small $\de>0$,
	 \begin{equation*} 
	 \|y_\de-y\|_{C([0,T];H_0^1(\Om;w))}
	 \leq C\e ,
	 \end{equation*} 
	 where $C>0$ is independent of $\de\in[0,\de_0]$. Since $\epsilon>0$ is arbitrary, \eqref{08.05.15} follows.
	 
	 {\it Step 7}. The proof is analogous. Indeed, differentiating the spectral representation with respect to time only exchanges the roles of the coefficients $y_{\delta,nk}^{0}$ and $y_{\delta,nk}^{1}$ in the energy estimate.
	 
	 {\it Step 8}. We prove \eqref{08.05.17}. 
	 
	 On $\Ga_{1}^{-1}$, we have $\f{\pt y_\de}{\pt \nu_A}=- \pt_{x_1}y_\de$ for all $\de \in [0,\de_0]$, and 
	 \begin{equation*}
	 	\begin{split}
	 		\f{\pt y_\de}{\pt \nu_A}-\f{\pt y}{\pt \nu_A}
	 		&=-\sum_{n,k=1}^m \left(A_{\de,nk}\Phi_{\de,k}^{(2)}-A_{nk}\Phi_{k}^{(2)}\right)(\pt_{x_1}\Phi_n^{(1)}(-1))\\
	 		&\hspace{4.5mm}- \sum_{n,k\in A_m}A_{\de,nk}\Phi_{\de,k}^{(2)}(\pt_{x_1}\Phi_n^{(1)}(-1))- \sum_{n,k\in A_m}A_{nk}\Phi_{k}^{(2)}(\pt_{x_1}\Phi_n^{(1)}(-1))\\
	 		&=B_1+B_2+B_3
	 	\end{split}
	 \end{equation*}
	 by \eqref{08.05.11} and \eqref{08.05.10}, where $A_{\de,nk}\ (\de\in [0,\de_0])$ is defined in \eqref{08.07.2}, and $m$ is defined in \eqref{08.07.3}. From Steps 3 and  4, we get 
	 \begin{equation*}
	 	\|B_2\|_{L^2(\Ga_1^{-1}\ts (0,T))}\leq \sqrt{2}\e, \quad \|B_3\|_{L^2(\Ga_1^{-1}\ts (0,T))}\leq \sqrt{2}\e. 
	 \end{equation*}
	 Arguing exactly as in Step 6, from \eqref{08.07.4} and Remark \ref{08.05.R1}, we deduce that 
	 \begin{equation*}
	 	\begin{split}
	 		\|B_1\|_{L^2(\Ga_1^{-1}\ts (0,T))}\leq \e
	 	\end{split}
	 \end{equation*}
	 for $\de>0$ small enough. Hence we have 
	 \begin{equation*}
	 	\f{\pt y_\de}{\pt \nu_A}\ra \f{\pt y}{\pt \nu_A} \mbox{ strongly in }L^2(\Ga_1^{-1}\ts (0,T)) \mbox{ as } \de\ra 0^+. 
	 \end{equation*} 
	 The convergence on the remaining boundary components follows in the same way by applying Remark \ref{08.05.R1} and  Lemma \ref{08.05.L2}.
	 We complete the proof of this theorem. 
\end{proof}

\begin{remark}\label{08.07.R1}
	From the proof of Theorem \ref{08.05.T1}, the same conclusion remains valid if, for some $\theta>\frac14$,
	\begin{equation*} 
	y^0\in D\left(\mathcal A^{\frac12+\theta}\right),
	\qquad
	y^1\in D\left(\mathcal A^\theta\right).
	\end{equation*}
	Here, the condition $\theta>\f{1}{4}$ arises from the summability property
	\begin{equation*} 
	\sum_{n=1}^\infty(\lambda_n^{(1)})^{-2\theta}<+\infty.
	\end{equation*} 
\end{remark}

\begin{lemma}\label{08.06.L1}
	Let $y^0,y^1\in C_0^\infty(\Omega)$, and let
	\begin{equation*} 
		0<\de<\de_0\equiv 
		\f{1}{2}\min\left\{
		\f{1}{4}, 
		\operatorname{dist}(\operatorname{supp}y^0,{0}),
		\operatorname{dist}(\operatorname{supp}y^1,{0})
		\right\}.
	\end{equation*} 
	Let $y_\de$ be the weak solution of \eqref{08.04.7} with initial data $(y^0,y^1)$. Then
	\begin{equation*} 
		y_\de
		\in
		L^2(0,T;H^2(\Omega_\de))
		\cap
		H^1(0,T;H_0^1(\Omega_\de))
		\cap
		H^2(0,T;L^2(\Omega_\de)).
	\end{equation*}
\end{lemma}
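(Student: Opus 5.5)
The plan is to work entirely with the spectral representation \eqref{08.05.10} of $y_\de$ and with the identification of the domains of powers of $\mcA_\de$ with standard Sobolev spaces on $\Om_\de$. Since $x_2\geq\de>0$ on $\ol{\Om_\de}$, the operator $\mcA_\de=-\Div(A_\de\nabla\cdot)$ is uniformly elliptic with smooth (indeed $C^\iy$ on $\ol{\Om_\de}$) coefficients. The analogues of Lemma \ref{08.04.L7} and Corollary \ref{08.04.C1} then give $D(\mcA_\de^{1/2})=H_0^1(\Om_\de)$ with equivalent norms. They also give $D(\mcA_\de)=H_0^1(\Om_\de)\cap H^2(\Om_\de)$, where the norm equivalence $\|u\|_{H^2(\Om_\de)}\leq C_\de\|\mcA_\de u\|_{L^2(\Om_\de)}$ comes from elliptic regularity (already invoked in the paper to write $H^2(\Om_\de;w_\de)=H^2(\Om_\de)$). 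The rectangle is convex, so $H^2$ regularity holds despite the corners; alternatively, one can verify it directly via the tensor basis $\Phi_n^{(1)}\Phi_{\de,k}^{(2)}$, since $\|\pt_{x_1x_1}u\|$ and $\|\pt_{x_2}(x_2^\al\pt_{x_2}u)\|$ are controlled by $\sum(\la_n^{(1)})^2u_{nk}^2$ and $\sum(\la_{\de,k}^{(2)})^2u_{nk}^2$, both bounded by $\sum(\la^\de_{nk})^2u_{nk}^2$.

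Next I will check the regularity of the data. Since $\de<\de_0$, the supports of $y^0,y^1$ lie in $\Om_\de$, so $y^0,y^1\in C_0^\iy(\Om_\de)$. Hence $\mcA_\de y^0\in C_0^\iy(\Om_\de)\s H_0^1(\Om_\de)$, so $y^0\in D(\mcA_\de^{3/2})$, and $y^1\in H_0^1(\Om_\de)=D(\mcA_\de^{1/2})$. In coefficient form this is exactly $M_\de<\iy$ as computed before \eqref{08.07.5}: $\sum[(\la^\de_{nk})^2(y^0_{\de,nk})^2+\la^\de_{nk}(y^1_{\de,nk})^2]<\iy$.

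Then I will differentiate the series \eqref{08.05.10} termwise. For the coefficients $A_{\de,nk}(t)$ of \eqref{08.07.2},
\[
(\la^\de_{nk})^2|A_{\de,nk}|^2+\la^\de_{nk}|A_{\de,nk}'|^2+|A_{\de,nk}''|^2\leq C\left[(\la^\de_{nk})^2(y^0_{\de,nk})^2+\la^\de_{nk}(y^1_{\de,nk})^2\right],
\]
uniformly in $t$, using $A''_{\de,nk}=-\la^\de_{nk}A_{\de,nk}$. Summing over $n,k$ and applying dominated convergence shows that the partial sums converge in $C([0,T];D(\mcA_\de))$, with $\pt_t$ of the partial sums converging in $C([0,T];D(\mcA_\de^{1/2}))$ and $\pt_{tt}$ converging in $C([0,T];L^2(\Om_\de))$. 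Closedness of the distributional time derivative identifies the limits as $y_\de,\pt_ty_\de,\pt_{tt}y_\de$. Translating the domains via the previous paragraph yields $y_\de\in C([0,T];H^2(\Om_\de))$, $\pt_ty_\de\in C([0,T];H_0^1(\Om_\de))$ and $\pt_{tt}y_\de\in C([0,T];L^2(\Om_\de))$, which is stronger than the claimed $L^2$/$H^1$/$H^2$-in-time memberships.

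The main obstacle is the identification $D(\mcA_\de)=H_0^1(\Om_\de)\cap H^2(\Om_\de)$ with norm equivalence, specifically control of the mixed derivative $\pt_{x_1x_2}u$ on a domain with corners. I would first try the tensor-basis computation: integrating by parts in the separable structure gives $\|\pt_{x_1}(x_2^{\al/2}\pt_{x_2}u)\|^2=\sum\la_n^{(1)}\la^{(2)}_{\de,k}u_{nk}^2\leq\sum(\la^\de_{nk})^2u_{nk}^2$. Since $x_2^{\al}\geq\de^\al$ and the lower-order term $\al x_2^{\al-1}\pt_{x_2}u$ is bounded on $\Om_\de$, this gives the full $H^2$ bound. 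The constant depends on $\de$, which is harmless here.
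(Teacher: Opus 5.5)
Your proposal is correct and follows essentially the paper's route: you expand $y_\de$ in the tensor basis $\Phi_n^{(1)}\Phi_{\de,k}^{(2)}$, bound the second spatial derivatives of the partial sums through the separable structure with $\de$-dependent constants coming from $x_2\geq\de$, and then differentiate termwise in $t$. Your version is slightly sharper in two ways: it needs only $M_\de<\iy$, whereas the paper's Step 1 uses the stronger $D(\mcA_\de^{3/2})\times D(\mcA_\de)$ bound, and it uses uniform convergence of the partial sums instead of weak compactness, which gives continuity in time.
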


\begin{proof}
	We prove this corollary by the following steps. 
	
	{\it Step 1}. Let $\de\in (0,\de_0)$. Arguing exactly as in Step 6 of the proof of Theorem \ref{08.05.T1}, and using the characterization of $D(\mathcal A_\delta^\f{3}{2})$, we obtain 
	\begin{equation*}
		\begin{split} 
			&\sum_{n,k=1}^\iy(\la_{nk}^\de)^2 \left[(\la_{nk}^\de)(y_{\de,nk}^0)^2+(y_{\de,nk}^1)^2\right]=\|\mcA_\de ^\f{3}{2} y^0\|_{L^2(\Om_\de)}^2+\|\mcA_\de y^1\|_{L^2(\Om_\de)}^2\\ 
			&\leq C\left( \|D^3 y^0\|_{L^\iy(\Om)}^2+\|D^2y^1\|_{L^\iy(\Om)}^2\right),  
		\end{split} 
	\end{equation*}
	where the positive constant $C$ depends only on $\alpha$ and $\delta_0$, and is independent of $\delta$. 
	
	{\it Step 2}. We prove $y_\de\in H^2(\Om_\de)$. 
	
	Since the weak solution $y_\de$ of \eqref{08.04.7}  has the form  \eqref{08.05.10}, for $a+b=2$ with $a,b\in\N$,  and 
	\begin{equation*}
		\begin{split}
			\pt_{x_1}^a\pt_{x_2}^b y_\de^m
			&\equiv \pt_{x_1}^a\pt_{x_2}^b \sum_{n,k=1}^m \left(y_{\de,nk}^0\cos\left(\sqrt{\la_{nk}^\de}t\right)+y_{\de,nk}^1\f{1}{\sqrt{\la_{nk}^{\de}}}\sin \left(\sqrt{\la_{nk}^\de}t\right)\right)\Phi_n^{(1)}\Phi_{\de,k}^{(2)}\\
			&=\sum_{n,k=1}^m \left(y_{\de,nk}^0\cos\left(\sqrt{\la_{nk}^\de}t\right)+y_{\de,nk}^1\f{1}{\sqrt{\la_{nk}^{\de}}}\sin \left(\sqrt{\la_{nk}^\de}t\right)\right)\left([\pt_{x_1}^a\Phi_n^{(1)}]\pt_{x_2}^b\Phi_{\de,k}^{(2)}\right)
		\end{split}
	\end{equation*}
	we obtain for $m\in\N^*$ large enough, 
	\begin{equation*}
		\begin{split}
			&\left\|\pt_{x_1}^a\pt_{x_2}^by_\de^m\right\|_{L^2(\Om_\de)}^2\\
			&=\left\| \sum_{n,k=1}^m \left(y_{\de,nk}^0\cos\left(\sqrt{\la_{nk}^\de}t\right)+y_{\de,nk}^1\f{1}{\sqrt{\la_{nk}^{\de}}}\sin \left(\sqrt{\la_{nk}^\de}t\right)\right)\left([\pt_{x_1}^a\Phi_n^{(1)}]\pt_{x_2}^b\Phi_{\de,k}^{(2)}\right)\right\|_{L^2(\Om_\de)}^2\\ 
			&\leq C\sum_{n,k=1}^m \left((y_{\de,nk}^0)^2+(y_{\de,nk}^1)^2\f{1}{\la_{nk}^\de}\right)(\la_n^{(1)})^a (\la_{\de,k}^{(2)})^b\leq C\sum_{n,k=1}^m \left((y_{\de,nk}^0)^2+(y_{\de,nk}^1)^2\f{1}{\la_{nk}^\de}\right)(\la_{nk}^\de)^2\\
			&\leq C\sum_{n,k=1}^\iy(\la_{nk}^\de)^2 \left[(\la_{nk}^\de)(y_{\de,nk}^0)^2+(y_{\de,nk}^1)^2\right]
		\end{split}
	\end{equation*}
	by 
	\begin{equation*}
		\begin{split}
			\left|\pt_{x_2}\Phi_{\de,k}^{(2)}\right|=x_2^{-\f{\al}{2}}\left|x_2^\f{\al}{2}\pt_{x_2}\Phi_{\de,k}^{(2)}\right|\leq \de^{-\f{\al}{2}}\left|x_2^\f{\al}{2}\pt_{x_2}\Phi_{\de,k}^{(2)}\right|
		\end{split}
	\end{equation*}
	and 
	\begin{equation*}
		\begin{split}
			\left|\pt_{x_2}^2\Phi_{\de,k}^{(2)}\right|
			&\leq x_2^{-\al}\left|x_2^\al \pt_{x_2}^2\Phi_{\de,k}^{(2)}+\al x_2^{\al-1} \pt_{x_2}\Phi_{\de,k}^{(2)}\right|+\al x_2^{-\f{\al}{2}-1}\left|x_2^\f{\al}{2}\pt_{x_2}\Phi_{\de,k}^{(2)}\right|\\
			&\leq \de^{-\al}\left|\mcA_\de^{(2)}\Phi_{\de,k}^{(2)}\right| +\al\de^{-\f{\al}{2}-1} \left|x_2^\f{\al}{2}\pt_{x_2}\Phi_{\de,k}^{(2)}\right|,  
		\end{split}
	\end{equation*}
	where the positive constant $C$ depends only on $\al$ and $\de$.  
	Combining this estimate with Step 1 yields
	\begin{equation*}
		\sup_{m\ge1}
		\left\|
		\partial_{x_1}^a\partial_{x_2}^b
		y_\delta^m
		\right\|_{L^2(\Omega_\delta)}
		<+\infty .
	\end{equation*}
	Since $y_\delta^m\to y_\delta$ in
	$L^2(\Omega_\delta)$, the uniform estimate above implies, by weak compactness,
	$ \partial_{x_1}^a\partial_{x_2}^by_\delta \in  L^2(\Omega_\delta)
	$ 
	for every $a+b=2$, $a,b\in\mathbb N$. Hence,
	\begin{equation*}
		y_\delta\in H^2(\Omega_\delta).
	\end{equation*}
	
	{\it Step 3}. We prove that
	\begin{equation*}
		y_\delta\in H^1(0,T;H_0^1(\Omega_\delta))
		\cap
		H^2(0,T;L^2(\Omega_\delta)).
	\end{equation*}
	
	Differentiating the series representation \eqref{08.05.10} term by term with respect to $t$, and arguing exactly as in Step 2, we obtain uniform bounds for the partial sums of $\partial_t y_\delta$  in
	$L^2(0,T;H_0^1(\Omega_\delta))$
	and of
	$\partial_t^2y_\delta$ in
	$L^2(0,T;L^2(\Omega_\delta))$.
	Passing to the limit yields
	which implies that
	\begin{equation*}
		y_\delta\in H^1(0,T;H_0^1(\Omega_\delta))
		\cap
		H^2(0,T;L^2(\Omega_\delta)).
	\end{equation*}
	This completes the proof.
\end{proof}

Denote 
\begin{equation*}
	E(\de;t)=\f{1}{2}\int_{\Om_\de} \left((\pt_ty_\de)^2+\nabla y_\de\cdot A_\de\nabla y_\de\right)\df x\df t, \quad E(0;t)=E(t), 
\end{equation*}
then 
\begin{equation}\label{08.07.6}
	E(\de;t)=E(\de;0) \mbox{ for all }t\in [0,T] \mbox{ and } \de\in [0,\de_0]. 
\end{equation}

\begin{corollary}\label{08.07.C1}
	Under the assumptions of Lemma \ref{08.06.L1}, there exists a positive
	constant $C$, independent of $\delta\in(0,\delta_0)$, such that
	\begin{equation*}
		\left\|
		\frac{\partial y_\delta}{\partial\nu_{A_\delta}}
		\right\|_{L^2((\partial\Omega-\Gamma_{\delta,2}^{\delta})\times(0,T))}
		\leq
		CT
		\left(
		\|y^0\|_{H_0^1(\Omega;w)}
		+
		\|y^1\|_{L^2(\Omega)}
		\right).
	\end{equation*}
\end{corollary}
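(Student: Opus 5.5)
The plan is the classical multiplier (hidden regularity) argument applied to the uniformly hyperbolic problem \eqref{08.04.7}. I will choose a multiplier vector field that vanishes near the bottom boundary $\Gamma_{\delta,2}^{\delta}$, so that no boundary term from the degenerate side appears. I will also make sure that every interior term is controlled by the energy with constants independent of $\delta$. The spectral bound of Step 4 in the proof of Theorem \ref{08.05.T1} is not suitable here: it controls the normal derivative by $CTM$, and $M$ involves $\|\mathcal A y^0\|$ and $\|y^1\|_{H_0^1}$ rather than the energy norms in the statement.

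\textbf{Step 1 (multiplier and justification).} Fix $\varphi\in C^\infty([0,1])$ with $0\le\varphi\le 1$, $\varphi=0$ on $[0,\frac14]$ and $\varphi(1)=1$. Set
\begin{equation*}
q(x)=(x_1,\varphi(x_2)).
\end{equation*}
Then $q\cdot\nu=1$ on $\Gamma_{\delta,1}^{\pm1}$ and on $\Gamma_{\delta,2}^{1}$, and $q\equiv(x_1,0)$ near $\Gamma_{\delta,2}^{\delta}$, so $q\cdot\nu=0$ there. Since $\delta<\delta_0\le\frac18$, the bottom boundary lies in $\{x_2<\frac14\}$. I multiply \eqref{08.04.7} by $q\cdot\nabla y_\delta$ and integrate over $Q_\delta$. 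Lemma \ref{08.06.L1} gives $y_\delta\in L^2(0,T;H^2(\Omega_\delta))\cap H^1(0,T;H_0^1(\Omega_\delta))\cap H^2(0,T;L^2(\Omega_\delta))$, which justifies all integrations by parts in $x$ and $t$, including the traces of $\nabla y_\delta$ on $\Sigma_\delta$.

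\textbf{Step 2 (the identity).} On $\Sigma_\delta$ we have $y_\delta=0$, hence $\nabla y_\delta=(\partial_\nu y_\delta)\nu$. On the sides and on the top, $\nu\cdot A\nu=1$ (on the top because $x_2=1$), so $\partial y_\delta/\partial\nu_{A_\delta}=\partial_\nu y_\delta$ there. The standard computation then yields
\begin{equation*}
\frac12\int_0^T\!\!\int_{\partial\Omega_\delta}(q\cdot\nu)\Big|\frac{\partial y_\delta}{\partial\nu_{A_\delta}}\Big|^2
=\Big[\int_{\Omega_\delta}\partial_t y_\delta\,q\cdot\nabla y_\delta\Big]_0^T+\iint_{Q_\delta}\Big(\tfrac12\operatorname{div}q\,\big((\partial_t y_\delta)^2-\nabla y_\delta\cdot A\nabla y_\delta\big)+A\nabla y_\delta\cdot Dq\,\nabla y_\delta-\tfrac12 (q\cdot\nabla A)\nabla y_\delta\cdot\nabla y_\delta\Big).
\end{equation*}
Because $q\cdot\nu\ge0$ on $\partial\Omega_\delta$ and $q\cdot\nu=1$ on $\partial\Omega_\delta-\Gamma_{\delta,2}^{\delta}$, the left-hand side dominates $\frac12\|\partial y_\delta/\partial\nu_{A_\delta}\|^2_{L^2((\partial\Omega_\delta-\Gamma^\delta_{\delta,2})\times(0,T))}$.

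\textbf{Step 3 (uniform bounds) — the main obstacle.} The delicate point is that each interior term must be bounded by $C\,E(\delta;t)$ with $C$ independent of $\delta$, despite the weight $x_2^\alpha$.

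I will check the terms one by one.
\begin{itemize}
\item $Dq=\operatorname{diag}(1,\varphi')$, so $A\nabla y\cdot Dq\nabla y=(\partial_{x_1}y)^2+\varphi' x_2^\alpha(\partial_{x_2}y)^2\le C\,\nabla y\cdot A\nabla y$.
\item $\operatorname{div}q=1+\varphi'$ is bounded.
\item $q\cdot\nabla A=\operatorname{diag}(0,\varphi\,\alpha x_2^{\alpha-1})$. Since $\varphi$ vanishes for $x_2<\frac14$, we get $\varphi\,\alpha x_2^{\alpha-1}\le C x_2^{\alpha}$, so this term is also $\le C\,\nabla y\cdot A\nabla y$.
\end{itemize}
This is exactly why the cutoff $\varphi$ is needed: with $q_2=x_2$ instead, the term $x_2\,\partial_{x_2}(x_2^\alpha)=\alpha x_2^\alpha$ would be harmless, but $q\cdot\nu$ would not vanish at the bottom.

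For the time-boundary term, $|q_2\partial_{x_2}y|\le C x_2^{\alpha/2}|\partial_{x_2}y|$ on the support of $\varphi$, so it is bounded by $C\,E(\delta;t)$.

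\textbf{Step 4 (conclusion).} By energy conservation \eqref{08.07.6}, the right-hand side of the identity is at most $C(1+T)E(\delta;0)$. Since $y^0,y^1$ are supported in $\Omega_\delta$, we have $E(\delta;0)\le\frac12(\|y^0\|^2_{H_0^1(\Omega;w)}+\|y^1\|^2_{L^2(\Omega)})$. Taking square roots gives
\begin{equation*}
\Big\|\frac{\partial y_\delta}{\partial\nu_{A_\delta}}\Big\|_{L^2((\partial\Omega_\delta-\Gamma^\delta_{\delta,2})\times(0,T))}\le C(1+T)^{1/2}\big(\|y^0\|_{H_0^1(\Omega;w)}+\|y^1\|_{L^2(\Omega)}\big),
\end{equation*}
with $C$ independent of $\delta$. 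This is at most $CT(\cdots)$ once $T$ is bounded below, say $T\ge1$, after enlarging $C$. Without a lower bound on $T$, the argument yields only the constant $C(1+T)^{1/2}$, not the stated factor $CT$. The boundary $\partial\Omega-\Gamma^\delta_{\delta,2}$ in the statement is understood as $\partial\Omega_\delta-\Gamma^\delta_{\delta,2}$, that is, $\Gamma_{\delta,1}^{\pm1}\cup\Gamma_{\delta,2}^{1}$.
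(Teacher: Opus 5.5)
Your proof is correct and is essentially the paper's argument. The paper uses the localized multiplier $\zeta(x_1)\partial_{x_1}y_\delta$ to get the estimate on $\Gamma_{\delta,1}^{-1}$ and says the same works on $\Gamma_{\delta,1}^{1}$ and $\Gamma_{\delta,2}^{1}$; you handle all three pieces at once with $q=(x_1,\varphi(x_2))$, which also makes explicit why the $q\cdot\nabla A$ term is harmless once the field vanishes near $x_2=\delta$. Your closing caveat about the factor $T$ is unnecessary here: the statement only asks for $C$ independent of $\delta$, so $C(1+T)^{1/2}=C_T\,T$ with $C_T=C(1+T)^{1/2}/T$ is allowed, and the paper itself reaches $CT\,E(\delta;0)$ only by absorbing the $O(E(\delta;0))$ time-boundary terms in the same way.
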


\begin{proof}
	Let $\zeta=\zeta(x_1)\in C^\iy(\R), 0\leq \zeta\leq 1$ such that 
	\begin{equation*}
		\zeta=1 \mbox{ on } (-\iy, 0), \quad \zeta=0 \mbox{ on } (2^{-1},+\iy), \quad |\pt_{x_1}\zeta|\leq 4 \mbox{ on } \R. 
	\end{equation*}
	Multiplying \eqref{08.04.7} by $\zeta \pt_{x_1}y_\de$, integrating on $Q_\de$, we obtain 
	\begin{equation*}
		0=\iint_{Q_\de} (\pt_{tt}y_\de) \zeta \pt_{x_1}y_\de\df x\df t-\iint_{Q_\de} [\Div(A_\de\nabla y_\de)] \zeta \pt_{x_1}y_\de\df x\df t\equiv A_1+A_2. 
	\end{equation*}
	
	Since $y_\de=0$ on $\Sigma_\de$, we obtain  $\pt_ty_\de=0$ on $\Sigma_\de$, and 
	\begin{equation*}
		\begin{split}
			A_1
			&=\int_{\Om_\de} (\pt_ty_\de) \zeta \pt_{x_1}y_\de \df x\bigg|_{t=0}^{t=T}+\f{1}{2}\iint_{Q_\de} (\pt_ty_\de)^2\pt_{x_1}\zeta \df x\df t. 
		\end{split}
	\end{equation*}
	
	From \eqref{08.05.10} and Lemma \ref{08.06.L1},   since $y_\delta=0$ on
	$\Gamma_{\delta,1}^{-1}\cup\Gamma_{\delta,1}^{1}$,
	we have
	\begin{equation*}
		\pt_{x_1}y_\de=0 \mbox{ on }\Ga_{\de,2}^\de\cup \Ga_{\de,2}^1, \quad \partial_{x_2}y_\delta=0 \text{ on }
		\Gamma_{\delta,1}^{-1}\cup\Gamma_{\delta,1}^{1}.
	\end{equation*}
	Then 
	\begin{equation*}
		\begin{split}
			A_2
			&=\f{1}{2}\iint_{\Ga_{\de,1}^{-1}\ts (0,T)}(\pt_{x_1}y_\de)^2\df S\df t\\
			&\hspace{4.5mm}+\iint_{Q_\de} (\pt_{x_1}\zeta)(\pt_{x_1}y_\de)^2\df x\df t-\f{1}{2}\iint_{Q_\de} (\pt_{x_1}\zeta)\nabla y_\de\cdot A_\de \nabla y_\de\df x\df t. 
		\end{split}
	\end{equation*}
	Hence
	\begin{equation*} 
		\begin{aligned}
			\frac12
			\iint_{\Gamma_{\delta,1}^{-1}\times(0,T)}
			|\partial_{x_1}y_\delta|^2\df S\df t
			&=
			-\left.
			\int_{\Omega_\delta}
			\partial_ty_\delta
			\zeta\partial_{x_1}y_\delta\df x
			\right|_0^T 
			-\frac12
			\iint_{Q_\delta}
			(\partial_{x_1}\zeta)
			|\partial_ty_\delta|^2\df x\df t
			\\
			&\quad
			-\iint_{Q_\delta}
			(\partial_{x_1}\zeta)
			|\partial_{x_1}y_\delta|^2\df x\df t 
			+\frac12
			\iint_{Q_\delta}
			(\partial_{x_1}\zeta)
			\nabla y_\delta\cdot A_\delta\nabla y_\delta\df x\df t .
		\end{aligned}
	\end{equation*}
	Hence, from \eqref{08.07.6} and $|\pt_{x_1}\zeta|\leq 4$, we get 
	\begin{equation*}
		\begin{split}
			\iint_{\Ga_{\de,1}^{-1}\ts (0,T)}(\pt_{x_1}y_\de)^2\df S\df t
			&\leq CTE(\de;0),  
		\end{split}
	\end{equation*}
	where the positive constant $C$ is absolute.  
	
	The same argument gives the corresponding estimates on
	$\Gamma_{\delta,1}^{1}$ and $\Gamma_{\delta,2}^{1}$.
	Therefore, the desired boundary estimate follows. 
	This completes the proof.
\end{proof}

\begin{corollary}\label{08.07.C2}
	Under the assumptions of Theorem \ref{08.05.T1}, there exists a
	constant $C>0$, independent of $\delta$ and $T$, such that
	\begin{equation*}
		\left\|
		\frac{\partial y}{\partial \nu_A}
		\right\|_{L^2((\partial\Omega-\Gamma_2^0)\times(0,T))}
		\leq
		CT
		\left(
		\|y^0\|_{H_0^1(\Omega;w)}
		+
		\|y^1\|_{L^2(\Omega)}
		\right).
	\end{equation*}
\end{corollary}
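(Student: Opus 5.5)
The plan is to pass to the limit $\delta\to0^+$ in Corollary \ref{08.07.C1}, using the convergence of conormal derivatives from Theorem \ref{08.05.T1}. Fix $y^0,y^1\in C_0^\infty(\Omega)$ and take $\delta\in(0,\delta_0)$ with $\delta_0$ as in Lemma \ref{08.06.L1}. Then Corollary \ref{08.07.C1} applies to $y_\delta$ and gives
\begin{equation*}
\left\|\frac{\partial y_\delta}{\partial\nu_{A_\delta}}\right\|_{L^2((\partial\Omega_\delta-\Gamma_{\delta,2}^{\delta})\times(0,T))}\le CT\left(\|y^0\|_{H_0^1(\Omega;w)}+\|y^1\|_{L^2(\Omega)}\right),
\end{equation*}
with $C$ independent of $\delta$ and $T$. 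The constant in that proof comes only from $|\partial_{x_1}\zeta|\le 4$ and from the conserved energy \eqref{08.07.6}. Moreover, $E(\delta;0)=\frac12(\|y^1\|_{L^2}^2+\|y^0\|_{H_0^1(\Omega_\delta;w_\delta)}^2)$ coincides with $E(0;0)$ once $\delta<\delta_0$, since the supports of $y^0,y^1$ lie in $\Omega_\delta$.

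To compare boundaries, I identify the lateral pieces: $\Gamma_{\delta,1}^{\pm1}=\{\pm1\}\times(\delta,1)\subset\Gamma_1^{\pm1}$, and $\Gamma_{\delta,2}^1=\Gamma_2^1$. Extending the conormal derivative of $y_\delta$ by zero on $\{\pm1\}\times(0,\delta]$, consistently with $E_\delta^0$, the left side above equals $\|\partial E_\delta^0y_\delta/\partial\nu_A\|_{L^2((\partial\Omega-\Gamma_2^0)\times(0,T))}$. By \eqref{08.05.17}, which I read as convergence in $L^2((\partial\Omega-\Gamma_2^0)\times(0,T))$ and in particular in the $C([0,T];L^2)$ sense, hence in $L^2$ in time on the bounded interval, we have $\partial E_\delta^0y_\delta/\partial\nu_A\to\partial y/\partial\nu_A$. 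Norms converge, so letting $\delta\to0^+$ in the uniform bound gives the claimed inequality.

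The main obstacle is checking that the $\delta$-independence claimed in Corollary \ref{08.07.C1} is genuine, in particular on $\Gamma_{\delta,2}^1$. There the multiplier is $\zeta(x_2)\partial_{x_2}y_\delta$, with $\zeta$ vanishing for $x_2<\frac12$, so the degeneracy near $x_2=\delta$ never enters. The boundary term produced is $\frac12\iint x_2^\alpha(\partial_{x_2}y_\delta)^2$ at $x_2=1$. The interior terms involve $\zeta' x_2^\alpha$ and $\partial_{x_2}(\zeta x_2^\alpha)$, both bounded on $(\frac12,1)$ uniformly in $\delta$, and are controlled by $TE(\delta;0)$. On the $x_1$-faces, the cut-off $\zeta(x_1)$ must also handle the top and bottom faces. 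These carry no contribution because $\partial_{x_1}y_\delta=0$ there, and the identity $\partial_{x_2}y_\delta=0$ on $\Gamma_{\delta,1}^{\pm1}$, justified by the $H^2$ regularity of Lemma \ref{08.06.L1}, kills the cross term.

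A secondary point is the factor $T$ versus $\sqrt T$. The interior terms are $O(TE)$ and the time-boundary terms are $O(E)$, so the squared norm is $\le C(1+T)E$, and the stated form $CT(\cdots)$ holds for $T$ bounded below. I would keep the constant as it comes from Corollary \ref{08.07.C1}, since the limit passage does not change it.
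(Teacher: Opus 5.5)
Your proposal is correct and is essentially the paper's proof. Like the paper, you take the $\delta$-uniform bound of Corollary \ref{08.07.C1}, identify $\partial\Omega_\delta-\Gamma_{\delta,2}^\delta$ with $\partial\Omega-\Gamma_2^0$ by zero extension, and pass to the limit using the strong convergence \eqref{08.05.17} of the conormal derivatives. Your remark on the time factor is also right: the time-boundary term in the proof of Corollary \ref{08.07.C1} is $O(E(\delta;0))$, not $O(TE(\delta;0))$, so one actually gets $C\sqrt{1+T}$, and the form $CT(\cdots)$ with $C$ independent of $T$ holds only for $T$ bounded below, which is all the later density argument with $T>2\sqrt2/(2-\alpha)$ needs.
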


\begin{proof}
	By \eqref{08.05.17}, we have
	\begin{equation*}
		\frac{\partial y_\delta}{\partial\nu_{A_\delta}}
		\ra
		\frac{\partial y}{\partial\nu_A}
		\quad\text{strongly in }
		L^2((\partial\Omega-\Gamma_N^0)\times(0,T))
	\end{equation*}
	as $\delta\to0^+$. On the other hand, Corollary \ref{08.07.C1} gives the uniform estimate
	\begin{equation*}
		\left\|
		\frac{\partial y_\delta}{\partial\nu_{A_\delta}}
		\right\|_{L^2((\partial\Omega-\Gamma_{\delta,2}^{\delta})\times(0,T))}
		\leq
		CT
		\left(
		\|y^0\|_{H_0^1(\Omega;w)}
		+
		\|y^1\|_{L^2(\Omega)}
		\right),
	\end{equation*}
	where $C$ is independent of $\delta$ and $T$. Passing to the limit as $\delta\to0^+$ and using the above strong convergence, we obtain
	\begin{equation*}
		\left\|
		\frac{\partial y}{\partial\nu_A}
		\right\|_{L^2((\partial\Omega-\Gamma_2^0)\times(0,T))}
		\leq
		CT
		\left(
		\|y^0\|_{H_0^1(\Omega;w)}
		+
		\|y^1\|_{L^2(\Omega)}
		\right).
	\end{equation*}
	This completes the proof.
\end{proof}

\begin{remark}\label{08.06.R1}
	In Theorem \ref{08.05.T1} and Corollary \ref{08.07.C1}, no estimates are established for
	$\frac{\partial y_\delta}{\partial \nu_A}$ on $\Gamma_{\delta,2}^\delta$
	or
	$\frac{\partial y}{\partial \nu_A}$ on $\Gamma_2^0$,
	as they are not needed in the subsequent analysis. Nevertheless, the proof above shows immediately that
	\begin{equation*}
	\frac{\partial y_\delta}{\partial \nu_A}
	\in
	L^2(\Gamma_{\delta,2}^{\delta}\times(0,T)), \mbox{ for all } \de\in (0,\de_0].
	\end{equation*}
\end{remark}

\section{Observability inequality}\label{S4}

In this section, we establish an observability inequality for equation \eqref{01.08.1} by passing to the limit in the observability inequalities satisfied by the approximating uniformly hyperbolic equation \eqref{08.04.7}. 

Let $y^0,y^1\in C_0^\infty(\Omega)$, and 
\begin{equation*}
	\de_0= \f{1}{2}\min\left\{\operatorname{dist}(\operatorname{supp}y^0,\{0\}),
	\operatorname{dist}(\operatorname{supp}y^1,\{0\})\right\}.
\end{equation*} 
We first establish an observability estimate for the approximating problem \eqref{08.04.7}. Passing to the limit as $\delta\to0^+$ will then yield the observability inequality for the degenerate equation.

We choose the multiplier
\begin{equation*}
	H(x)=x\cdot\nabla y_\delta.
\end{equation*}
Multiplying both sides of \eqref{08.04.7} by $H$ and integrating over $Q_\delta=\Omega\times(0,T)$, we obtain
\begin{equation}\label{08.06.2}
	0=\iint_{Q_\delta} (\partial_{tt}y_\delta)H\,dx\,dt
	-\iint_{Q_\delta}\Div(A_\delta\nabla y_\delta)H\,dx\,dt
	\equiv A_1+A_2.
\end{equation}
In the following computations, all integrations by parts are justified by Lemma  \ref{08.06.L1}.  

Directly compute $\pt_ty_\de=0$ on $\pt \Om_\de\ts (0,T)$ by \eqref{08.05.10}, we obtain 
\begin{equation*}
	\begin{split}
		A_1
		&=\int_{\Om_\de} (\pt_ty_\de)(x\cdot \nabla y_\de)\df x\bigg|_{t=0}^{t=T}+\iint_{Q_\de} (\pt_ty_\de)^2\df x\df t. 
	\end{split}
\end{equation*}

Since 
\begin{equation*}
	\begin{split}
		A_2
		&=-\iint_{\pt Q_\de} (\nabla y_\de\cdot A_\de\nu)(x\cdot \nabla y_\de)\df S\df t+\f{1}{2}\iint_{\pt Q_\de} (\nabla y_\de\cdot A_\de\nabla y_\de)(x\cdot \nu)\df S\df t\\
		&\hspace{4.5mm}-\f{\al}{2}\iint_{Q_\de} x_2^\al (\pt_{x_2}y_\de)^2\df x\df t 
	\end{split}
\end{equation*}
by integration by parts, 
note that from
\begin{equation*}
	\begin{split}
		\nabla y_\de=( \pt_{x_1}y_\de, 0) \mbox{ on }\Ga_{\de,1}^{\pm1}, \quad \nabla y_\de=(0,  \pt_{x_2}y_\de) \mbox{ on }\Ga_{\de,2}^\de\cup \Ga_{\de,2}^1
	\end{split}
\end{equation*}
by $y_\delta=0$ on $\partial\Omega_\delta$, all tangential derivatives vanish. Hence, we obtain $\nabla y_\de\parallel \nu$ on $\pt Q_\de$, and 
\begin{equation*}
	\iint_{\pt Q_\de} (\nabla y_\de\cdot A_\de\nu)(x\cdot \nabla y_\de)\df S\df t =\iint_{\pt Q_\de}   (\nabla y_\de\cdot A_\de \nabla y_\de) (x\cdot \nu)\df S\df t, 
\end{equation*}
then 
\begin{equation*}
	\begin{split}
		A_2=-\f{1}{2}\iint_{\pt Q_\de} (\nabla y_\de\cdot A_\de\nabla y_\de) (x\cdot \nu)\df S\df t-\f{\al}{2}\iint_{Q_\de} x_2^\al (\pt_{x_2}y_\de)^2\df x\df t. 
	\end{split}
\end{equation*}

Combining $A_1$ and $A_2$, and $x\cdot \nu=-x_2<0$ on $\Ga_{\de,2}^\de$ (the corresponding boundary integral is nonnegative and may be discarded), we get
\begin{equation}\label{08.06.3}
	\begin{split}
		&\f{1}{2}\iint_{(\pt\Om_\de-\Ga_{\de,2}^\de)\ts (0,T)} (\nabla y_\de\cdot A_\de\nabla y_\de)(x\cdot \nu)\df S\df t\\
		&\geq \int_{\Om_\de} (\pt_t y_\de) (x\cdot \nabla y_\de)\df x\bigg|_{t=0}^{t=T}+\iint_{Q_\de} (\pt_ty_\de)^2\df x\df t-\f{\al}{2}\iint_{Q_\de} x_2^\al (\pt_{x_2}y_\de)^2\df x\df t. 
	\end{split}
\end{equation}

Rearranging the terms gives
\begin{equation}\label{08.06.4}
	\begin{split}
		&\iint_{Q_\de}(\pt_ty_\de)^2\df x\df t-\f{\al}{2}\iint_{Q_\de}x_2^\al (\pt_{x_2}y_\de)^2\df x\df t\\
		&=\f{2-\al}{4}\iint_{Q_\de}   \left((\pt_ty_\de)^2+\nabla y_\de\cdot A_\de\nabla y_\de\right)\df x\df t+\f{2+\al}{4}\iint_{Q_\de} (\pt_ty_\de)^2 \df x\df t\\
		&\hspace{4.5mm}-\f{2-\al}{4}\iint_{Q_\de} \nabla y_\de \cdot A_\de \nabla y_\de\df x\df t-\f{\al}{2}\iint_{Q_\de} x_2^\al (\pt_{x_2}y_\de)^2\df x\df t\\
		&\geq \f{2-\al}{2}TE(\de;0)+\f{2+\al}{4}\iint_{Q_\de} \left((\pt_ty_\de)^2-\nabla y_\de \cdot A_\de \nabla y_\de\right) \df x\df t
	\end{split}
\end{equation}
by $\nabla y_\de\cdot A_\de\nabla y_\de=(\pt_{x_1}y_\de)^2+x_2^\al (\pt_{x_2}y_\de)^2$ on $Q_\de$. 

Multiplying $y_\de$ on the both sides of \eqref{08.04.7}, integrating on $Q_\de$, integration by parts, we get 
\begin{equation*}
	\begin{split}
		\int_{\Om_\de}(\pt_ty_\de)y_\de\df x\bigg|_{t=0}^{t=T}=\iint_{Q_\de} \left((\pt_ty_\de)^2-\nabla y_\de\cdot A_\de\nabla y_\de\right)\df x\df t. 
	\end{split}
\end{equation*}
Motivated by the previous identities, we introduce the auxiliary functional
\begin{equation*}
	X(t)=\int_{\Om_\de} (\pt_ty_\de)\left(x\cdot \nabla y_\de+\f{2+\al}{4}y_\de\right)\df x\df t, 
\end{equation*}
then 
\begin{equation}\label{08.06.5}
	\begin{split}
		X(t)\bigg|_{t=0}^{t=T} +\f{2-\al}{2}TE(\de;0)\leq \f{1}{2}\iint_{(\pt\Om_\de-\Ga_{\de,2}^\de)\ts (0,T)}(\nabla y_\de\cdot A_\de\nabla y_\de)(x\cdot\nu)\df S\df t. 
	\end{split}
\end{equation}

It is easily verified that
\begin{equation*}
	\begin{split}
		|X(t)|\leq \f{\be}{2}\int_{\Om_\de} (\pt_ty_\de)^2\df x+\f{1}{2\be}\int_{\Om_\de} \left|x\cdot \nabla y_\de+\f{2+\al}{4}y_\de\right|^2\df x
	\end{split}
\end{equation*}
for all $\be>0$. Note that from 
\begin{equation*}
	\begin{split}
		|x\cdot \nabla y_\de|^2=|x_1\pt_{x_1} y_\de +x_2\pt_{x_2}y_\de|^2\leq 2(\pt_{x_1}y_\de)^2+2x_2^\al (\pt_{x_2}y_\de)^2=2\nabla y_\de\cdot A_\de\nabla y_\de 
	\end{split}
\end{equation*}
we get 
\begin{equation*}
	\begin{split}
		&\int_{\Om_\de} \left|x\cdot\nabla y_\de+\f{2+\al}{4}y_\de\right|^2\df x\\
		&\leq 2\int_{\Om_\de} (\nabla y_\de\cdot A_\de\nabla y_\de)\df x+\left(\f{2+\al}{4}\right)^2\int_{\Om_\de} y_\de^2\df x+\f{2+\al}{2}\int_{\Om_\de} (x\cdot \nabla y_\de) y_\de\df x.  
	\end{split}
\end{equation*}
Together with this and 
\begin{equation*}
	\begin{split}
		\int_{\Om_\de} (x\cdot \nabla y_\de)y_\de\df x=-\int_{\Om_\de} y_\de^2\df x, 
	\end{split}
\end{equation*}
we deduce that 
\begin{equation*}
	\begin{split}
		\int_{\Om_\de} \left|x\cdot\nabla y_\de+\f{2+\al}{4}y_\de\right|^2\df x
		&\leq 2\int_{\Om_\de} (\nabla y_\de\cdot A_\de\nabla y_\de)\df x
	\end{split}
\end{equation*}
by $(\f{2+\al}{4})^2-\f{2+\al}{2}=\f{(2+\al)(\al-6)}{16}\leq 0$. Hence
\begin{equation*}
	|X(t)|\leq \f{\be}{2}\int_{\Om_\de} (\pt_ty_\de)^2\df x+\f{1}{\be}\int_{\Om_\de} (\nabla y_\de\cdot A_\de \nabla y_\de)\df x. 
\end{equation*}
Take $\be=\sqrt{2}$, we get 
\begin{equation*}
	|X(t)|\leq \sqrt{2}E(\de;t). 
\end{equation*}
Together with this and \eqref{08.06.5}, we obtain 
\begin{equation}\label{08.06.6}
		\left(\f{2-\al}{2}T-\sqrt{2}\right)E(\de;0)\leq \f{1}{2}\iint_{(\pt\Om_\de-\Ga_{\de,2}^\de)\ts (0,T)}(\nabla y_\de\cdot A_\de\nabla y_\de)(x\cdot \nu)\df S\df t. 
\end{equation}
From \eqref{08.05.17} and $E(\de;0)=E(0)$, since the constants in \eqref{08.06.6} do not depend on $\de\in (0,\de_0)$, passing to the limit in \eqref{08.06.6} by using the convergence result \eqref{08.05.17}, together with $E(\delta;0)=E(0)$,  yields 
\begin{equation*}
	\begin{split}
		\left(\f{2-\al}{2}T-\sqrt{2}\right)E(0)\leq \f{1}{2}\iint_{(\pt\Om-\Ga_{2}^0)\ts (0,T)}(\nabla y\cdot A\nabla y)(x\cdot \nu)\df S\df t, 
	\end{split}
\end{equation*}
where 
\begin{equation*}
	E(t)=\f{1}{2}\iint_Q \left((\pt_ty)^2+\nabla y\cdot A\nabla y\right)\df x, \mbox{ for all } t\in [0,T]. 
\end{equation*}
This is the following theorem. 

\begin{theorem}\label{08.06.T1}
	Let $y^0,y^1\in C_0^\iy(\Om)$. Then for $T>\f{2\sqrt{2}}{2-\al}$, we have the following observability inequality 
	\begin{equation*}
		\left(
		\frac{2-\alpha}{2}T-\sqrt2
		\right)
		E(0)
		\le
		\frac12
		\iint_{(\partial\Omega-\Gamma_2^0)\times(0,T)}
		(\nabla y\cdot A\nabla y)
		(x\cdot\nu)
		\,dSdt.
	\end{equation*}
	where 
	\begin{equation*}
		E(t)=\f{1}{2}\iint_Q \left((\pt_ty)^2+\nabla y\cdot A\nabla y\right)\df x, \mbox{ for all } t\in [0,T], 
	\end{equation*}
	and $y$ is the weak solution of \eqref{01.08.1} with initial data $(y^0,y^1)$. 
\end{theorem}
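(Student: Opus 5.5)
The proof passes to the limit $\delta\to0^+$ in the uniform observability estimate \eqref{08.06.6} for the approximating problems \eqref{08.04.7}. The key point is that the constants in \eqref{08.06.6} do not depend on $\delta$. Fix $y^0,y^1\in C_0^\infty(\Omega)$ and $\delta_0$ as in Lemma \ref{08.06.L1}. For $\delta\in(0,\delta_0)$, Lemma \ref{08.06.L1} gives $y_\delta\in L^2(0,T;H^2(\Omega_\delta))\cap H^1(0,T;H_0^1(\Omega_\delta))\cap H^2(0,T;L^2(\Omega_\delta))$. This regularity justifies all integrations by parts in the multiplier computation with $H=x\cdot\nabla y_\delta$, carried out above.

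The derivation of \eqref{08.06.6} proceeds in the following order.
\begin{enumerate}
\item The identity \eqref{08.06.2} leads to \eqref{08.06.3}. Here the boundary term on $\Gamma_{\delta,2}^\delta$ has a favorable sign, because $x\cdot\nu=-\delta<0$ there, so it can be discarded.
\item The equipartition identity obtained from the multiplier $y_\delta$ is combined with \eqref{08.06.4}, giving \eqref{08.06.5} for the functional $X(t)$.
\item The bound $|X(t)|\le\sqrt2 E(\delta;t)$ is used. It rests on $|x\cdot\nabla y_\delta|^2\le 2\nabla y_\delta\cdot A_\delta\nabla y_\delta$, which holds since $x_2^{2}\le x_2^\alpha$ on $\Omega_\delta$, and on $\int_{\Omega_\delta}(x\cdot\nabla y_\delta)y_\delta\,dx=-\int_{\Omega_\delta} y_\delta^2\,dx$.
\item Energy conservation \eqref{08.07.6} then yields \eqref{08.06.6}. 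Since $y_\delta^0=y^0$ and $y_\delta^1=y^1$ are supported in $\Omega_\delta$, we have $E(\delta;0)=E(0)$.
\end{enumerate}

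Next, let $\delta\to0^+$ in \eqref{08.06.6}. The left-hand side is constant in $\delta$. On the right-hand side, the boundary pieces $\Gamma_{\delta,1}^{\pm1}$ and $\Gamma_{\delta,2}^1$ of $\partial\Omega_\delta$ differ from $\Gamma_1^{\pm1}$ and $\Gamma_2^1$ only by segments of length $\delta$ near the corners. Tangential derivatives vanish there, so on each piece
\[
\nabla y_\delta\cdot A_\delta\nabla y_\delta=\Big|\frac{\partial y_\delta}{\partial\nu_A}\Big|^2 a^{-1},
\]
where $a=1$ on the lateral sides and $a=x_2^\alpha=1$ on the top. Moreover, $x\cdot\nu$ is bounded, with $|x\cdot\nu|\le1$. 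After extending by zero via $E_\delta^0$, the integrand equals $|\partial_{\nu_A}(E_\delta^0y_\delta)|^2(x\cdot\nu)$ on $\partial\Omega-\Gamma_2^0$. The strong convergence \eqref{08.05.17} of Theorem \ref{08.05.T1} in $L^2((\partial\Omega-\Gamma_2^0)\times(0,T))$ gives convergence of these quadratic integrals, since strong $L^2$ convergence implies convergence of $\int |f_\delta|^2 g$ for bounded $g$. This produces the stated inequality for $y$. The assumption $T>\frac{2\sqrt2}{2-\alpha}$ only ensures that the constant on the left is positive.

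The main obstacle is the boundary limit near the corners $(\pm1,0)$, where $\Gamma_{\delta,1}^{\pm1}=\{\pm1\}\times(\delta,1)$ shrinks towards $\Gamma_1^{\pm1}$ and the weight degenerates. I would handle it by using the uniform bounds of Corollary \ref{08.07.C1} together with the spectral representation from Step 8 of the proof of Theorem \ref{08.05.T1}. Truncating to finitely many modes, each $\partial_{x_1}y_\delta(\pm1,\cdot)$ is a finite combination of $\Phi^{(2)}_{\delta,k}$. Each of these converges in $L^2(\Lambda_2)$ by Lemma \ref{08.05.L1}, and the tails are uniformly small by the weighted Cauchy--Schwarz estimates of Steps 3--4, so the $\delta$-strip contributes nothing in the limit.
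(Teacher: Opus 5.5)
Your outline follows the paper's proof step by step, but step 4 does not follow from step 3. Steps 1–3 are fine, as is the passage $\delta\to0^+$ via \eqref{08.05.17}. Inequality \eqref{08.06.5} needs a lower bound on $X(T)-X(0)$, and both endpoints contribute, so $|X(t)|\le\sqrt2E(\delta;t)=\sqrt2E(\delta;0)$ only gives
\[
X(T)-X(0)\ge-|X(T)|-|X(0)|\ge-2\sqrt2\,E(\delta;0).
\]
What the multiplier argument actually proves is therefore
\[
\Big(\frac{2-\alpha}{2}T-2\sqrt2\Big)E(\delta;0)\le\frac12\iint_{(\partial\Omega_\delta-\Gamma_{\delta,2}^\delta)\times(0,T)}(\nabla y_\delta\cdot A_\delta\nabla y_\delta)(x\cdot\nu)\,dS\,dt .
\]
After the limit, this gives the observability inequality with constant $\frac{2-\alpha}{2}T-2\sqrt2$, i.e.\ only for $T>\frac{4\sqrt2}{2-\alpha}$. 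The paper makes the same slip when it passes from $|X(t)|\le\sqrt2E(\delta;t)$ to \eqref{08.06.6}, so citing \eqref{08.06.6} does not close the gap.

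A sharper bound on $X$ cannot repair this, because the inequality as stated fails for $\frac{2\sqrt2}{2-\alpha}<T<\frac{4}{2-\alpha}$. To see this, take data $\Phi_1^{(1)}(x_1)v(x_2)$, with $v$ a finite combination of modes $\Phi_k^{(2)}$, $k>N$.
\begin{itemize}
\item On $\Gamma_1^{\pm1}$ the integrand is $\lambda_1^{(1)}v^2$, so that part of the right-hand side is $O\big(TE(0)/\lambda_N^{(2)}\big)$.
\item On $\Gamma_2^1$ it equals $2\kappa\int_0^T\big|\sum_k\pm\sqrt{\lambda_k^{(2)}}A_{1k}(t)\big|^2dt$. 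This is a nonharmonic Fourier series in the frequencies $\pm\sqrt{\lambda_1^{(1)}+\lambda_k^{(2)}}$, whose gap tends to $\kappa\pi=\frac{(2-\alpha)\pi}{2}$.
\item By the necessary upper-density condition for Ingham-type inequalities, for $T<\frac{2}{\kappa}=\frac{4}{2-\alpha}$ and any $N$, $\epsilon>0$, such data can be chosen with the $\Gamma_2^1$ term at most $\epsilon E(0)$. Geometrically, $\frac{4}{2-\alpha}=2\int_0^1x_2^{-\alpha/2}\,dx_2$ is the round-trip time of vertical rays reflecting off the unobserved side $\Gamma_2^0$.
\end{itemize}
The right-hand side is then much smaller than $\big(\frac{2-\alpha}{2}T-\sqrt2\big)E(0)$. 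Approximating these data by $C_0^\infty$ data, with the trace bound of Corollary \ref{08.07.C2} as in the density argument preceding Theorem \ref{08.06.T2}, produces counterexamples inside the class of the statement. Since $2\sqrt2<4<4\sqrt2$, the corrected threshold $\frac{4\sqrt2}{2-\alpha}$ is consistent with this obstruction, while the stated $\frac{2\sqrt2}{2-\alpha}$ is not.
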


Finally, let  $ (y^0,y^1)\in H_0^1(\Om;w)\ts L^2(\Om)$. Then there exist sequences
$y_h^0,y_h^1\in C_0^\infty(\Omega)$, 
such that
$\|y_h^0-y^0\|_{H_0^1(\Om;w)}\to0$,
and 
$\|y_h^1-y^1\|_{L^2(\Om)}\to0$ as $h\to\infty$.
Hence, from Corollary \ref{08.07.C2}, we obtain 
\begin{equation}\label{08.07.7}
	\left\|
	\frac{\partial y_h}{\partial \nu_A}
	\right\|_{L^2((\partial\Omega-\Gamma_N^0)\times(0,T))}
	\leq
	CT
	\left(
	\|y_h^0\|_{H_0^1(\Omega;w)}
	+
	\|y_h^1\|_{L^2(\Omega)}
	\right), 
\end{equation} 
where the positive constant $C$ is independent of $h$ and $T$, where $y_h$ is the weak solution of \eqref{01.08.1} with initial data $(y_h^0,y_h^1)$. 
Then there exists a subsequence of $\{y_h\}_{h\in\N^*}$, still denoted by itself, and $g$ in $L^2((\pt\Om-\Ga_2^0)\ts (0,T))$ such that 
\begin{equation}\label{08.07.8}
	\f{\pt y_h}{\pt \nu_A} \ra g\mbox{ weakly in } L^2((\pt\Om-\Ga_2^0)\ts (0,T)) \mbox{ as } h\ra \iy. 
\end{equation}

It is easily verified that 
\begin{equation*}
	\begin{split}
		y_h
		&\ra y \mbox{ weak star in } L^\iy (0,T; H_0^1(\Om;w)), \\
		\pt_ty_h
		&\ra \pt_ty \mbox{ weak star in } L^\iy(0,T; L^2(\Om))
	\end{split}
\end{equation*}
by \eqref{08.07.6}, where $y$ is the weak solution of \eqref{01.08.1} with initial data $(y^0,y^1)$. 
Let $\psi\in C_0^\iy(\Ga_1^{-1}\ts (0,T))$, taking $\Psi\in C^\iy(\ol Q)$ with $\Psi|_{\Ga_1^{-1}\ts (0,T)}=\psi$ and $\Psi=0$ on $\pt Q-(\Ga_1^{-1}\ts (0,T))$, multiplying $\Psi$ on the both sides of \eqref{01.08.1}, integrating on $Q$, integration by parts, we obtain
\begin{equation*}
	\begin{split}
		\iint_{\Ga_1^{-1}\ts (0,T)} \f{\pt y_h}{\pt \nu_A}\psi\df x\df t=\iint_Q (\pt_ty_h)\pt_t\Psi\df x\df t-\iint_Q A\nabla y_h\cdot \nabla \Psi\df x\df t, 
	\end{split}
\end{equation*}
and 
\begin{equation*}
	\iint_{\Ga_1^{-1}\ts (0,T)} \f{\pt y}{\pt \nu_A}\psi\df x\df t=\iint_Q (\pt_ty)\pt_t\Psi\df x\df t-\iint_Q A\nabla y\cdot \nabla \Psi\df x\df t. 
\end{equation*}
Passing $h\ra \iy$, we get 
\begin{equation*}
	\f{\pt y}{\pt \nu_A}=g \mbox{ in } L^2(\Ga_1^{-1}\ts (0,T)). 
\end{equation*}
By the same argument as above, we get 
\begin{equation*}
	\f{\pt y}{\pt \nu_A}=g \mbox{ in  } L^2((\Ga_1^1\cup \Ga_2^1)\ts (0,T)). 
\end{equation*}
For any $h,n\in\N^*$,  $y_h-y_n$ is the solution of \eqref{01.08.1} with initial date $(y_h^0-y_n^0, y_h^1-y_n^1)$, from Corollary \ref{08.07.C2}, we obtain 
\begin{equation*}
	\left\|
	\frac{\partial (y_h-y_n)}{\partial \nu_A}
	\right\|_{L^2((\partial\Omega-\Gamma_N^0)\times(0,T))}
	\leq
	CT
	\left(
	\|y_h^0-y_n^0\|_{H_0^1(\Omega;w)}
	+
	\|y_h^1-y_n^1\|_{L^2(\Omega)}
	\right). 
\end{equation*} 
Hence 
\begin{equation*}
	\f{\pt y_h}{\pt \nu_A} \ra \f{\pt y}{\pt \nu_A} \mbox{ strongly in }L^2((\pt \Om-\Ga_2^0)\ts (0,T)). 
\end{equation*}
Which together with $y_h^0\ra y^0$ and $y_h^1\ra y^1$ (i.e., $\f{1}{2}(\|y_h^0\|_{H_0^1(\Om;w)}^2+\|y_h^1\|_{L^2(\Om)}^2)\ra \f{1}{2}(\|y^0\|_{H_0^1(\Om;w)}^2+\|y^1\|_{L^2(\Om)}^2)$  shows the following theorem. 

\begin{theorem}\label{08.06.T2}
	Let $y^0\in H_0^1(\Om;w)$ and $y^1\in L^2(\Om)$. If
	$T>\frac{2\sqrt2}{2-\alpha}$,
	then the following observability inequality holds:
	\begin{equation*}
	\left(\frac{2-\alpha}{2}T-\sqrt2
	\right)
	E(0)
	\le
	\frac12
	\iint_{(\partial\Omega-\Gamma_2^0)\times(0,T)}
	(\nabla y\cdot A\nabla y)
	(x\cdot\nu)
	\df S\df t,
	\end{equation*} 
	where
	\begin{equation*} 
	E(t)
	=
	\frac12
	\int_\Omega
	\left(
	(\partial_t y)^2
	+
	\nabla y\cdot A\nabla y
	\right)
	\df x,
	\qquad
	t\in[0,T],
	\end{equation*} 
	and $y$ is the weak solution of \eqref{01.08.1} corresponding to the initial data $(y^0,y^1)$.
\end{theorem}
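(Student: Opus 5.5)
The plan is a density argument: take Theorem \ref{08.06.T1} for smooth compactly supported data and pass to the limit on both sides of the inequality.

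\emph{Step 1 (approximation).} Since $C_0^\infty(\Omega)$ is dense in $H_0^1(\Omega;w)$ by definition, and dense in $L^2(\Omega)$, choose $y_h^0,y_h^1\in C_0^\infty(\Omega)$ with $y_h^0\to y^0$ in $H_0^1(\Omega;w)$ and $y_h^1\to y^1$ in $L^2(\Omega)$. Let $y_h$ be the weak solution of \eqref{01.08.1} with these data. For each $h$, Theorem \ref{08.06.T1} gives
\begin{equation*}
\left(\frac{2-\alpha}{2}T-\sqrt2\right)E_h(0)\le \frac12\iint_{(\partial\Omega-\Gamma_2^0)\times(0,T)}(\nabla y_h\cdot A\nabla y_h)(x\cdot\nu)\,\df S\,\df t .
\end{equation*}

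\emph{Step 2 (left side).} By the choice of norm in Remark \ref{08.04.R1}, $E_h(0)=\frac12(\|y_h^0\|_{H_0^1(\Omega;w)}^2+\|y_h^1\|_{L^2(\Omega)}^2)$. This converges to $E(0)$, so the left-hand side passes to the limit.

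\emph{Step 3 (right side; the main obstacle).} On $\partial\Omega-\Gamma_2^0$ the solution vanishes, so tangential derivatives vanish. Hence $\nabla y_h$ is parallel to $\nu$, and $\nabla y_h\cdot A\nabla y_h=|\partial y_h/\partial\nu_A|^2$ there. Indeed $A$ is diagonal, and on $\Gamma_2^1$ we have $x_2^\alpha=1$. Also $x\cdot\nu$ is bounded on $\partial\Omega$. It therefore suffices to show
\begin{equation*}
\frac{\partial y_h}{\partial\nu_A}\to\frac{\partial y}{\partial\nu_A}\quad\text{strongly in } L^2((\partial\Omega-\Gamma_2^0)\times(0,T)).
\end{equation*}
This is done in two parts.

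First, for two approximants $y_h$ and $y_n$, linearity and Corollary \ref{08.07.C2} applied to $y_h-y_n$ bound the normal-derivative difference by $CT(\|y_h^0-y_n^0\|_{H_0^1(\Omega;w)}+\|y_h^1-y_n^1\|_{L^2})$. So $\{\partial y_h/\partial\nu_A\}$ is Cauchy and converges strongly to some $g$.

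Second, one must identify $g$ with the conormal derivative of $y$. This is delicate because $y$ is only an energy solution, so its trace is defined through the boundary term of a weak Green formula. Fix $\psi\in C_0^\infty$ on each flat piece $\Gamma_1^{\pm1}\times(0,T)$ or $\Gamma_2^1\times(0,T)$. Extend it to $\Psi\in C^\infty(\overline Q)$ vanishing on the rest of $\partial Q$. Testing the equation with $\Psi$ gives $\iint \frac{\partial y_h}{\partial\nu_A}\psi=\iint_Q\partial_ty_h\,\partial_t\Psi-\iint_Q A\nabla y_h\cdot\nabla\Psi$. The right-hand side depends continuously on the data, because energy conservation \eqref{08.07.6} yields $y_h\to y$ in $C([0,T];H_0^1(\Omega;w))$ and $\partial_ty_h\to\partial_ty$ in $C([0,T];L^2)$. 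Hence $g$ coincides with the (weakly defined) $\partial y/\partial\nu_A$ on each piece.

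\emph{Step 4 (conclusion).} Strong $L^2$ convergence makes the quadratic boundary integral converge. Letting $h\to\infty$ in the inequality of Step 1 yields the claim for every $T>\frac{2\sqrt2}{2-\alpha}$.
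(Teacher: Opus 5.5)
Your proposal is correct and follows essentially the same route as the paper: approximate by $C_0^\infty(\Omega)$ data, apply Theorem \ref{08.06.T1}, use Corollary \ref{08.07.C2} on the differences $y_h-y_n$ to get strong convergence of the conormal derivatives, and identify the limit with $\partial y/\partial\nu_A$ by testing against an extension $\Psi$. The differences are only cosmetic. The paper first extracts a weak limit $g$ and identifies it using weak-star convergence of $(y_h,\partial_t y_h)$ before upgrading to strong convergence, whereas you prove the Cauchy property first and use the stronger (and valid, via energy conservation for $y_h-y$) convergence $y_h\to y$ in $C([0,T];H_0^1(\Omega;w))$; you also state explicitly the identity $\nabla y_h\cdot A\nabla y_h=|\partial y_h/\partial\nu_A|^2$ on $\partial\Omega-\Gamma_2^0$, which the paper uses only implicitly.
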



\begin{thebibliography}{}  
	
	
	
	
	
	\bibitem{Alabau}
	F. Alabau-Boussouira, P. Cannarsa and G. Fragnelli, Carleman estimates for degenerate parabolic operators with applications to null controllability, {\it J. Evol. Equ.}, 6(2006), 161-204.
	
	\bibitem{Alabau1}
	F. Alabau-Boussouira, P. Cannarsa and G. Leugering, Control and stabilization of degenerate wave equations, {\it SIAM J. Control Optim.}, 55 (2017),  2052–2087. 
	
	\bibitem{Akil}
	M. Akil, G. Fragnelli and  S. Ismail, Boundary observability and null-controllability for non-autonomous degenerate hyperbolic equations via energy estimates, {\it J. Evol. Equ.}, 25(2025),  https://doi.org/10.1007/s00028-025-01122-5.  
	
	
	\bibitem{Bai}
	J. Bai and S. Chai, Exact controllability of wave equations with interior degeneracy and
	one-sided boundary control, {\it J. Syst. Sci. Complex.}, 36(2023), 656-671.
	
	\bibitem{Buffe}
	R. Buffe, K.D. Phung and A. Slimani, An optimal spectral inequality for degenerate operators, {\it SIAM J. Control Optim.}, 62(2024), 2506-2528. 
	
	
	
	
	
	\bibitem{Buttazzo}
	G. Buttazzo and P. Guasoni, Shape optimization problems over classes of convex domains,
	{\it J. Convex Anal.}, 4(1997),  343-351.
	
	\bibitem{Cannarsa}
	P. Cannarsa, P. Martinez and C. Urbani, Bilinear Control of a Degenerate Hyperbolic Equation, {\it SIAM J. Control Optim.}, 55(2023), https://doi.org/10.1137/22M148745. 
	
	
	\bibitem{Cannarsa1}
	P. Cannarsa, P. Martinez and J. Vancostenoble, The cost of controlling weakly degenerate parabolic equations by boundary control, {\it Math. Control Relat. Fields}, 7(2017), 171-211. 
	
	 
	
	\bibitem{Chenais}
	D. Chenais, On the existence of a solution in a domain identification problem, J. Math. Anal.
	Appl., 52 (1975), pp. 189–219.
	
	    
	\bibitem{Dunford}
	N. Dunford and J.T. Schwartz, {\it Linear operators, Part II: Spectral Theory}, Interscience Publishers, New York, London, 1963. 
	
	  \bibitem{Evans}
	 L.C. Evans, {\it Partial Differential Equations}, American Mathematical Society, New York, 2010. 
	
	
	\bibitem{Fabes}
	E.B. Fabes, C.E. Kenig, and R.P. Serapioni, The local regularity of solutions of degenerate elliptic equations, {\it Comm. Partial Differential Equations}, 7(1982), 77-116.
	
	\bibitem{Fragnelli}
	G. Fragnelli, D. Mugnai and  A. Sbai, Boundary Controllability for Degenerate/Singular Hyperbolic Equations in Nondivergence Form with Drift, {\it Appl. Math. Optim.}, 91(2025), https://doi.org/10.1007/s00245-025-10236-8.
	 
	\bibitem{GC}
	J. Garcia-Cuerva and J.R. de Francia, {\it Weighted Norm Inequalities and Related Topics}, North-Holland Publishing Co., Amsterdam, 1985.
	
	\bibitem{Greco}
	L. Greco, An approximation theorem for the $\Ga^-$-convergence of degenerate quadratic functionals, {\it
		Riv. Mat. Pura Appl.}, 7(1990), 53-80.
		
	\bibitem{Gueye}
	M. Gueye, Exact boundary controllability of $1$-d parabolic and hyperbolic degenerate equations, {\it SIAM J. Control Optim.}, 52(2014), 2037-2054. 
	
	\bibitem{Guo2}
	B.Z. Guo and D. Yang, Some compact classes of open sets under Hausdorﬀ distance and
	application to shape optimization, {\it SIAM J. Control Optim.}, 50(2012), 222-242.
	
	\bibitem{Guo1}
	B.Z. Guo and D.H. Yang, On convergence of boundary Hausdorff measure and application to a
	boundary shape optimization problem, {\it SIAM J. Control Optim.}, 51(2013), 253-272.
	
	\bibitem{Guo}
	B.-Z. Guo, D.-H. Yang and J. Zhong, A Shape Design Approximation for Degenerate
	Partial Differential Equations with Degenerate Part Boundary and Application, {\it J. Optim. Theory Appl.}, 209(2026),  https://doi.org/10.1007/s10957-026-03011-4.
	
	
	
	\bibitem{He}
	Y. He and B.Z. Guo, The existence of optimal solution for a shape optimization problem on
	starlike domain, {\it J. Optim. Theory Appl.}, 152 (2012), 21-30.
	
	
	\bibitem{Heinonen}
	J. Heinonen and T. Kilpenl\"ainen and O. Martio, {\it Nonlinear Potential Theory of Degenerate Elliptic Equations}, Oxford University Press, New York, 1993.
	
	\bibitem{Henrot}
	A. Henrot, {\it Extremum Problems for Eigenvalues of Elliptic Operators}, Birkh\"auser, Basel, 2006.
	
	\bibitem{Komornik}
	V. Komornik, {\it Exact Controllability and Stabilization:  The Multiplier Method}, Wiley-Masson, Paris, 1994.
	
	\bibitem{Lasiecka}
	I. Lasiecka and R. Triggiani, Riccati equations for hyperbolic partial differential equations with $L^2(0,T; L^2(\Ga))$ Dirichlet boundary terms, {\it SIAM J. Control Optim.}, 24(1986),
	884–925.
	
	\bibitem{Lasiecka1}
	I. Lasiecka, R. Triggiani, and P.-F. Yao, An observability estimate in $L^2(\Om)\ts H^{-1}(\Om)$ 
	for second-order hyperbolic equations with variable coefficients, in {\it Control of Distributed
	Parameter and Stochastic Systems}, Kluwer Academic Publishers, Boston, MA, 1999, 71–
	78.
	
	\bibitem{Lions}
	J.L. Lions, Exact controllability, stabilizability and perturbations for distributed systems, {\it SIAM Rev.}, 30(1988), 1-68.
	
	\bibitem{Lu}
	Q. L\"u and X. Zhang, {\it Exact Controllability for a Refined Stochastic Wave Equation}, In: Mathematical Control Theory for Stochastic Partial Differential Equations. Probability Theory and Stochastic Modelling, vol 101. Springer, Cham.
	
	\bibitem{Privat}
	Y. Privat, E. Tr\'elat, and E. Zuazua, Optimal observability of the multi-dimensional wave and Schr\"odinger equations in quantum ergodic domains, {\it J. Eur. Math. Soc.}, 18(2016), 1043-1111.
	 
	\bibitem{Tiba}
	C.M. Murea and D. Tiba, Topological optimization via cost penalization,  {\it Topol. Method Nonl. An.}, 54(2019), 1023-1050.
	  
	 \bibitem{Weiss}
	 M. Tucsnak and G. Weiss, {\it Observation and Control for Operator Semigroups}, Birkh\"auser, Basel, 2009.
	
	
	\bibitem{Wang}
	G. Wang, L. Wang, and D. Yang, Shape optimization of an elliptic equation in an exterior
	domain, {\it SIAM J. Control Optim.}, 45 (2006), pp. 532–547.
	
	 
	
	\bibitem{Yang}
	D.-H. Yang, B.-Z. Guo, Null controllability of a class of degenerate parabolic equations subject to Dirichlet boundary conditions, {\it Syst. Control Lett.}, 209(2026), 106361.
	
	\bibitem{Yang6}
	D.H. Yang and B.-Z. Guo, Approximation of Degenerate Hyperbolic Equations
	with Interior Degeneracy and Applications to
	Controllability,  https://arxiv.org/pdf/2605.09069.
	
	\bibitem{Yang4}
	D.H. Yang, W. Wu, B.Z. Guo, and S. Chai, On exact controllability for a class of 2-D
	Grushin hyperbolic equations, {\it J. Differ. Equ.}, 448(2025), 113710.
	
	\bibitem{Yang5}
	D. Yang and J. Zhong, Observability inequality of backward stochastic heat equations
	for measurable sets and its applications, {\it SIAM J. Control Optim.}, 54 (2016), 1157–1175.
	
	\bibitem{Yang1}
	D.H. Yang  and J. Zhong, Non-homogeneous boundary value problems for second-order degenerate hyperbolic equations and their applications, https://arxiv.org/abs/2602.07271.
	
	 \bibitem{Yang2}
	 D.H. Yang and J. Zhong, Hidden Boundary Trace Regularity and an
	 Observability Estimate with Interior Remainder for
	 Boundary-Degenerate Hyperbolic Equations,   https://arxiv.org/pdf/2605.01254. 
	 
	 \bibitem{Yang3}
	 D.H. Yang and J. Zhong, Shape-Design Approximation for a Class of Degenerate
	 Hyperbolic Equations with a Degenerate Boundary
	 Point and Its Application to Observability,  https://arxiv.org/pdf/2603.10428. 
	
	\bibitem{Yao}
	P.-F. Yao, On the observability inequalities for exact controllability of wave equations
	with variable coefficients, {\it SIAM J. Control Optim.}, 37(1999), 1568–1599.
	
	\bibitem{Zhang}
	M. Zhang and H. Gao, Persistent regional null controllability of some degenerate wave
	equations, {\it Math. Methods Appl. Sci.}, 40 (2017), 5821–5830.
	
	\bibitem{Zuazua}
	E. Zuazua, Exact Controllability and Stabilization of the Wave Equation, Springer,
	Cham, 2024.
	 
	
	 
\end{thebibliography}
\end{document}